\setlist[description]{listparindent=\parindent}
\crefname{equation}{}{}
\Crefname{equation}{Equation}{Equations}
\theoremstyle{plain}
\newtheorem{theorem}{Theorem}[section]
\newtheorem{lemma}[theorem]{Lemma}
\newtheorem{proposition}[theorem]{Proposition}
\newtheorem{assumption}[theorem]{Assumption}
\theoremstyle{definition}
\theoremstyle{remark}
\newtheorem{remark}{Remark}
\newcommand{\mmt}{\mathsf{p}} 
\newcommand{\quantile}{\mathsf{q}} 
\newcommand{\qmin}{q_{\mathrm{min}}} 
\newcommand{\qmax}{q_{\mathrm{max}}} 
\newcommand{\Mmax}{M_{\mathrm{max}}} 
\newcommand{\normadj}{normalized adjacency\xspace}
\newcommand{\Normadj}{Normalized adjacency\xspace}
\newcommand{\NormAdj}{Normalized Adjacency\xspace}
\newcommand{\ellP}[2][\mmt]{\left\| #2 \right\|_{\mathsf{L}_{#1}}}
\newcommand{\opnorm}[1]{\left\| #1 \right\|_{2}}
\newcommand{\twoinfnorm}[1]{\left\| #1 \right\|_{2,\infty}}
\newcommand{\subG}[1]{\left\| #1 \right\|_{\psi_{2}}}
\newcommand{\subE}[1]{\left\| #1 \right\|_{\psi_{1}}}
\newcommand{\frob}{\operatorname{F}} 
\newcommand{\op}{2} 
\newcommand{\IEEEPARstart}[2]{#1#2}
\newenvironment{IEEEkeywords}{\noindent\textbf{Keywords:}}{}
\newcommand{\appendices}{\appendix}
\newcommand{\blfootnote}[1]{
    \begingroup
    \renewcommand\thefootnote{}\footnote{#1}%
    \addtocounter{footnote}{-1}%
    \endgroup
}
\begin{document}

\title{Network Signflip Parallel Analysis\\for Selecting the Embedding Dimension}

\author{%
    David Hong\thanks{Department of Electrical and Computer Engineering, University of Delaware. Email: hong@udel.edu.}
    \and
    Joshua Cape\thanks{Department of Statistics, University of Wisconsin--Madison. Email: jrcape@wisc.edu.}
}

\maketitle

\blfootnote{%
    Equal contribution by the authors.
    D. Hong was supported in part by NSF Mathematical Sciences Postdoctoral Research Fellowship DMS 2103353.
    J. Cape was supported in part by the National Science Foundation under grant DMS 2413552 and by the University of Wisconsin--Madison, Office of the Vice Chancellor for Research and Graduate Education, with funding from the Wisconsin Alumni Research Foundation.
}
\blfootnote{%
  \textcopyright\, 2026 IEEE.
  Personal use of this material is permitted.
  Permission from IEEE must be obtained for all other uses, in any current or future media,
  including reprinting/republishing this material for advertising or promotional purposes,
  creating new collective works, for resale or redistribution to servers or lists,
  or reuse of any copyrighted component of this work in other works.
}

\begin{abstract}
    This paper investigates the problem of selecting the embedding dimension for large heterogeneous networks that have weakly distinguishable community structure.
    For a broad family of embeddings based on normalized adjacency matrices, we introduce a novel spectral method that compares the eigenvalues of the normalized adjacency matrix to those obtained after randomly signflipping its entries.
    The proposed method, called \emph{network signflip parallel analysis (NetFlipPA)}, is interpretable, simple to implement, data driven, and does not require users to carefully tune parameters.
    For large random graphs arising from degree-corrected stochastic blockmodels with weakly distinguishable community structure (and consequently, non-diverging eigenvalues), NetFlipPA provably recovers the spectral noise floor (i.e., the operator norm of the noise component of the normalized adjacency matrix).
    NetFlipPA thus provides a statistically rigorous randomization-based method for selecting the embedding dimension by keeping the eigenvalues whose magnitudes rise above the recovered spectral noise floor.
    Compared to traditional cutoff-based methods, the data-driven threshold used in NetFlipPA is provably effective under milder assumptions on the node degree heterogeneity and the number of node communities.
    Our main results rely on careful non-asymptotic perturbation analysis and leverage recent progress on local laws for nonhomogeneous Wigner-type random matrices.
\end{abstract}

\begin{IEEEkeywords}
    Spectral methods,
    network analysis,
    random graphs,
    rank estimation.
\end{IEEEkeywords}

\section{Introduction}
\label{sec:introduction}

\IEEEPARstart{N}{etworks} and graph-structured data convey pairwise interactions (i.e., edges or links) between entities of interest (i.e., vertices or nodes) and are widespread in the modern world, appearing in domains across science and engineering \cite{newman2018networks}.
Examples include sender-receiver information networks in computer systems \cite{peterson2011computer}, peer-peer friendship and enmity networks in sociology \cite{snijders2011statistical}, country-country trade networks in economics \cite{graham2020econometric}, protein-protein interaction networks in biology \cite{junker2007biological}, and region-region co-activation networks in neuroimaging \cite{fornito2016fundamentals}.
In practice, the observable interactions or edges in these settings are often noisy and possess measurement error.
For example, noisy correlation-based associations may be thresholded to construct binary-valued brain networks in connectomics \cite{bordier2017graph,theis2023threshold}.
Further, node degrees are often heterogeneous, with nodes differing in their overall level of connectivity.
To analyze these noisy heterogeneous networks, a popular approach is to construct a low-dimensional Euclidean representation of the nodes, i.e., an embedding, that captures the salient connectivity structure of the network while reducing the influence of noise and degree heterogeneity.
When there is significant noise, such as when communities present in the network are weakly distinguishable, the choice of dimension for the embedding is crucial.
Selecting too small an embedding dimension risks missing informative connectivity structure in the network, whereas selecting too large an embedding dimension risks retaining uninformative variability due to noise.
This paper tackles the problem of selecting the embedding dimension in these challenging settings.

A common approach to obtaining embeddings is to use the eigenvectors and eigenvalues of a matrix-valued representation of the network that reflect salient graph-theoretic properties encompassing connectivity and low-rank structure \cite{chung1997spectral}.
Numerous matrix-valued representations have been studied in the literature, notably the adjacency matrix \cite{biggs1974algebraic}, graph Laplacians \cite{chung1997spectral}, regularizations and normalizations thereof \cite{qin2013regularized,le2017concentration,sarkar2015normalization}, and the Bethe--Hessian matrix or deformed Laplacian \cite{saade2014spectral}.
The relative strengths and weaknesses of these matrices continue to be studied under various scenarios, both theoretically (e.g., see the references above) and empirically (e.g., see \cite{priebe2019two}), though this is traditionally done on a case-by-case basis.
In this paper, we consider how to select the dimension for embeddings derived from a broad class of matrix-valued representations.

Random graph models, in particular stochastic blockmodels \cite{holland1983stochastic,anderson1992building} and their generalizations, provide a natural environment in which to rigorously study this task due to their ground-truth low-rank generative structure.
Indeed, there has been substantial progress on principled methods for analyzing these models in recent years.
Notably, significant effort has been devoted to estimating the community (i.e., block) memberships of nodes \cite{sussman2012consistent,hajek2016achieving,su2020strong,zhang2024fundamental}, estimating the number of node communities \cite{cerqueira2020estimation,jin2023optimal,hwang2024estimation}, and testing for various node properties \cite{fan2022simple,fan2022simplerc:arxiv:v1,du2023hypothesis}.
However, much of the existing literature has focused on regimes where network community structure is, loosely speaking, moderately or strongly distinguishable.
Moreover, while estimating the number of node communities is closely related to our problem of selecting the embedding dimension, these problems are in fact distinct since the best embedding dimension is not necessarily the same as the number of communities.
We illustrate this distinction in \cref{sec:netflipPA} (see \cref{fig:netflippa}) and elaborate more on the broader relationships to related works in \cref{sec:literature}.

Selecting the embedding dimension for networks with underlying low-dimensional structure is closely related to the classical problems of estimating the number of components in principal component analysis and estimating the number of factors in factor analysis.
A widely used approach for these problems is parallel analysis \cite{horn1965,buja1992rop}, wherein one generates noise-like matrices (e.g., by permuting the entries of the data matrix to ``destroy'' the signal structure) and then selects the principal components that ``rise above'' their noise-like analogues.
Methods in the style of parallel analysis have had a long history of successful use; see for example the discussion in \cite{dobriban2017permutation}.
Moreover, they have received recent attention in the context of heterogeneous noise and weak-signal regimes \cite{hong2020stn:arxiv:v4}, which correspond to weakly distinguishable community structure in the context of networks.
That said, parallel analysis methods have remained relatively unexplored in statistical network analysis.
Indeed, randomization-based selection methods in general have received comparatively less attention in the network literature to date, possibly because it is not always clear how to properly account for the nature of pairwise interactions.
In this paper, we develop a novel network-centric parallel analysis procedure and analyze its performance in the challenging setting of large heterogeneous networks that have weakly distinguishable community structure.

The main contributions of this paper are summarized as follows.

\begin{enumerate}
    \item We propose \emph{network signflip parallel analysis} (NetFlipPA, \cref{alg:netflippa}), a spectral method for selecting the embedding dimension for large heterogeneous networks, namely the number of positive and negative network eigenvalues and corresponding eigenvectors to use for the embedding.
    Our methodology synthesizes and builds on recent, previously separate developments in the study of rank estimation under heteroscedasticity \cite{hong2020stn:arxiv:v4} and the study of embeddings via \normadj matrices \cite{ali2017improved}.
    Our proposed method applies to a broad class of \normadj matrices and is based on comparing their eigenvalues to the operator norms obtained after their entries have undergone multiple trials of symmetric signflipping via multiplication with independent Rademacher random variables.
    Notably, the method is interpretable, simple to implement, and does not require users to carefully tune parameters.

    \item We show that the proposed NetFlipPA provably recovers the spectral noise floor in the challenging setting of large degree-corrected stochastic blockmodel (DCSBM) graphs with weakly distinguishable community structure (\cref{thm:DCSBM:noise:recovery}).
    The spectral noise floor, namely the operator norm of the noise component of the \normadj matrix, is the point below which network eigenvalues are likely to have very noisy corresponding eigenvectors that would introduce uninformative variability into the embedding.
    Thus, by recovering the spectral noise floor, NetFlipPA provides a rigorous method for selecting the embedding dimension.

    \item To prove our main results, we undertake a detailed investigation of the high-dimensional concentration properties of certain random matrices that arise in DCSBM models before and after signflipping.
    Specifically, \cref{thm:DCSBM:approx:L:signflip:decay} establishes a quantitative error bound for a low-rank signal-plus-noise approximation to the \normadj matrix that holds both with and without signflipping.
    \Cref{thm:DCSBM:signal:signflip:destroy} quantifies the decay of the signal component after signflipping, whereas \cref{thm:DCSBM:noise:signflip:preserve} quantifies the preservation of the noise component after signflipping.
    Together, these three theorems finally lead to \cref{thm:DCSBM:noise:recovery} which establishes the performance of NetFlipPA.
    Proving \cref{thm:DCSBM:approx:L:signflip:decay,thm:DCSBM:signal:signflip:destroy,thm:DCSBM:noise:signflip:preserve} involves careful non-asymptotic analysis of the moments of various random matrices (going beyond the existing asymptotic analyses in \cite{ali2017improved,hong2020stn:arxiv:v4})
    and leverages recent progress on local laws for nonhomogeneous Wigner-type random matrices \cite{ajanki2016ufg,ajanki2019qve}.
\end{enumerate}

The remainder of this paper is organized as follows.
\Cref{sec:notation} establishes notation and mathematical preliminaries.
\Cref{sec:netflipPA} introduces \emph{network signflip parallel analysis (NetFlipPA)}, our proposed methodology for selecting the embedding dimension in large heterogeneous networks.
\Cref{sec:numerical} demonstrates the performance of NetFlipPA with a couple of illustrative numerical examples,
and \cref{sec:polblogs} applies it to a real data example.
\Cref{sec:theory} presents our main theoretical results, namely non-asymptotic properties and performance guarantees for NetFlipPA when applied to degree-corrected stochastic blockmodel graphs with weakly distinguishable community structure (and hence with non-diverging eigenvalues).
\Cref{sec:literature} provides in-depth discussion of the existing related literature and context for the theory and methods in the present paper.
\Cref{sec:conclusion} offers concluding discussion and mentions opportunities for future work.
Proofs and supporting lemmas are collected in the appendix.

\section{Notation and Mathematical Preliminaries}
\label{sec:notation}

In this paper, scalars are denoted by $x$ or by $x_{i}$ when associated with the index $i$.
Vectors are treated as columns and are denoted entrywise by $\bmx \coloneqq (x_{1}, \dots, x_{n})^{\prime}$, where $\phantom{}^{\prime}$ denotes the transpose operation.
We let $(\bmx_{i})_{j}$ denote the $j$-th entry of the vector $\bmx_{i}$, and we let both $A_{ij}$ and $(\bmA)_{ij}$ denote the $(i,j)$-th entry of the matrix $\bmA$.
Constants are typically denoted by $c$ or $C$ and may change from line to line unless specified otherwise.
Constants that depend on other quantities are adorned with superscripts or subscripts when relevant, for example, when writing $|x_{ij}| \le C_{i}$.
Throughout, $\bbE(\cdot)$ denotes taking an expectation.

Standard little-o and big-O notation are written as $o(\cdot)$ and $O(\cdot)$, respectively.
Likewise, $f(n) \lesssim g(n)$ denotes that there exist $C,N > 0$ so that for all $n \geq N$ it holds that $|f(n)| \leq C g(n)$, while $f(n) \gtrsim g(n)$ denotes that $g(n) \lesssim f(n)$, and $f(n) \asymp g(n)$ denotes that $f(n) \lesssim g(n)$ and $f(n) \gtrsim g(n)$.

We also adopt the following common notation.
The Frobenius norm is given by $\|\bmA\|_{\frob} \coloneqq (\sum_{i,j} A_{ij}^{2})^{1/2}$.
The operator (i.e., spectral) norm is given by $\|\bmA\|_{\op} \coloneqq \sup_{\|\bmx\|_{2} = 1} \|\bmA\bmx\|_{2}$.
The induced $(2,\infty)$ norm (i.e., the maximum Euclidean row norm) is given by
$
\|\bmA\|_{2,\infty}
\coloneqq
\sup_{\|\bmx\|_{2} = 1} \|\bmA\bmx\|_{\infty}
\equiv
\max_{i} ( \sum_{j} A_{ij}^{2} )^{1/2}
.
$
The maximum absolute entry norm is given by $\|\bmA\|_{\max} \coloneqq \max_{i,j} |A_{ij}|$.
The Hadamard (entrywise) product between $\bmA$ and $\bmB$ is given elementwise by $(\bmA \circ \bmB)_{ij} \coloneqq A_{ij} B_{ij}$.
Given a positive integer $\mmt \ge 1$ and random variable $X$ satisfying $\bbE[|X|^{\mmt}] < \infty$, the $\mmt$-th moment norm is given by $\ellP{X} \coloneqq \bbE[|X|^{\mmt}]^{1/\mmt}$.

We adopt the convention that $0^{p} = 0$ for any $p \in \mathbb{R}$.
This convention accommodates the possibility of encountering isolated nodes in the adjacency matrix $\bmA$ in which case the diagonal entries of $\bmD^{-\alpha} \coloneqq \diag(\bmA \bm1)^{-\alpha}$ are not otherwise well defined.

\section{Network Signflip Parallel Analysis (NetFlipPA)}
\label{sec:netflipPA}

This section introduces \emph{network signflip parallel analysis (NetFlipPA)}, our proposed methodology for selecting the embedding dimension in large heterogeneous networks.
Specifically, we consider embeddings derived from \normadj matrices of the form
\begin{equation}
    \label{eq:normadj}
    \bmL_{\alpha}
    \coloneqq
    (2m)^{\alpha}\tfrac{1}{\sqrt{n}}\bmD^{-\alpha}\left(\bmA - \tfrac{1}{2m}\bmd\bmd'\right)\bmD^{-\alpha}
    \quad
\end{equation}
where
\begin{equation*}
    \bmd
    \coloneqq
    \bmA \bm1,
    \quad
    \bmD
    \coloneqq
    \diag(\bmd),
    \quad
    m
    \coloneqq
    \tfrac{1}{2}\bmd'\bm1
    .
\end{equation*}
This family is defined by the single hyper-parameter $\alpha \in \bbR$, which controls the amount of degree-correction.
As highlighted in \cite{ali2017improved}, this setting encompasses many popularly used matrix-valued network representations.
Concretely, $\bmL_{0}$ corresponds to the modularity matrix \cite{newman2006modularity,jin2015fast}, $\bmL_{1/2}$ corresponds to a modularity-type centered symmetric normalized Laplacian matrix \cite{qin2013regularized,chung1997spectral}, and $\bmL_{1}$ corresponds to a graph Laplacian with aggressive degree normalization \cite{coja-oghlan2009finding,gulikers2017spectral}.

NetFlipPA is a randomization-based method for selecting the embedding dimension from the \normadj matrix $\bmL_{\alpha}$.
The method is inspired by parallel analysis approaches to rank selection in principal component analysis and can be viewed as a network-centric version of parallel analysis.
NetFlipPA works by randomly signflipping the entries of the \normadj matrix to generate noise-like counterparts that can be used to estimate the spectral noise floor.
The embedding dimension is then chosen by selecting all the network eigenvalues whose magnitudes rise above the estimated spectral noise floor.

\begin{figure*}[!htbp]
    \centering
    \includegraphics[scale=0.6]{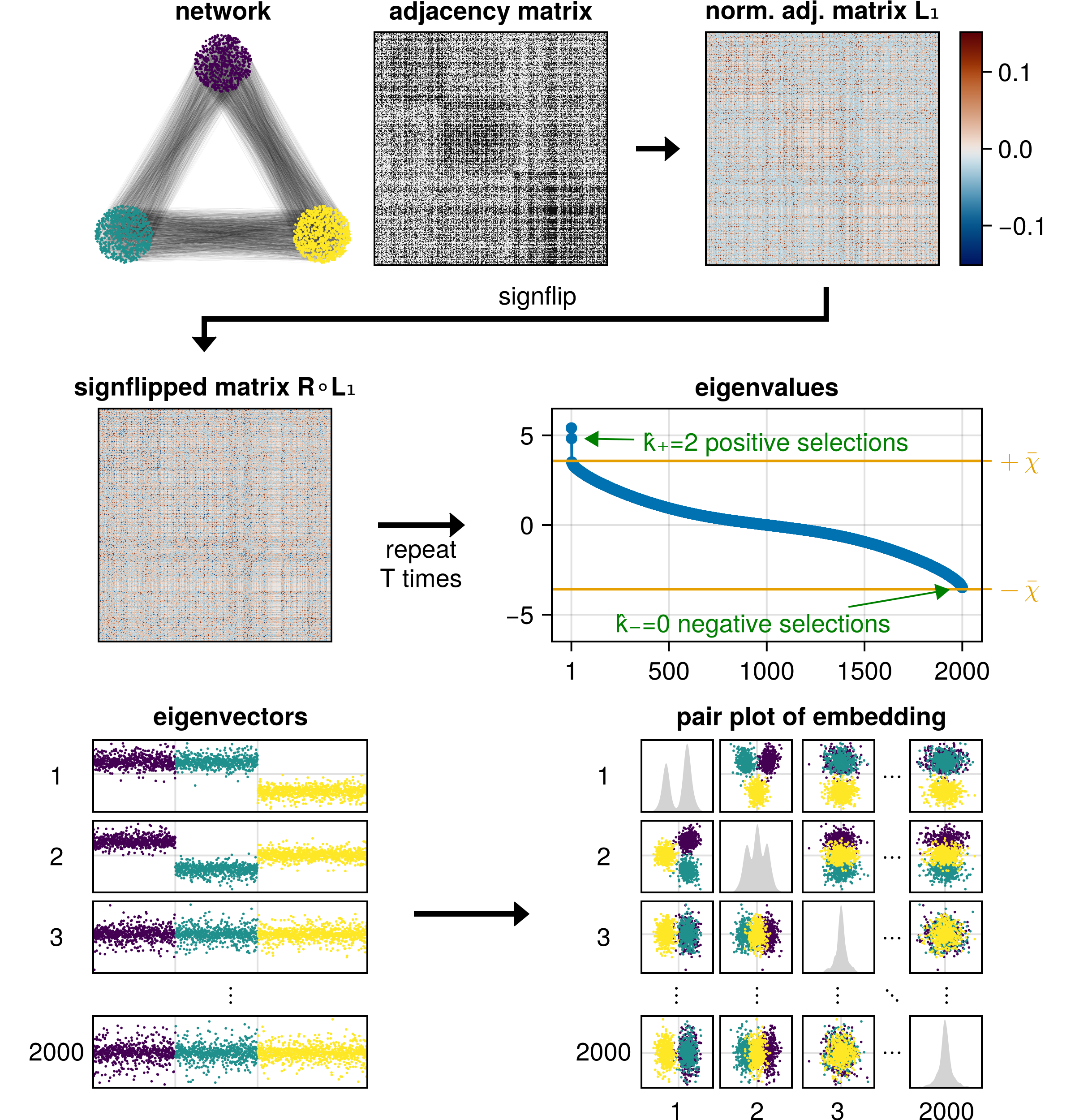}
    \caption{Illustration of network signflip parallel analysis (NetFlipPA). The positive and negative cutoffs $\pm \bar{\chi}$ here were computed from $T = 100$ trials with a quantile of $\quantile = 1$.}
    \label{fig:netflippa}
\end{figure*}

\Cref{fig:netflippa} illustrates the idea of NetFlipPA for a network with three communities colored purple, green, and yellow, respectively.
Nodes belonging to the same community are generally more connected than nodes belonging to different communities, as can be seen by the assortative block structure in the adjacency matrix and in the \normadj matrix $\bmL_{1}$.
Randomly signflipping the entries of the \normadj matrix produces a signflipped matrix $\bmR \circ \bmL_{1}$ with similar characteristics to the original \normadj matrix but without the block structure; it is a noise-like counterpart.
Repeating this procedure $T$ times and collecting the operator norms of these signflipped matrices yields an empirical estimate of the distribution for the spectral noise floor that can then be compared with the eigenvalues of the original \normadj matrix to select the embedding dimension.
Specifically, we select the positive and negative eigenvalues of the \normadj matrix whose magnitudes rise above the estimated spectral noise floor, given by a user-specified quantile of the empirical distribution for the signflipped operator norms.
For this example, we obtain an embedding dimension of two positive eigenvalues and zero negative eigenvalues.
Examining the extreme eigenvectors (with values colored according to the communities) reveals that indeed only the first two eigenvectors reflect the community structure; the third eigenvector is uninformative, as is the final eigenvector.
The corresponding pair plots show that an embedding with just the first two eigenvectors captures the community structure, whereas the other eigenvectors merely add noise.
NetFlipPA correctly estimates the spectral noise floor here and consequently correctly selects an embedding dimension of two positive eigenvalues and zero negative eigenvalues.%
\footnote{%
    In this example, observe that the correct embedding dimension is not equal to the number of communities.
    This is a subtle but important distinction.
    The correct embedding dimension reflects how many eigenvectors (corresponding to both positive and negative eigenvalues) are anticipated to reflect the community structure, which may or may not equal the number of communities in the network.}
\Cref{alg:netflippa} provides a complete description of the NetFlipPA procedure.

\begin{algorithm2e}[h]
    \caption{Network Signflip Parallel Analysis (NetFlipPA)}
    \label{alg:netflippa}
    \SetKwInOut{Input}{Input}
    \SetKwInOut{Output}{Output}

    \Input{\Normadj matrix $\bmL \equiv \bmL_{\alpha} \in \bbR^{n \times n}$,
        quantile $\quantile \in [0,1]$,
        number of trials $T$.}
    \Output{Selected embedding dimension $\htk_+$ and $\htk_-$ for positive and negative eigenvalues, respectively.}
    \For{$t = 1$ \KwTo $T$}{
        Randomly signflip the entries of $\bmL$
        with symmetric signflips:
        \begin{equation*}
            \htL^{(t)}_{ij} = \htL^{(t)}_{ji}
            \overset{\text{ind}}{\sim}
            \begin{cases}
                +L_{ij} , & \text{with probability $1/2$} , \\
                -L_{ij} , & \text{with probability $1/2$} , \\
            \end{cases}
        \end{equation*}
        i.e., $\bhtL^{(t)} = \bmR^{(t)} \circ \bmL$
        where $\bmR^{(t)} \in \{-1,+1\}^{n \times n}$
        is a Wigner matrix with Rademacher entries\;
        \label{alg:netflippa:signflip}

        $\chi^{(t)} \gets \|\bhtL^{(t)}\|_{\op}$, i.e., compute the operator norm of the signflipped matrix\;
    }
    $\brchi \gets \text{$\quantile$-quantile of } \big(\chi^{(1)},\dots,\chi^{(T)}\big)$\;
    $\lambda_1,\dots,\lambda_n \gets \text{eigenvalues of $\bmL$}$\;
    $\htk_+ \gets \#\{k : \lambda_k > +\brchi\}$
    and
    $\htk_- \gets \#\{k : \lambda_k < -\brchi\}$,
    i.e., select the number of eigenvalues exceeding $\brchi$ in magnitude\;
    \label{alg:netflippa:select}
\end{algorithm2e}

\Cref{alg:netflippa:signflip} in \cref{alg:netflippa} uses symmetric signflips, i.e., $\bmR^{(t)}$ is a symmetric matrix, in contrast to the nonsymmetric signflips used by FlipPA in \cite{hong2020stn:arxiv:v4}.
This preserves the symmetry of $\bmL$ in the signflipped analogue $\bhtL^{(t)}$.
Moreover, here we use eigenvalues rather than singular values and return both positive and negative selections since they are the relevant quantities for our network setting; positive and negative eigenvalues correspond to assortative and disassortative community structure, respectively.
In \cref{alg:netflippa:select}, the selection is obtained by comparing the eigenvalues of the original \normadj matrix with the operator norms of the signflipped analogues.
This is sometimes called an ``upper-edge comparison'' in the parallel analysis literature for PCA since the operator norm corresponds to the largest singular value, i.e., the upper edge of the corresponding spectral distribution.
One could also use the so-called ``pairwise comparison'' approach where each eigenvalue is successively compared against its signflipped analogue; see \cite[Section~2]{hong2020stn:arxiv:v4} for further discussion of these comparison methods.
Similarly, one could also add an optional threshold for how much the eigenvalues must rise above the spectral noise floor.
Doing so can help reduce the risk of over-selection and was needed for the theoretical analysis in \cite{hong2020stn:arxiv:v4}, but this minor modification is often not necessary in practice so we omit it here to simplify the presentation.

The choice of $T$ affects the stochasticity of the output of NetFlipPA; large choices of $T$ help make the algorithm behave more deterministically from run to run, which can be especially helpful for small networks.
The choice of $\quantile$ tunes the trade-off between over- and under-selection, and allows users to incorporate domain knowledge about the expected strength of structures in the network.
\Cref{sec:qT:experiment} illustrates the influence of these two parameters on the performance of NetFlipPA.
Notably, they typically do not have a major impact for large networks due to high-dimensional concentration phenomena.
Overall, in the absence of other external considerations, we recommend choosing a number of trials $T$ as large as can be computationally accommodated and choosing $\quantile$ between $0.95$ and $1$.

\section{Simulation Examples}
\label{sec:numerical}

To briefly illustrate NetFlipPA and highlight its performance, this section simulates a degree-corrected stochastic blockmodel (DCSBM) inspired by \cite[Figure~2]{ali2017improved}.
Specifically, we consider a network with $n=2000$ nodes and $K=3$ communities whose adjacency matrix $\bmA \in \{0,1\}^{n \times n}$ is generated as
\begin{equation*}
    A_{ij}
    = A_{ji}
    \overset{\textnormal{ind}}{\sim}
    \operatorname{Bernoulli}(q_{i} q_{j} C_{g_{i} g_{j}}),
    \qquad
    1 \leq i \leq j \leq n
    ,
\end{equation*}
where
$C_{g_{i} g_{j}} = 1 + \frac{M_{g_{i} g_{j}}}{\sqrt{n}}$ with $M_{g_{i} g_{j}} = -4$ for $g_{i} \neq g_{j}$ and $M_{g_{i} g_{i}} = 10$.
The vector of node community memberships is set to be $\bmg = [1 \cdot \bm1_{600}, 2 \cdot \bm1_{600}, 3 \cdot \bm1_{800}]$.
The entries of $\bmq \in (0,1)^{n}$ are node-specific degree parameters that induce degree heterogeneity.
Throughout, we use $T = 100$ trials and a quantile of $\quantile = 1$ in NetFlipPA.

\begin{figure*}[!htbp]
    \centering
    \includegraphics[scale=0.6]{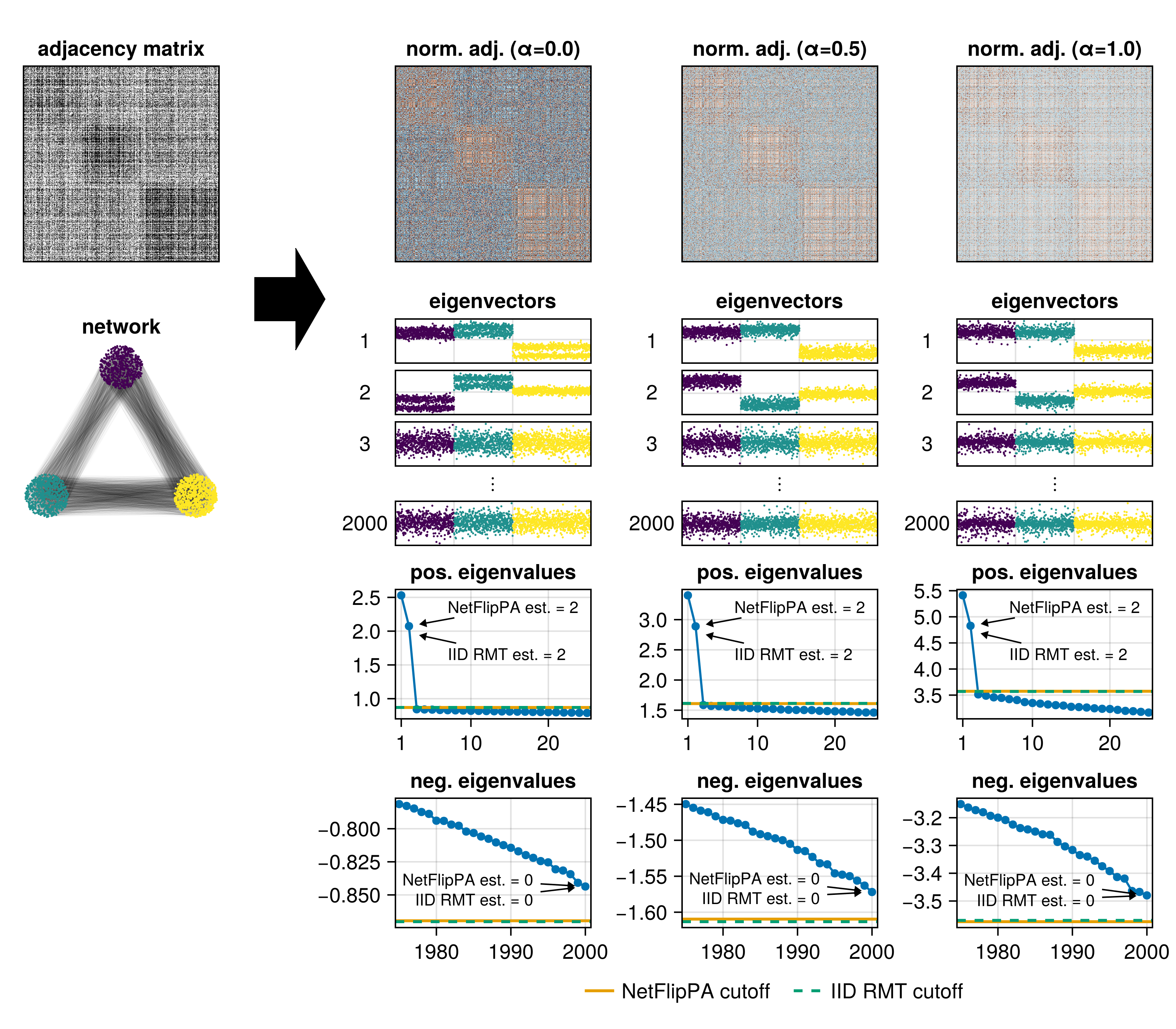}
    \caption{Embedding dimension selected by NetFlipPA versus the RMT-based cutoff from \cite[Theorem 4]{ali2017improved} where the node-specific degree parameters are i.i.d. and so satisfy the assumptions of \cite[Theorem 4]{ali2017improved}. Both methods correctly select an embedding dimension of two positive eigenvalues and zero negative eigenvalues for all choices of $\alpha \in \{0,0.5,1\}$.}
    \label{fig:numerical:1}
\end{figure*}

We first consider a setting with i.i.d. node-specific degree parameters, where $q_{i} = 0.4$ with probability $1/2$ and $q_{i} = 0.9$ otherwise for $1 \le i \le n$.
In this setting, the assumptions of \cite{ali2017improved} are satisfied and so \cite[Theorem 4]{ali2017improved} provides a method for computing a random matrix theory (RMT) estimate of the spectral noise floor that can be used as a cutoff for dimension selection.
\Cref{fig:numerical:1} shows the \normadj matrix for $\alpha \in \{0,0.5,1\}$, the corresponding eigenvectors, and the selections made by NetFlipPA and the RMT-based cutoff.
For each choice of $\alpha$, the correct embedding dimension is two positive eigenvalues and zero negative eigenvalues; only the first two eigenvectors of the \normadj matrix capture the community structure while the other eigenvectors appear uninformative.
In each case, NetFlipPA produces an estimate of the spectral noise floor for $\bmL_{\alpha}$ that closely matches the RMT estimate.
Consequently, both NetFlipPA and the RMT-based cutoff correctly select an embedding dimension of two positive eigenvalues and zero negative eigenvalues.
Indeed, as we will show in \cref{sec:theory}, NetFlipPA provably recovers the spectral noise floor asymptotically in this model and for a broad class of DCSBMs more generally.

\begin{figure*}[!htbp]
    \centering
    \includegraphics[scale=0.6]{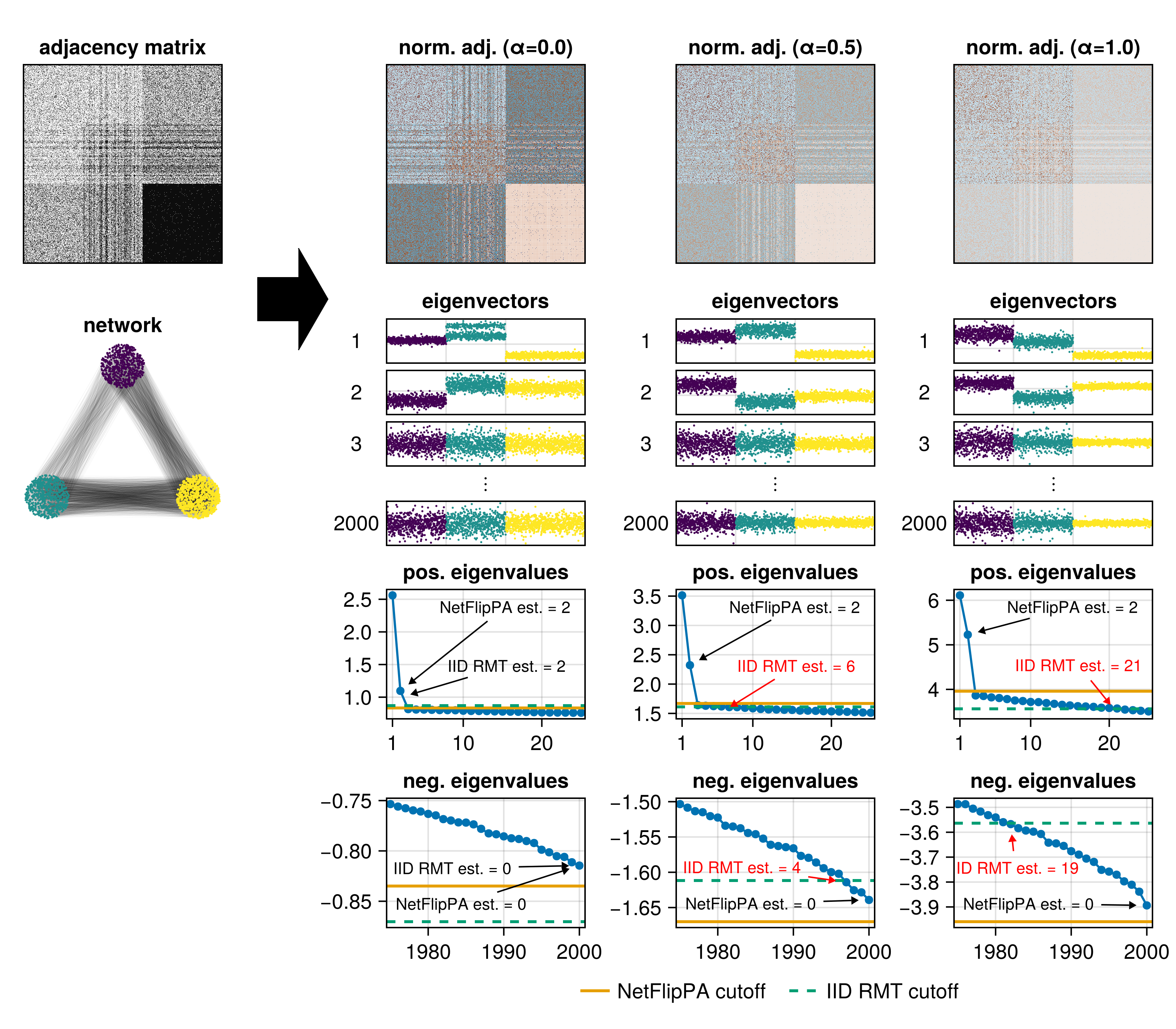}
    \caption{Embedding dimension selected by NetFlipPA versus the RMT-based cutoff from \cite[Theorem 4]{ali2017improved} where the node-specific degree parameters are not i.i.d. and so no longer satisfy the assumptions of \cite[Theorem 4]{ali2017improved}. In this case, only NetFlipPA correctly selects an embedding dimension of two positive eigenvalues and zero negative eigenvalues for all choices of $\alpha \in \{0,0.5,1\}$. The RMT-based cutoff over-selects both positive and negative eigenvalues for $\alpha \in \{0.5, 1\}$.}
    \label{fig:numerical:2}
\end{figure*}

Next, we consider a setting where the node-specific degree parameters are no longer i.i.d. but instead vary from community to community.
Such behavior can naturally arise when different communities have differing levels of interaction.
In particular, we consider the same network model as in \cref{fig:numerical:1} but where the node-specific degree parameters are now generated as
\begin{equation*}
    q_i \mid g_i = 1 \overset{\textnormal{ind}}{\sim}
    \begin{cases}
        0.4 & \text{w.p. } 1.0 \\
        0.9 & \text{w.p. } 0.0
    \end{cases}
    ,
    \qquad
    q_i \mid g_i = 2 \overset{\textnormal{ind}}{\sim}
    \begin{cases}
        0.4 & \text{w.p. } 2/3 \\
        0.9 & \text{w.p. } 1/3
    \end{cases}
    ,
    \qquad
    q_i \mid g_i = 3 \overset{\textnormal{ind}}{\sim}
    \begin{cases}
        0.4 & \text{w.p. } 0.0 \\
        0.9 & \text{w.p. } 1.0
    \end{cases}
    .
\end{equation*}
These distributions are chosen so that the expected number of nodes with $q_{i} = 0.4$ and with $q_{i} = 0.9$ match those of the simulation in \cref{fig:numerical:1}.
\Cref{fig:numerical:2} shows the corresponding \normadj matrix, eigenvectors, and embedding dimension selections for this setting.
As before, the correct embedding dimension here is two positive eigenvalues and zero negative eigenvalues for all three choices of $\alpha$.
However, in this setting, only NetFlipPA produces an accurate estimate of the spectral noise floor and correctly selects the embedding dimension.
The RMT-based cutoff (derived for i.i.d. node-specific degree parameters) under-estimates the spectral noise floor and consequently over-selects both positive and negative eigenvalues for $\alpha = 0.5$ and $\alpha = 1$.
Indeed, this setting does not satisfy \cite[Assumption 1]{ali2017improved}.
In contrast, our theoretical analysis allows for non-i.i.d. node-specific degree parameters (see \cref{assump:model:DCSBM}), and thus NetFlipPA provably recovers the spectral noise floor asymptotically in this setting, too.

\section{Real Data Example}
\label{sec:polblogs}

We consider a real-world network of political blogs from \cite{adamic2005political}, in which nodes represent blogs about U.S. politics, and edges represent web links between them from a single day in 2005.
In this dataset, each blog has an associated label indicating its political leaning, namely liberal or conservative, interpretable as two-block ground truth structure.
This dataset is widely encountered in the network analysis literature, often in the study of DCSBMs beginning with \cite{karrer2011stochastic}.
Following convention, we symmetrize the original (directed) network and keep the largest connected component of the original network, yielding an undirected (sub)graph with $1222$ nodes with degree distribution having 0.0, 0.25, 0.5, 0.75, and 1.0 quantiles of 1, 3, 13, 36, and 351, respectively.

The prior work \cite{ali2017improved} investigates normalized adjacency matrix representations for this network dataset, finding that the choice $\alpha = 0$ is preferred on the basis of ground-truth overlap and modularity metrics.
With this in mind, we apply NetFlipPA to $\bmL_{\alpha}$ with $\alpha = 0$, using $\quantile=1$ and $T=100$.
Four dimensions are selected, corresponding to two positive and two negative eigenvalues.
\Cref{fig:polblogs} shows the pair plot for the corresponding degree-scaled eigenvectors, namely the eigenvectors of $\bmL_{0}$ pre-multiplied by $\bmD^{-1}$ as suggested in \cite{ali2017improved}.
The (degree-scaled) eigenvectors are evidently informative of political leaning, where, as described in the figure caption, liberal is shown in blue and conservative is shown in red.
In more detail, eigenvector~1 (whose associated eigenvalue is far above the NetFlipPA cutoff) visibly separates the blogs by political leaning, while eigenvector~2 (whose corresponding eigenvalue is close to the NetFlipPA cutoff) appears to be less informative.
On the other hand, eigenvectors~1221 and 1222 corresponding to the selected negative eigenvalues (often suggestive of disassortative network structure) interestingly exhibit two radial streaks corresponding to political leaning.
Overall, NetFlipPA provides a reasonable selection for both the positive and negative eigenvalues of this dataset.

\begin{figure*}[!htbp]
    \centering
    \includegraphics[scale=0.6]{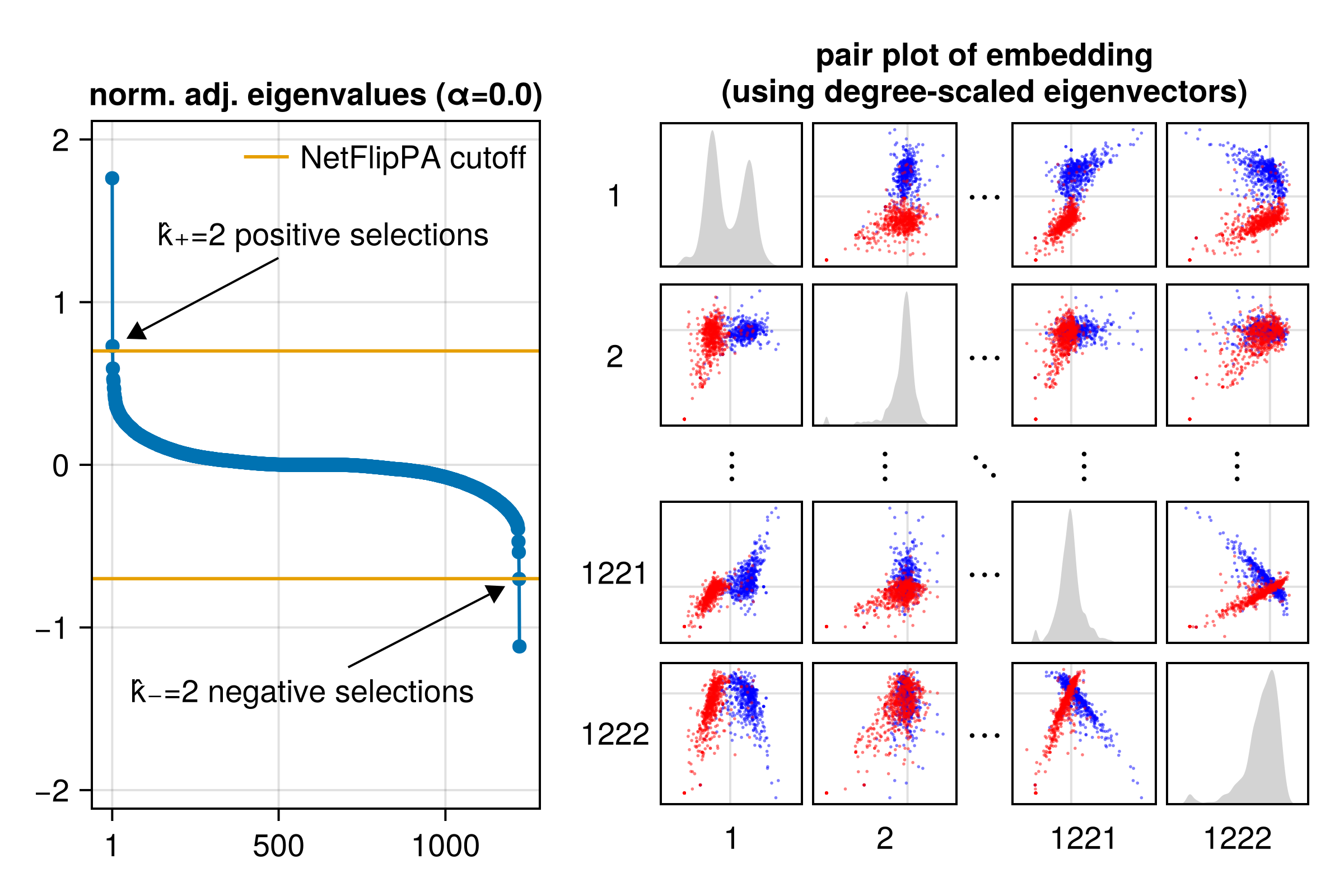}
    \caption{Embedding dimension selection for political blogs network from \cite{adamic2005political} with corresponding pair plot of the embedding vectors colored by political leaning, where blue indicates liberal and red indicates conservative.}
    \label{fig:polblogs}
\end{figure*}

\section{Theoretical Guarantees}
\label{sec:theory}

This section analyzes the performance of NetFlipPA (\cref{alg:netflippa}) for embeddings derived from the family of \normadj matrices given in \cref{eq:normadj}.
The overarching objective is to show that the signflipped \normadj matrix, $\bmR \circ \bmL_{\alpha}$, recovers the operator norm of the noise in $\bmL_{\alpha}$ for large random graphs.
As discussed and illustrated in \cref{sec:netflipPA,sec:numerical}, the network eigenvalues below this spectral noise floor are likely to have very noisy corresponding eigenvectors that contain limited or no information about the node community structure.

\Cref{sec:theory:model} describes the degree-corrected stochastic blockmodel random graphs we study.
\Cref{sec:theory:signalplusnoise} establishes a signal-plus-noise approximation of the \normadj matrix under this model.
\Cref{sec:theory:signflip:influence} then analyzes the influence of signflipping on the resulting signal and noise parts.
Finally, \cref{sec:theory:netflippa:performance} provides the main result, namely recovery of the spectral noise floor by NetFlipPA.

\subsection{Degree-corrected Stochastic Blockmodels}
\label{sec:theory:model}

We study degree-corrected stochastic blockmodels (DCSBMs) \cite{coja-oghlan2009finding,karrer2011stochastic} with weakly distinguishable community structure.
In this setting, the extreme network eigenvalues do not diverge, which makes estimation of the embedding dimension particularly challenging.
Specifically, under the DCSBM model we consider, unweighted undirected graphs on $n$ nodes have symmetric adjacency matrices $\bmA \in \{0,1\}^{n \times n}$ given by
\begin{equation}
    \label{eq:model:DCSBM}
    A_{ij}
    =
    A_{ji}
    \overset{\textnormal{ind}}{\sim}
    \operatorname{Bernoulli}(q_{i} q_{j} C_{g_{i} g_{j}}),
    \qquad
    1 \leq i \leq j \leq n
    ,
\end{equation}
where the entries of $\bmq \in (0,1)^n$ are node-specific degree parameters, the entries of $\bmg \in \{1,\dots,K\}^n$ define community memberships, $K$ is the number of communities, and $\bmC \in \bbR^{K \times K}$ is a matrix of community weights that defines the base inter- and intra-community edge probabilities.
The expected adjacency matrix here takes the form
\begin{equation}
    \label{eq:model:DCSBM:expectedA}
    \bbE [\bmA]
    =
    \bmD_{q}
    \bmJ\bmC\bmJ'
    \bmD_{q}
    ,
\end{equation}
where $\bmD_{q} \coloneqq \diag(\bmq)$, and $\bmJ \in \{0,1\}^{n \times K}$ denotes the community membership matrix corresponding to $\bmg$, i.e., $J_{ik} = 1$ if $g_{i} = k$ and $J_{ik} = 0$ otherwise.

The regime considered in this paper is given by the following assumption.
We emphasize that the assumption is weaker and hence more widely applicable than \cite[Assumption~1]{ali2017improved}.

\begin{assumption}
    \label{assump:model:DCSBM}
    As $n \to \infty$, for all $i,j \in \{1,\dots,n\}$,
    \begin{enumerate}
        \item (Weakly distinguishable community structure)
        $C_{g_{i} g_{j}} = 1 + \frac{M_{g_{i} g_{j}}}{\sqrt{n}}$, with $|M_{g_{i} g_{j}}| \leq \Mmax$,

        \item (Mild degree heterogeneity)
        $q_{i} \in [\qmin, \qmax] \subset (0,1)$,
    \end{enumerate}
    where $\Mmax$, $\qmin$, and $\qmax$ are universal positive constants not depending on $n$ or $K$.
    Here, $K$ is allowed to depend on $n$.
\end{assumption}

\begin{remark}[Consequences of Weakly Distinguishable Community Structure]
    The community structure condition in \cref{assump:model:DCSBM}
    implies that $\lim_{n \to \infty} \bbE[A_{ij}] = q_{i} q_{j}$ for all indices $i$ and $j$.
    Moreover, $\bbE[\bmA]$ converges to $\bmq\bmq'$ uniformly, since $\| \bbE[\bmA] - \bmq\bmq' \|_{\max} / \| \bmq\bmq' \|_{\max} \lesssim n^{-1/2}$.
    In other words, the population-level or expected community structure vanishes asymptotically, so the node communities are only weakly present when $n$ is large.
\end{remark}

\begin{remark}[Consequences of Mild Degree Heterogeneity]
    The mild degree heterogeneity condition in \cref{assump:model:DCSBM}
    implies that the node-specific degree parameters $\{q_{i}\}_{i=1}^{n}$ are of the same, constant order in magnitude and do not decay to zero as $n \to \infty$.
    As such, this condition permits mild degree heterogeneity but precludes severe degree heterogeneity.
\end{remark}

\Cref{assump:model:DCSBM} produces dense graphs, namely graphs whose expected number of edges are order $n^{2}$.
While perfect recovery of community structure from dense graphs is asymptotically possible for many previously studied models, that is not the case for the weakly distinguishable community structure in this regime.
Here, the normalized adjacency matrix exhibits non-diverging spiked eigenvalues, which generally precludes asymptotic perfect node clustering and leads to mathematical intricacies for establishing theoretical performance guarantees.
For additional related discussion, see \cite{ali2017improved,kadavankandy2019agf}.

\subsection{Signal-Plus-Noise Approximation of the \NormAdj Matrix}
\label{sec:theory:signalplusnoise}

We begin our analysis by showing that $\bmL_{\alpha}$ is well-approximated by a matrix $\btlL_{\alpha}$, which is a low-rank random signal matrix plus a random noise matrix, given as
\begin{equation}
    \label{eq:DCSBM:signalplusnoise}
    \btlL_{\alpha}
    \coloneqq
    \underbrace{
        \bmU\bmLambda\bmU'
    }_{\textnormal{signal } \btlS_{\alpha}}
    +
    \underbrace{
        \tfrac{1}{\sqrt{n}}\bmD_{q}^{-\alpha}\bmX\bmD_{q}^{-\alpha}
    }_{\textnormal{noise } \btlN_{\alpha}}
    ,
\end{equation}
where
\begin{align}
    \label{eq:DCSBM:signalparts}
    \bmU
    &\coloneqq
    \begin{bmatrix}
        \frac{\bmD_{q}^{1-\alpha} \bmJ}{\sqrt{n}}
        &
        \frac{\bmD_{q}^{-\alpha} \bmX \bm1_n}{\bmq' \bm1_n}
    \end{bmatrix}
    , &
    \bmLambda
    &\coloneqq
    \begin{bmatrix}
        (\bmI_{K} - \bm1_{K} \bbvc') \bmM (\bmI_{K} - \bbvc \bm1_{K}') & -\bm1_{K} \\
        -\bm1_{K}' & 0
    \end{bmatrix}
    , &
    \bbvc
    &\coloneqq
    \frac{\bmJ'\bmq}{\bm1_{n}'\bmq}
    ,
\end{align}
defines the signal matrix,
and
\begin{equation}
    \label{eq:DCSBM:noiseparts}
    \bmX
    \coloneqq
    \bmA
    -
    \bbE [\bmA]
    =
    \bmA
    -
    \left(
        \bmq\bmq'
        +
        \frac{1}{\sqrt{n}}
        \bmB
    \right)
\end{equation}
is a symmetric mean-zero random matrix with independent entries $X_{ij}$ for $i \geq j$, where $\bmB \coloneqq \bmD_{q} \bmJ \bmM \bmJ' \bmD_{q}$.

The following \lcnamecref{thm:DCSBM:approx:L:signflip:decay} formalizes this statement and is a refined, quantitative version of \cite[Theorem~2]{ali2017improved} that here also incorporates signflipping.
The proof, provided in \cref{proof:DCSBM:approx:L:signflip:decay}, relies on detailed matrix perturbation analysis that carefully controls the statistical moments of the non-asymptotic errors.
In words, \cref{thm:DCSBM:approx:L:signflip:decay} bounds the $\ellP{\cdot}$ norm of the operator norm $\opnorm{\cdot}$ of the difference between the \normadj matrix and its signal-plus-noise approximation.

\begin{theorem}[Signal-Plus-Noise Approximation of the \NormAdj Matrix]
    \label{thm:DCSBM:approx:L:signflip:decay}
    For any $\alpha \in \bbR$ and any $\mmt \in \bbN_{+}$,
    it holds that
    \begin{equation}
        \label{eq:DCSBM:approx:L:decay}
        \ellP{\opnorm{ \bmL_{\alpha} - \btlL_{\alpha} }}
        \lesssim
        \left(\frac{\log n}{n}\right)^{1/2}
        ,
    \end{equation}
    where $\btlL_{\alpha}$ is defined in \cref{eq:DCSBM:signalplusnoise,eq:DCSBM:signalparts,eq:DCSBM:noiseparts}.
    Moreover,
    for any $\alpha \in \bbR$ and any $\mmt \in \bbN_+$,
    it holds that
    \begin{equation}
        \label{eq:DCSBM:approx:L:signflip:decay}
        \sup_{\bmR \in \{-1,1\}^{n \times n} : \bmR = \bmR'}
        \;\;
        \ellP{\opnorm{ \bmR \circ \bmL_{\alpha} - \bmR \circ \btlL_{\alpha} }}
        \lesssim
        \left(\frac{\log n}{n}\right)^{1/2}
        .
    \end{equation}
\end{theorem}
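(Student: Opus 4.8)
The plan is to prove \cref{eq:DCSBM:approx:L:decay} first and then obtain \cref{eq:DCSBM:approx:L:signflip:decay} by rerunning the same argument while tracking, term by term, the effect of a worst-case symmetric signflip. Write $\nu \coloneqq \bm1_n'\bmq \asymp n$, $\bme \coloneqq \bmd - \nu\bmq = \tfrac{1}{\sqrt n}\bmD_\bmq\bmJ\bmM\bmJ'\bmq + \bmX\bm1_n$, and $\epsilon \coloneqq 2m - \nu^2 = \bm1_n'\bme$. First I would record the a priori estimates used throughout. A Chernoff bound shows $\min_i d_i \gtrsim n$ off an event of probability at most $n\,e^{-cn}$; since on that bad event $\opnorm{\bmL_\alpha - \btlL_\alpha}$ is deterministically at most a polynomial in $n$, its contribution to the relevant $\ellP{\cdot}$ norms is super-polynomially small and may be dropped, so we work on the good event, where all Taylor expansions below are valid. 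Bernstein's inequality then gives $\ellP{\|\bme\|_\infty} \lesssim \sqrt{n\log n}$, $\ellP{\|\bme\|_2} \lesssim n$, and $\ellP{|\epsilon|} \lesssim n^{3/2}$ for every fixed $\mmt$. Finally, the local laws for nonhomogeneous Wigner-type matrices \cite{ajanki2016ufg,ajanki2019qve} — applicable because \cref{assump:model:DCSBM} forces the entry variances of $\bmX$ to be of constant order uniformly — give $\ellP{\opnorm{\bmX}} \lesssim \sqrt n$; since symmetric signflipping leaves these variances unchanged and keeps the entries independent and mean zero, $\sup_{\bmR = \bmR'}\ellP{\opnorm{\bmR\circ\bmX}} \lesssim \sqrt n$ as well, and the sharpness of this $\sqrt n$ (as opposed to $\sqrt{n\log n}$) is what is ultimately needed.

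Next, the expansion. Since $\bmD = \nu\bmD_\bmq(\bmI_n + \bmF)$ with $\bmF \coloneqq \nu^{-1}\bmD_\bmq^{-1}\diag(\bme)$ diagonal, one has, with $\bmB \coloneqq \bmA - \tfrac{1}{2m}\bmd\bmd'$,
\[
    \bmL_\alpha \;=\; \frac{(1+\epsilon/\nu^2)^{\alpha}}{\sqrt n}\,\bmD_\bmq^{-\alpha}\,(\bmI_n+\bmF)^{-\alpha}\,\bmB\,(\bmI_n+\bmF)^{-\alpha}\,\bmD_\bmq^{-\alpha}.
\]
Expanding $(1+\epsilon/\nu^2)^{\pm\alpha} = 1 + O(|\epsilon|/\nu^2)$ and $(\bmI_n+\bmF)^{-\alpha} = \bmI_n + \bmG$ with $\bmG$ diagonal and $\opnorm{\bmG},\|\bmG\|_{\max}\lesssim \nu^{-1}\|\bme\|_\infty$, expanding $\bmB = \tfrac{1}{\sqrt n}\bmD_\bmq\bmJ\bmM\bmJ'\bmD_\bmq + \bmX - \tfrac{1}{\nu}\bmq\bme' - \tfrac{1}{\nu}\bme\bmq' + O(|\epsilon|/\nu^2)\,\bmq\bmq' - \tfrac{1}{\nu^2}\bme\bme' + (\text{smaller})$, substituting $\bme = \tfrac{1}{\sqrt n}\bmD_\bmq\bmJ\bmM\bmJ'\bmq + \bmX\bm1_n$, and using $\bmJ\bm1_K = \bm1_n$, $\bmq'\bmJ = \nu\bbvc'$, $\bm1_K'\bbvc = 1$, $\bmM = \bmM'$, the leading contributions assemble \emph{exactly} into $\btlS_\alpha + \btlN_\alpha = \btlL_\alpha$: the $\tfrac{1}{\sqrt n}\bmD_\bmq\bmJ\bmM\bmJ'\bmD_\bmq$ term together with the deterministic $\bme$-pieces produces the double centering $(\bmI_K - \bm1_K\bbvc')\bmM(\bmI_K - \bbvc\bm1_K')$ of $\bmLambda$, the $\bmX\bm1_n$-pieces produce the $-\bm1_K$ off-block couplings in $\bmU$, and $\tfrac{1}{\sqrt n}\bmD_\bmq^{-\alpha}\bmX\bmD_\bmq^{-\alpha}$ is $\btlN_\alpha$. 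Verifying that this assembly is exact — so that the remainder $\bmE \coloneqq \bmL_\alpha - \btlL_\alpha$ is a finite sum of terms, each a ``small'' scalar ($O(n^{-1/2})$, or $O(n^{-1})$ for Taylor remainders) or the ``small'' diagonal $\bmG$ ($O(\sqrt{\log n/n})$) multiplied by a $\bmD_\bmq^{\pm\alpha}$-conjugate of one of $\bmX$, $\bmq\bme'$, $\bme\bmq'$, $\bmq\bmq'$, $\bme\bme'$, $\tfrac{1}{\sqrt n}\bmD_\bmq\bmJ\bmM\bmJ'\bmD_\bmq$, $\bmB$ — is the central bookkeeping step, and is the non-asymptotic, quantitative refinement of \cite[Theorem~2]{ali2017improved}.

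It then remains to bound $\ellP{\opnorm{\bmE}}$ via the triangle inequality over the finitely many terms, H\"older's inequality (applied with each factor in $\mathsf{L}_{3\mmt}$, harmless since the claim is for every $\mmt$), and the a priori estimates. The representative, rate-determining bound is $\ellP{\opnorm{\tfrac{1}{\sqrt n}\bmD_\bmq^{-\alpha}\bmG\bmX\bmD_\bmq^{-\alpha}}} \lesssim \tfrac{1}{\sqrt n}\,\ellP[2\mmt]{\opnorm{\bmG}}\,\ellP[2\mmt]{\opnorm{\bmX}} \lesssim \tfrac{1}{\sqrt n}\cdot\sqrt{\tfrac{\log n}{n}}\cdot\sqrt n = \sqrt{\tfrac{\log n}{n}}$; the $O(n^{-1/2})$-scalar, $\bme\bme'$-factor, and Taylor-remainder terms are $O(n^{-1/2})$ or $O(n^{-1})$, and the terms in which $\bmG$ meets the rank-structured signal ($O(1/n)$ entries, so $\bmG$ times it has $O(\sqrt{\log n}\,n^{-3/2})$ entries) or cross pieces are bounded using $\opnorm{\cdot}\le\|\cdot\|_{\frob}\le n\|\cdot\|_{\max}$ or $\opnorm{\bmu\bmv'}=\|\bmu\|_2\|\bmv\|_2\le\sqrt n\,\|\bmu\|_\infty\|\bmv\|_2$; summing gives \cref{eq:DCSBM:approx:L:decay}. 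For \cref{eq:DCSBM:approx:L:signflip:decay} one writes $\bmR\circ(\bmL_\alpha-\btlL_\alpha)=\sum_k\bmR\circ\bmE_k$ and estimates each $\bmE_k$ by one of three bounds chosen by its type: (a) diagonal factors pass through the Hadamard product, $\bmR\circ(\bmP\bmY\bmP')=\bmP(\bmR\circ\bmY)\bmP'$, so the $\bmX$-type terms reduce to $\sup_{\bmR}\opnorm{\bmR\circ\bmX}\lesssim\sqrt n$; (b) the signal-structured terms obey $\opnorm{\bmR\circ\bmY}\le\|\bmR\circ\bmY\|_{\frob}=\|\bmY\|_{\frob}\le n\|\bmY\|_{\max}$; and (c) the rank-one cross terms $c\,\bmu\bmv'$ obey $\opnorm{\bmR\circ(\bmu\bmv')}=\opnorm{\diag(\bmu)\,\bmR\,\diag(\bmv)}\le\|\bmu\|_\infty\sqrt n\,\|\bmv\|_2$, using that every column of a $\pm1$ matrix has Euclidean norm $\sqrt n$. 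A term-by-term check that the small scalar/diagonal prefactor absorbs the resulting worst-case factor of $\sqrt n$ or $n$ down to $\sqrt{\log n/n}$ completes the proof.

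The step I expect to be the main obstacle is the signflipped bound \cref{eq:DCSBM:approx:L:signflip:decay}, specifically its uniformity over \emph{all} symmetric sign patterns $\bmR$, including adversarial ones such as $\bmR = \bm1_n\bm1_n'$ for which $\opnorm{\bmR} = n$: the crude estimate $\opnorm{\bmR\circ\bmE_k}\lesssim\opnorm{\bmR}\,\|\bmE_k\|_{\max}$ is too lossy for the cross terms, so one must choose, term by term, the correct member of (a)--(c); the structural fact that makes this possible is that every error term is either a diagonal conjugation of $\bmX$ (which signflipping maps to a Wigner-type matrix with the same variance profile, hence the same $\opnorm{\cdot}$ bound) or is small enough in max-/$\ell_\infty$-norm to beat a worst-case factor of $n$. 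A secondary, more routine difficulty is the exact verification of the assembly into $\btlL_\alpha$ — carrying the $\bm1_K\bbvc'$ centerings correctly — together with the moment bookkeeping through H\"older; in particular the signal-meeting terms must be bounded through $\|\cdot\|_{\max}$ rather than through $\opnorm{\bmM}$ to keep the implicit constants free of $K$.
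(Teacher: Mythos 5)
Your proposal follows essentially the same route as the paper's proof: Taylor-expand the degree normalizations around their deterministic limits, verify that the leading terms assemble exactly into $\btlS_{\alpha} + \btlN_{\alpha}$, bound the finitely many residual terms in $\ellP{\cdot}$ via H\"older and sub-Gaussian estimates (with a truncation device to tame the Taylor remainders when degrees are degenerate, where the paper conditions via the law of total expectation rather than a min-degree Chernoff event), and extend to arbitrary symmetric $\bmR$ using exactly the paper's Hadamard-product observations (diagonal pass-through, Frobenius/rank-one invariance, preservation of sub-Gaussianity). The only cosmetic difference is that you invoke the local laws of \cite{ajanki2016ufg,ajanki2019qve} for $\ellP{\opnorm{\bmX}} \lesssim \sqrt{n}$, whereas the paper obtains this bound (and its signflipped analogue, uniformly in $\bmR$) from the elementary $\varepsilon$-net estimate in \cref{lem:opnorm:subgaussian}, reserving the local laws for \cref{thm:DCSBM:noise:signflip:preserve}.
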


\subsection{Influence of Signflipping on the Signal and Noise Matrices}
\label{sec:theory:signflip:influence}

The approximation established in \cref{thm:DCSBM:approx:L:signflip:decay} enables us to study the behavior of NetFlipPA by studying the influence of signflipping on the signal $\btlS_{\alpha}$ and the noise $\btlN_{\alpha}$ separately.
In particular, we will show that signflipping destroys the signal while preserving the operator norm of the noise.

We begin with \cref{thm:DCSBM:signal:signflip:destroy} which establishes two key properties of the signal $\btlS_{\alpha}$.
The first property is a characterization of the $\ell_{2,\infty}$ norm of $\btlS_{\alpha}$ that controls the extent to which the signal is (de)localized on individual rows.
The second property is a characterization of the operator norm of the signflipped signal $\bmR \circ \btlS_{\alpha}$ that quantifies the strength of the signal after signflipping.
Notably, \cref{thm:DCSBM:signal:signflip:destroy} bounds the rate of decay of the sub-Gaussian norm $\psi_{2}$ and the sub-Exponential norm $\psi_{1}$ of these quantities, which directly implies bounds on the statistical moments.
The proof, provided in \cref{proof:DCSBM:2_infty}, involves a careful analysis of these norms that exploits the special structure of $\btlS_{\alpha}$.

\begin{theorem}[Signal Delocalization and Signflipped Signal Destruction]
    \label{thm:DCSBM:2_infty}
    \label{thm:DCSBM:signal:signflip:destroy}
    For any $\alpha \in \bbR$ and for $\btlS_{\alpha}$ defined in \cref{eq:DCSBM:signalplusnoise,eq:DCSBM:signalparts}, it holds that
    \begin{equation}
        \subG{\twoinfnorm{ \btlS_{\alpha} }}
        \lesssim
        \left(\frac{\log n}{n}\right)^{1/2}
        ,
        \qquad
        \text{and}
        \qquad
        \subE{\opnorm{ \bmR \circ \btlS_{\alpha} }}
        \lesssim
        \left(\frac{\log n}{n}\right)^{1/2}
        ,
    \end{equation}
    where $\bmR \in \{-1,1\}^{n \times n}$
    is drawn uniformly at random from the set of symmetric signflip matrices.
\end{theorem}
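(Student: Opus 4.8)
The plan is to expand the rank-$(K+1)$ signal $\btlS_{\alpha}=\bmU\bmLambda\bmU'$ into a few explicitly structured pieces and control each one using its exact form, being careful never to bound $\opnorm{\bmM}$ or $\opnorm{\bmI_{K}-\bm1_{K}\bbvc'}$ by submultiplicativity (which would leak spurious factors of $K$). Write $\bmU=[\bmU_{1}\mid\bmU_{2}]$, where $\bmU_{1}\coloneqq n^{-1/2}\bmD_{\bmq}^{1-\alpha}\bmJ\in\bbR^{n\times K}$ is the deterministic block, $\bmU_{2}\coloneqq(\bmq'\bm1_{n})^{-1}\bmD_{\bmq}^{-\alpha}\bmX\bm1_{n}\in\bbR^{n}$ is the random column, and let $\bmLambda_{11}\coloneqq(\bmI_{K}-\bm1_{K}\bbvc')\bmM(\bmI_{K}-\bbvc\bm1_{K}')$ denote the $(1,1)$ block of $\bmLambda$. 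Using $\bmJ\bm1_{K}=\bm1_{n}$ and the centering identity $\bbvc'\bm1_{K}=1$, block multiplication gives
\begin{equation*}
    \btlS_{\alpha}
    =
    \bmU_{1}\bmLambda_{11}\bmU_{1}'
    -\bmw\bmU_{2}'
    -\bmU_{2}\bmw',
    \qquad
    \bmw\coloneqq\bmU_{1}\bm1_{K}=n^{-1/2}\bmD_{\bmq}^{1-\alpha}\bm1_{n},
\end{equation*}
so it suffices to bound the three summands. The first is fully deterministic and will be controlled entrywise; the other two involve only the random vector $\bmU_{2}$, i.e., (a rescaling of) the row sums of $\bmX$.

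I would first record the following elementary facts under \cref{assump:model:DCSBM}. (i) Since $|M_{kl}|\le\Mmax$, $\bbvc\ge 0$ entrywise, and $\bm1_{K}'\bbvc=1$, expanding $(\bmLambda_{11})_{kl}=M_{kl}-(\bbvc'\bmM)_{l}-(\bmM\bbvc)_{k}+\bbvc'\bmM\bbvc$ bounds each of the four terms by $\Mmax$, so $\|\bmLambda_{11}\|_{\max}\le 4\Mmax$; since the $i$-th row of $\bmU_{1}$ equals $n^{-1/2}q_{i}^{1-\alpha}$ in column $g_{i}$ and vanishes elsewhere, $(\bmU_{1}\bmLambda_{11}\bmU_{1}')_{ij}=n^{-1}q_{i}^{1-\alpha}q_{j}^{1-\alpha}(\bmLambda_{11})_{g_{i}g_{j}}$, whence $\|\bmU_{1}\bmLambda_{11}\bmU_{1}'\|_{\max}\lesssim n^{-1}$ and therefore $\twoinfnorm{\bmU_{1}\bmLambda_{11}\bmU_{1}'}\le\sqrt{n}\,\|\bmU_{1}\bmLambda_{11}\bmU_{1}'\|_{\max}\lesssim n^{-1/2}$. (ii) Because $\bmq\in[\qmin,\qmax]\subset(0,1)$ and $\alpha$ is fixed, $\|\bmw\|_{\infty}\asymp n^{-1/2}$ and $\|\bmw\|_{2}\asymp 1$. (iii) For each $i$, $(\bmX\bm1_{n})_{i}=\sum_{j}X_{ij}$ is a sum of $n$ independent, centered, $[-1,1]$-bounded random variables, so $\subG{(\bmX\bm1_{n})_{i}}\lesssim\sqrt{n}$ by the sub-Gaussian bound for bounded sums; since $\bmq'\bm1_{n}\ge n\qmin$ and $q_{i}^{-\alpha}\lesssim 1$, we get $\subG{(\bmU_{2})_{i}}\lesssim n^{-1/2}$, hence $\subG{\|\bmU_{2}\|_{\infty}}=\subG{\max_{i}|(\bmU_{2})_{i}|}\lesssim\sqrt{\log n}\,\max_{i}\subG{(\bmU_{2})_{i}}\lesssim(\log n/n)^{1/2}$ by the standard maximal inequality. (iv) $\|\bmU_{2}\|_{2}\le(\bmq'\bm1_{n})^{-1}\|\bmD_{\bmq}^{-\alpha}\|_{2}\,\opnorm{\bmX}\,\|\bm1_{n}\|_{2}\lesssim\opnorm{\bmX}/\sqrt{n}$, and since $\opnorm{\bmX}$ for the bounded‑entry symmetric random matrix $\bmX$ satisfies $\bbE\opnorm{\bmX}\lesssim\sqrt{n}$ and concentrates with $O(1)$-scale sub-Gaussian fluctuations (a convex $1$-Lipschitz function of the independent entries), we obtain $\subG{\opnorm{\bmX}}\lesssim\sqrt{n}$ and thus $\subG{\|\bmU_{2}\|_{2}}\lesssim 1$.

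For the first claim, a row-wise triangle inequality gives
\begin{equation*}
    \twoinfnorm{\btlS_{\alpha}}
    \le
    \twoinfnorm{\bmU_{1}\bmLambda_{11}\bmU_{1}'}
    +\|\bmw\|_{\infty}\|\bmU_{2}\|_{2}
    +\|\bmw\|_{2}\|\bmU_{2}\|_{\infty}
    \lesssim
    n^{-1/2}\bigl(1+\|\bmU_{2}\|_{2}\bigr)+\|\bmU_{2}\|_{\infty},
\end{equation*}
using that the $i$-th row of $\bmw\bmU_{2}'$ (resp.\ $\bmU_{2}\bmw'$) is $w_{i}\bmU_{2}'$ (resp.\ $(\bmU_{2})_{i}\bmw'$). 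Taking $\subG{\cdot}$ and invoking facts (i)–(iv) yields $\subG{\twoinfnorm{\btlS_{\alpha}}}\lesssim n^{-1/2}(1+O(1))+(\log n/n)^{1/2}\asymp(\log n/n)^{1/2}$.

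For the second claim, since $\bmR=\bmR'$ we have $(\bmR\circ\bmw\bmU_{2}')'=\bmR\circ\bmU_{2}\bmw'$, so $\opnorm{\bmR\circ\btlS_{\alpha}}\le\opnorm{\bmR\circ(\bmU_{1}\bmLambda_{11}\bmU_{1}')}+2\opnorm{\bmR\circ(\bmw\bmU_{2}')}$. Set $\bmB\coloneqq\bmU_{1}\bmLambda_{11}\bmU_{1}'$; as a symmetric matrix Rademacher series, $\bmR\circ\bmB=\sum_{k}\epsilon_{k}\bmH_{k}$ with $\opnorm{\sum_{k}\bmH_{k}^{2}}=\max_{i}\sum_{j}B_{ij}^{2}=\twoinfnorm{\bmB}^{2}$, so the noncommutative Khintchine (matrix Rademacher) inequality gives $\subG{\opnorm{\bmR\circ\bmB}}\lesssim\sqrt{\log n}\,\twoinfnorm{\bmB}\lesssim(\log n/n)^{1/2}$ by fact (i), hence the same bound for $\subE{\cdot}$. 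For the cross piece, $\bmR\circ(\bmw\bmU_{2}')=\diag(\bmw)\bmR\diag(\bmU_{2})$, so $\opnorm{\bmR\circ(\bmw\bmU_{2}')}\le\|\bmw\|_{\infty}\opnorm{\bmR}\|\bmU_{2}\|_{\infty}\lesssim\bigl(n^{-1/2}\opnorm{\bmR}\bigr)\|\bmU_{2}\|_{\infty}$; here $\subG{n^{-1/2}\opnorm{\bmR}}\lesssim 1$ by standard concentration for the operator norm of a symmetric Rademacher matrix and $\subG{\|\bmU_{2}\|_{\infty}}\lesssim(\log n/n)^{1/2}$ by fact (iii), so by $\|\xi\eta\|_{\psi_{1}}\le\|\xi\|_{\psi_{2}}\|\eta\|_{\psi_{2}}$ we get $\subE{\opnorm{\bmR\circ(\bmw\bmU_{2}')}}\lesssim(\log n/n)^{1/2}$. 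Adding the two contributions proves $\subE{\opnorm{\bmR\circ\btlS_{\alpha}}}\lesssim(\log n/n)^{1/2}$. I expect the main obstacle to be fact (i): establishing that $\bmU_{1}\bmLambda_{11}\bmU_{1}'$ is uniformly $\lesssim n^{-1}$ entrywise—and hence $\lesssim n^{-1/2}$ in $\ell_{2,\infty}$ and, after signflipping, $\lesssim(\log n/n)^{1/2}$ in operator norm—with constants free of $K$; any route through $\opnorm{\bmM}\le\|\bmM\|_{\frob}$ or through $\opnorm{\bmI_{K}-\bm1_{K}\bbvc'}$ bleeds factors of $K$ or $\sqrt{K}$ that are fatal when $K$ grows with $n$, and the saving grace is precisely that the centering relation $\bbvc'\bm1_{K}=1$ keeps $\bmLambda_{11}$ bounded in the $\|\cdot\|_{\max}$ norm, which is exactly the quantity that both $\ell_{2,\infty}$ and the matrix-Rademacher bound are able to exploit. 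A secondary care-point is bookkeeping the two independent sources of randomness in the operator-norm terms so that the graph randomness (through $\bmU_{2},\bmX$) and the signflip randomness (through $\bmR$) each contribute their own $\psi_{2}$ factor, turning a product of sub-Gaussians into a sub-exponential of the right order rather than overcounting a factor of $\sqrt{n}$.
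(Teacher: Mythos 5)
Your proposal is correct and follows essentially the same route as the paper: the identical block decomposition $\btlS_{\alpha}=\btlU\btlLambda\btlU'-\btlU\bm1_{K}\bmu_{K+1}'-\bmu_{K+1}\bm1_{K}'\btlU'$, the same $K$-free entrywise bound $\|\btlU\btlLambda\btlU'\|_{\max}\lesssim n^{-1}$ via $\bbvc$ lying on the simplex, the same maximal inequality for $\|\bmu_{K+1}\|_{\infty}$, and the same $\diag(\bmw)\bmR\diag(\bmu_{K+1})$ factorization with the sub-Gaussian-times-sub-Gaussian-is-sub-exponential step for the signflipped cross terms. Your only deviations are tool substitutions within this structure (matrix Rademacher/Khintchine with $\twoinfnorm{\cdot}$ in place of the paper's $\varepsilon$-net lemma for $\bmR\circ(\btlU\btlLambda\btlU')$, and an $\opnorm{\bmX}$-concentration route instead of the direct $\psi_{1}$-summation for $\|\bmu_{K+1}\|_{2}$), both of which yield the required rates.
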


Next, we proceed with \cref{thm:DCSBM:noise:signflip:preserve} below which establishes that the spectral noise floor, i.e., the operator norm of $\btlN_{\alpha}$, is preserved by random symmetric signflipping.
This preservation is nontrivial since $\ellP{\opnorm{\btlN_{\alpha}}} \gtrsim 1$.
To establish a precise operator norm comparison, our proof, provided in \cref{proof:DCSBM:noise:signflip:preserve}, leverages recent progress in random matrix theory on establishing local laws for general (nonhomogeneous) Wigner-type matrices \cite{ajanki2016ufg,ajanki2019qve}.

\begin{theorem}[Signflipped Noise Floor Preservation]
    \label{thm:DCSBM:noise:signflip:preserve}
    For any $\alpha \in \bbR$, any $\mmt \in \bbN_{+}$, and for $\btlN_{\alpha}$ defined in \cref{eq:DCSBM:signalplusnoise,eq:DCSBM:noiseparts}, it holds that
    \begin{equation}
        \label{eq:signflipped:noise}
        \ellP{\opnorm{\bmR \circ \btlN_{\alpha}} - \opnorm{\btlN_{\alpha}}}
        \lesssim
        n^{-1/2}
        ,
    \end{equation}
    where $\bmR \in \{-1,1\}^{n \times n}$
    is drawn uniformly at random from the set of symmetric signflip matrices.
\end{theorem}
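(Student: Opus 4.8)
The plan is to reduce the problem to a comparison of two Wigner-type matrices sharing the same variance profile and then invoke the edge behavior of the local law. Start from the Hadamard identity $\bmR \circ \btlN_{\alpha} = \tfrac{1}{\sqrt{n}}\,\bmD_{\bmq}^{-\alpha}(\bmR \circ \bmX)\,\bmD_{\bmq}^{-\alpha}$ and observe that, \emph{unconditionally}, both $\bmX$ and $\bmR \circ \bmX$ are symmetric matrices whose upper-triangular entries are independent, mean zero, bounded in absolute value by $1$, and (because the Rademacher matrix $\bmR$ is independent of $\bmA$) share the same entrywise second moments
\[
S_{ij} \;\coloneqq\; \bbE\bigl[X_{ij}^{2}\bigr] \;=\; \bbE\bigl[(R_{ij}X_{ij})^{2}\bigr] \;=\; q_{i}q_{j}C_{g_{i}g_{j}}\bigl(1 - q_{i}q_{j}C_{g_{i}g_{j}}\bigr).
\]
Hence $\btlN_{\alpha}$ and $\bmR \circ \btlN_{\alpha}$ are mean-zero Wigner-type matrices (no deterministic perturbation, since the signal was already peeled off) with the \emph{identical} rescaled variance profile $\widetilde{S}_{ij} = n^{-1}q_{i}^{-2\alpha}q_{j}^{-2\alpha}S_{ij}$. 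Under \cref{assump:model:DCSBM} we have $q_{i} \in [\qmin,\qmax] \subset (0,1)$ and $C_{g_{i}g_{j}} = 1 + O(n^{-1/2})$, so $q_{i}q_{j}C_{g_{i}g_{j}}$ is bounded away from $0$ and $1$ for large $n$; therefore $S_{ij} \asymp 1$ and $\widetilde{S}_{ij} \asymp n^{-1}$ uniformly, i.e.\ the variance profile is \emph{flat} at scale $n^{-1}$, which is the regularity required by the local laws of \cite{ajanki2016ufg,ajanki2019qve}. (The diagonal entries $X_{ii}$ contribute only a diagonal of order $n^{-1}$ to $\btlN_{\alpha}$ and are covered by the same hypotheses.)

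The crucial structural point is that the self-consistent spectral density of a Wigner-type matrix --- the solution of the associated quadratic vector (Dyson) equation --- depends on the ensemble \emph{only through its variance profile}. Thus $\btlN_{\alpha}$ and $\bmR \circ \btlN_{\alpha}$ have the \emph{same} limiting spectral measure and the \emph{same} deterministic spectral edges $\mathfrak{e}_{-} = \mathfrak{e}_{-}(\widetilde{S})$ and $\mathfrak{e}_{+} = \mathfrak{e}_{+}(\widetilde{S})$ (both of order $1$, since $\sum_{j}\widetilde{S}_{ij} \asymp 1$). Invoking edge rigidity from \cite{ajanki2016ufg,ajanki2019qve}, one gets that with overwhelming probability (probability at least $1 - n^{-D}$ for every fixed $D>0$ once $n$ is large),
\[
\bigl|\lambda_{\max}(\btlN_{\alpha}) - \mathfrak{e}_{+}\bigr| + \bigl|\lambda_{\min}(\btlN_{\alpha}) - \mathfrak{e}_{-}\bigr| \;\le\; n^{-2/3 + o(1)},
\]
and the same bound holds for $\bmR \circ \btlN_{\alpha}$ with the \emph{same} $\mathfrak{e}_{\pm}$. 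Since $\opnorm{\bmB} = \max\{\lambda_{\max}(\bmB),\,-\lambda_{\min}(\bmB)\}$ for symmetric $\bmB$, a union bound yields an event $\mathcal{E}_{n}$ of overwhelming probability on which both operator norms equal $\max\{\mathfrak{e}_{+},-\mathfrak{e}_{-}\}$ up to an $O(n^{-2/3+o(1)})$ error, so that
\[
\Bigl|\,\opnorm{\bmR \circ \btlN_{\alpha}} - \opnorm{\btlN_{\alpha}}\,\Bigr| \;\le\; n^{-2/3 + o(1)} \;\le\; n^{-1/2}
\]
for $n$ large, the deterministic leading term having cancelled.

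It remains to upgrade this to the moment bound in \cref{eq:signflipped:noise}. Set $Y \coloneqq |\,\opnorm{\bmR \circ \btlN_{\alpha}} - \opnorm{\btlN_{\alpha}}\,|$. Deterministically $|X_{ij}| \le 1$ and $q_{i}^{-\alpha} \le \max\{\qmin^{-\alpha},\qmax^{-\alpha}\}$, so $\opnorm{\btlN_{\alpha}} \le \|\btlN_{\alpha}\|_{\frob} \le C\sqrt{n}$ and likewise for $\bmR \circ \btlN_{\alpha}$, giving $Y \le 2C\sqrt{n}$ almost surely. Splitting the expectation over $\mathcal{E}_{n}$ and its complement,
\[
\bbE\bigl[Y^{\mmt}\bigr] \;=\; \bbE\bigl[Y^{\mmt}\mathbf{1}_{\mathcal{E}_{n}}\bigr] + \bbE\bigl[Y^{\mmt}\mathbf{1}_{\mathcal{E}_{n}^{c}}\bigr] \;\le\; n^{-\mmt/2} + (2C\sqrt{n})^{\mmt}\,\mathbb{P}(\mathcal{E}_{n}^{c}),
\]
and choosing $D = D(\mmt)$ large enough in $\mathbb{P}(\mathcal{E}_{n}^{c}) \le n^{-D}$ makes the second term $\lesssim n^{-\mmt/2}$. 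Taking $\mmt$-th roots gives $\ellP{Y} \lesssim n^{-1/2}$, as claimed.

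I expect the main obstacle to be the second step: carefully matching the DCSBM-induced flat variance profile $\widetilde{S}$ to the precise hypotheses under which \cite{ajanki2016ufg,ajanki2019qve} deliver edge rigidity at a rate $o(n^{-1/2})$ (e.g.\ $n^{-2/3+o(1)}$), and making fully explicit that the spectral edges $\mathfrak{e}_{\pm}$ are functionals of $\widetilde{S}$ alone --- so that signflipping, which preserves every entrywise second moment while altering the laws of the entries, cannot move them. If one prefers to bypass the edge behavior of the local law, an alternative is a direct trace-method comparison of $\bbE\operatorname{tr}(\btlN_{\alpha}^{2p})$ and $\bbE\operatorname{tr}\bigl((\bmR\circ\btlN_{\alpha})^{2p}\bigr)$ for $p \asymp \log n$: in expectation, signflipping annihilates exactly the closed walks that traverse some edge an odd number of times, and one must verify that the surviving edge-even walks reproduce the same leading count so that the two traces --- hence the induced largest-eigenvalue estimates --- agree up to a common $n^{-\Theta(1)}$ error; but carrying out that bookkeeping at the spectral-edge scale is at least as delicate as invoking the local law.
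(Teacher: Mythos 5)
Your proposal is correct and follows essentially the same route as the paper: signflipping preserves the entrywise variance profile and hence the self-consistent (quadratic vector equation) spectral edges, so edge rigidity from \cite{ajanki2016ufg,ajanki2019qve} pins both $\opnorm{\btlN_{\alpha}}$ and $\opnorm{\bmR \circ \btlN_{\alpha}}$ to the same deterministic edge with overwhelming probability, after which the $\mathsf{L}_{\mmt}$ bound follows by splitting on that event and using the crude deterministic $O(\sqrt{n})$ bound off it. The step you flag as the main obstacle --- verifying the local-law hypotheses for the DCSBM-induced variance profile, in particular uniform boundedness of the QVE solution via \cite[Theorem 6.1]{ajanki2019qve} --- is precisely where the paper's proof spends most of its effort (checking Assumptions A--D of \cite{ajanki2016ufg} for the rescaled matrix $\qmin^{2\alpha}\btlN_{\alpha}$), and your claim that the flat, uniformly bounded-below profile suffices is exactly what that verification establishes.
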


\subsection{Main Result: Performance of NetFlipPA}
\label{sec:theory:netflippa:performance}

Having characterized the influence of signflipping on the signal $\btlS_{\alpha}$ and the noise $\btlN_{\alpha}$, we are now ready to state our main result.
\Cref{thm:DCSBM:noise:recovery} guarantees that the signflipped \normadj matrix asymptotically recovers the spectral noise floor $\opnorm{\btlN_{\alpha}}$, thus providing theoretical support for NetFlipPA with $T = 1$ and $\quantile = 1$ in the present DCSBM setting.
The proof is provided in \cref{proof:DCSBM:noise:recovery}.

\begin{theorem}[NetFlipPA Recovers the Noise Floor]
    \label{thm:DCSBM:noise:recovery}
    For any $\alpha \in \bbR$ and any $\mmt \in \bbN_{+}$, and for $\btlN_{\alpha}$ defined in \cref{eq:DCSBM:signalplusnoise,eq:DCSBM:noiseparts},
    it holds that
    \begin{equation}
        \ellP{ \opnorm{\bmR \circ \bmL_{\alpha}} - \opnorm{\btlN_{\alpha}} }
        \lesssim
        \left(\frac{\log n}{n}\right)^{1/2}
        ,
    \end{equation}
    where $\bmR \in \{-1,1\}^{n \times n}$ is drawn uniformly at random from the set of symmetric signflip matrices.
\end{theorem}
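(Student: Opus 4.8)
The plan is to view $\bmR \circ \bmL_{\alpha}$ as the signflipped signal-plus-noise approximation $\bmR \circ \btlL_{\alpha} = \bmR \circ \btlS_{\alpha} + \bmR \circ \btlN_{\alpha}$ perturbed by a small error, and then to assemble \cref{thm:DCSBM:approx:L:signflip:decay,thm:DCSBM:signal:signflip:destroy,thm:DCSBM:noise:signflip:preserve} using only the triangle inequality for the operator norm and Minkowski's inequality for the $\ellP{\cdot}$ norm. Concretely, I would first insert $\pm \opnorm{\bmR \circ \btlL_{\alpha}}$ and $\pm \opnorm{\bmR \circ \btlN_{\alpha}}$ to write
\begin{align*}
    \opnorm{\bmR \circ \bmL_{\alpha}} - \opnorm{\btlN_{\alpha}}
    &= \bigl(\opnorm{\bmR \circ \bmL_{\alpha}} - \opnorm{\bmR \circ \btlL_{\alpha}}\bigr)
      + \bigl(\opnorm{\bmR \circ \btlL_{\alpha}} - \opnorm{\bmR \circ \btlN_{\alpha}}\bigr) \\
    &\quad + \bigl(\opnorm{\bmR \circ \btlN_{\alpha}} - \opnorm{\btlN_{\alpha}}\bigr) .
\end{align*}
By the reverse triangle inequality $\bigl|\opnorm{\bmA} - \opnorm{\bmB}\bigr| \le \opnorm{\bmA - \bmB}$, together with bilinearity of the Hadamard product and $\btlL_{\alpha} - \btlN_{\alpha} = \btlS_{\alpha}$, the first parenthesized term is at most $\opnorm{\bmR \circ (\bmL_{\alpha} - \btlL_{\alpha})}$ in absolute value and the second is at most $\opnorm{\bmR \circ \btlS_{\alpha}}$, while the third term is exactly the quantity controlled in \cref{thm:DCSBM:noise:signflip:preserve}. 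Minkowski's inequality (valid since $\mmt \ge 1$) then reduces the claim to bounding the $\ellP{\cdot}$ norm of each of these three quantities by $O\bigl((\log n / n)^{1/2}\bigr)$.

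The three bounds are then read off from the earlier theorems. The error term $\ellP{\opnorm{\bmR \circ (\bmL_{\alpha} - \btlL_{\alpha})}}$ is handled by \cref{thm:DCSBM:approx:L:signflip:decay}: that result bounds this norm uniformly over all deterministic symmetric sign matrices by $O\bigl((\log n / n)^{1/2}\bigr)$, and since the random $\bmR$ is independent of $\bmA$ (hence of $\bmL_{\alpha}$ and $\btlL_{\alpha}$), conditioning on $\bmR$ and applying the bound pointwise gives the same rate after taking the outer expectation. For the signflipped signal, \cref{thm:DCSBM:signal:signflip:destroy} gives $\subE{\opnorm{\bmR \circ \btlS_{\alpha}}} \lesssim (\log n / n)^{1/2}$; using the standard fact that the $\mmt$-th moment norm of a sub-exponential variable is at most a constant multiple of $\mmt$ times its $\psi_{1}$ norm, and recalling that $\mmt$ is a fixed constant, this yields $\ellP{\opnorm{\bmR \circ \btlS_{\alpha}}} \lesssim (\log n / n)^{1/2}$. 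Finally, \cref{thm:DCSBM:noise:signflip:preserve} directly gives $\ellP{\opnorm{\bmR \circ \btlN_{\alpha}} - \opnorm{\btlN_{\alpha}}} \lesssim n^{-1/2} \le (\log n / n)^{1/2}$. Summing the three contributions completes the argument.

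I do not anticipate a genuine obstacle here: once \cref{thm:DCSBM:approx:L:signflip:decay,thm:DCSBM:signal:signflip:destroy,thm:DCSBM:noise:signflip:preserve} are in hand, this is a short assembly step. The only points needing care are bookkeeping ones — making sure the uniform-in-$\bmR$ bound of \cref{thm:DCSBM:approx:L:signflip:decay} transfers to the random (but independent) $\bmR$ via conditioning, and correctly converting the $\psi_{1}$-norm control of \cref{thm:DCSBM:signal:signflip:destroy} into an $\ellP{\cdot}$-norm bound with a $\mmt$-dependent constant absorbed into $\lesssim$. All of the analytic difficulty has already been spent in establishing those three theorems (in particular the local-law input behind the noise-floor preservation), so no new estimates are required.
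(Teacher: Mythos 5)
Your proposal is correct and is essentially identical to the paper's own proof: the same insertion of $\pm \opnorm{\bmR \circ \btlL_{\alpha}}$ and $\pm \opnorm{\bmR \circ \btlN_{\alpha}}$, the same use of the reverse triangle inequality to reduce the first two differences to $\opnorm{\bmR \circ (\bmL_{\alpha} - \btlL_{\alpha})}$ and $\opnorm{\bmR \circ \btlS_{\alpha}}$, and the same invocation of \cref{thm:DCSBM:approx:L:signflip:decay,thm:DCSBM:signal:signflip:destroy,thm:DCSBM:noise:signflip:preserve} for the three terms. Your explicit remarks about conditioning on the independent $\bmR$ and converting the $\psi_{1}$ bound to an $\ellP{\cdot}$ bound are sound bookkeeping that the paper handles implicitly.
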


\Cref{thm:DCSBM:noise:recovery} immediately implies that various modes of convergence hold for
$\opnorm{\bmR \circ \bmL_{\alpha}} - \opnorm{\btlN_{\alpha}} \to 0$
in the large-network limit $n \to \infty$.
Notably, convergence in $L_{1}$ holds (as a special case) and almost sure convergence holds (by a standard Borel--Cantelli argument), both of which imply convergence in probability and hence also convergence in distribution.
Similarly, the moment control in \Cref{thm:DCSBM:noise:recovery} immediately implies a standard high-probability bound by, for example, applying Markov's inequality.

A key feature of \cref{thm:DCSBM:noise:recovery} is that it holds for any $\alpha \in \bbR$.
As such, NetFlipPA provides practitioners flexibility in choosing $\alpha$, namely in their choice of how aggressively to degree-normalize the data.
Notably, downstream of selecting the embedding dimension, the choice of $\alpha$ influences community detection, namely recovery of node community memberships as encoded in informative eigenvectors of the \normadj matrix.
This aspect, including the problem of optimizing $\alpha$ was studied in \cite{ali2017improved} for the setting where the degree heterogeneity parameters $\{q_{i}\}_{i=1}^{n}$ are independent and identically distributed.

Along similar lines, \cref{thm:DCSBM:noise:recovery} does not require explicit assumptions on the population eigenvectors, i.e., on the eigenvectors of \cref{eq:model:DCSBM:expectedA}.
As such, NetFlipPA is applicable even when the community sizes are very imbalanced.
In models with low-rank population structure and diverging spiked eigenvalues, such an imbalance would typically lead to spectral localization (i.e., mass concentrating on one or several entries of individual eigenvectors) and in turn to poor performance for spectral clustering.
However, in the present setting with non-diverging spiked eigenvalues, eigenvector delocalization still holds for the family of \normadj matrices we consider, as established by \cref{thm:DCSBM:approx:L:signflip:decay}.
Indeed, selecting the correct embedding dimension is in some sense a less stringent problem than obtaining clustering guarantees, and it does not rely on the amount of community membership information encoded in the extreme population eigenvectors.
Prior work in \cite{ali2016performance}, which considers the DCSBM setting from \cite{ali2017improved} and choice $\alpha = 1$, studies properties of informative leading eigenvectors for establishing clustering recovery guarantees but does not consider the problem of dimension selection.
Indeed, our work complements \cite{ali2016performance} which derives the phase transition where eigenvector-based clustering becomes asymptotically possible, determines the alignment between dominant eigenvectors and block membership vectors, and explores the implications of conjectured component-wise eigenvector asymptotic normality, the latter of which is further investigated for SBMs in \cite{kadavankandy2019agf}.

\section{Related Work}
\label{sec:literature}

The present paper lies at the intersection of statistical network analysis and rank estimation for low-rank models.
This section reviews related work coming from these areas.

\subsection{Network Estimation and Recovery Under Random Graph Models}
\label{sec:literature:networks}

Selecting the embedding dimension in large heterogeneous networks is closely related to network estimation and recovery problems studied in random graph models.
These problems are often formulated as seeking to infer properties of a latent `population network' or `expected network' (e.g., community memberships, the number of communities, or probabilities of edge occurrence) given access only to noisy data \cite{abbe2018community,athreya2018statistical}.
Broadly speaking, the difficulty of these problems is influenced by the interaction of various modeling aspects, namely
the choice of network representation,
the amount of graph sparsity,
the extent of degree heterogeneity,
and the signal strength as reflected in the spectral properties of the population network.
Here, we elaborate upon similarities and differences between the modeling specifications in this paper and those of related works in the literature.

The NetFlipPA methodology developed in this paper applies broadly to a large family of network representations $\{\bmL_{\alpha} : \alpha \in \bbR\}$ and as such is not limited to a specific choice.
Historically, a more traditional approach when using perturbation analysis to study large random graphs has been to consider a single, specific choice of network representation, with the (unnormalized) adjacency matrix often being simpler and easier to analyze than graph Laplacians \cite{tang2018limit,ke2025optimal}.
The study of families of generalized adjacency or Laplacian matrices appears to be a relatively recent development in statistical machine learning, including and dating back to at least \cite{ali2017improved}.
More recently, \cite{ke2025optimal,fan2025asymptotic:arxiv:v1} study spectral-based asymptotics and inference for generalized Laplacians of the form $\widetilde{\bmD}^{-\alpha} \widetilde{\bmA} \widetilde{\bmD}^{-\alpha}$ with $\alpha \ge 0$ for some (possibly regularized) adjacency-type matrix $\widetilde{\bmA}$ and (possibly regularized) degree-type matrix $\widetilde{\bmD}$ but in the different low-rank setting of diverging spiked eigenvalues.

The DCSBM model in the weakly distinguishable community structure regime studied in this paper yields dense graphs, where expected node degrees satisfy $\Omega(n)$.
Broadly speaking, dense graphs are often expected to have the simplest mathematical properties (e.g., perfect clustering is possible), particularly when the population-level expected adjacency matrix has diverging spiked eigenvalues. 
As a result, more attention is often paid to non-dense or (at least somewhat) sparse graphs, namely graphs with expected node degrees that are sub-linear in $n$, e.g., satisfying $\Omega(n^{1/2})$ or $\Omega(\log n)$ or even $\Omega(1)$, since graph sparsity together with the number of nodes controls the amount of information present or available in the data via the number of observable or expected edges, and this can be quantified in the form of recovery guarantees and phase transition phenomena.
A distinguishing feature of this paper is that here the dense graphs have non-diverging network eigenvalues, and strong consistency or perfect recovery of the community memberships is not possible asymptotically, as noted in \cite{ali2017improved,kadavankandy2019agf}.
Consequently, the perturbation analysis used to obtain our main results differs substantially from existing analyses for dense and semi-sparse graph regimes; see for example \cite{athreya2018statistical}.

The DCSBM model studied in this paper permits graphs to exhibit mild degree heterogeneity, enabling node-specific differences in the propensity for edge formation.
Degree heterogeneity is a common modeling assumption in the literature, as are other variants of SBMs allowing more flexible network structure than just pure blocks.
Recently, extreme forms of degree heterogeneity have also been investigated in the literature, though for settings where networks have diverging spiked eigenvalues \cite{ke2025optimal}.

Interestingly, \cref{assump:model:DCSBM} does not impose restrictions on the block sizes, namely the cardinalities $\#\{j : g_{j} = k\}$ for $1 \le k \le K$.
This aspect is in stark contrast to much of the existing literature in statistical network analysis, including \cite{ali2017improved}, where positing relatively balanced block sizes is needed in order for stated estimation results to hold.
Furthermore, \cref{assump:model:DCSBM} does not restrict the relationship between the number of blocks $K$ and the number of nodes $n$, nor does it impose an explicit rank assumption on the expected adjacency matrix.
This can be understood by virtue of the fact that correctly selecting the embedding dimension is different from and antecedent to correctly estimating community memberships or edge probabilities.
For more on the distinction between the number of node communities versus matrix rank in network models and downstream statistical implications, see \cite{tang2022asymptotically,xie2023efficient}.

Finally, we note that selecting the embedding dimension is also related to the problem of estimating the number of communities $K$ in network data (though they are different problems as discussed in \cref{sec:netflipPA}).
This topic has received considerable attention in the literature for stochastic blockmodels and their generalizations, which beyond the citations and discussion provided earlier in \cref{sec:introduction} include \cite{chen2021estimating:arxiv:v1,ma2021determining} and the references therein.
As noted there, existing works generally fall into one of three categories: spectral-based, cross-validation-based, and likelihood-based.
To the best of our knowledge, none of these existing works adopt a parallel analysis perspective or accommodate the setting specified in \cref{sec:theory:model}.

\subsection{Rank Estimation for Principal Component Analysis and Factor Analysis}
\label{sec:literature:rankestimation}

Selecting the embedding dimension in large heterogeneous networks is closely related to the general problem of rank estimation in low-rank signal-plus-noise models, which has been heavily studied in the context of principal component analysis (PCA) and factor analysis (FA).
Classical, well-established methods include tests based on likelihood ratios~\cite{bartlett1954chisquare,lawley1956test}, the Kaiser--Guttman criterion~\cite{guttman1954snc,kaiser1960factor}, the scree plot~\cite{cattell1966scree,cattell1977scree,zhu2006automatic}, parallel analysis~\cite{horn1965, buja1992rop}, and methods based on information theoretic criteria~\cite{wax1985detection, fishler2002detection}, to name just a few.
As in our network setting, selecting too small a rank in PCA or FA risks losing potentially important information in the data while selecting too large a rank risks keeping too much of the noise, both of which can in turn deteriorate the performance of downstream analyses.

Recent years have witnessed significant ongoing interest in the problem of rank estimation for PCA and FA, with a particular focus on high-dimensional settings where both the row and column dimensions of the data matrix are large and may go to infinity.
These settings are applicable for many modern datasets in which the number of samples is comparable to the number of features, rather than having far more samples than features as is assumed in classical asymptotic analyses.
Various new and notable phenomena arise in high-dimensional settings that can impact rank estimation, such as eigenvalue spreading, bias, and universality; see \cite{johnstone2018pih} for a recent survey.
A number of recent works use new theoretical insights about high-dimensional phenomena to re-visit (and sometimes adapt) classical methods as well as to develop new rank estimation methods.
One area of work has been analyzing methods based on various information criteria and appropriately modifying them for high-dimensional settings~\cite{bai2002determining,nadakuditi2008sample,alessi2010factor,bai2018consistency,hu2020lla:arxiv:v1}.
Likewise, variants of parallel analysis have been analyzed and developed for high-dimensional settings~\cite{dobriban2017permutation,dobriban2018dpa}.
Cross-validation approaches have also been investigated, including bi-cross-validation~\cite{owen2016bicross} and double cross validation~\cite{zeng2019double:arxiv:v1}.
Finally, a major area of work has been in developing techniques that exploit the asymptotic behavior of eigenvalues, their consecutive differences, or their consecutive ratios in high-dimensional regimes~\cite{kapetanios2004factor,kapetanios2010factor,kritchman2009non,onatski2010factor,lam2012factor,ahn2013eigenvalue,passemier2014estimation,li2017identifying,fan2020eno,xu2022eigenvalue}.

Several recent works have also begun to develop techniques for data that are not only high-dimensional but also have heteroscedastic noise, which is common in modern settings.
Indeed, the network data we consider are heteroscedastic because the variances of the Bernoulli entries in the adjacency matrix are functions of the corresponding edge probabilities.
The same occurs in general for Bernoulli or Poisson data, which are common in modern binary and count data.
Moreover, heteroscedastic noise arises naturally in the modern setting where datasets are formed by combining data from multiple sources that may differ in their level of quality.
Methods developed for homoscedastic noise can degrade dramatically when the noise is heteroscedastic, motivating the need for new methods tailored to heteroscedastic settings.
For example, heteroscedasticity in the noise can significantly degrade the quality of the eigenvectors of the sample covariance matrix, and a number of recent works study how to appropriately modify PCA to improve its performance~\cite{zhang2022hpa,leeb2021oss,leeb2021mdf,hong2016tat,hong2018asymptotic,hong2023owp,hong2021hpp,gilman2025shp,cavazos2025asl}.

Heteroscedasticity in the noise can also significantly impact rank estimation.
For example, \cite[Section~3.2]{hong2020stn:arxiv:v4} shows how permutation-based parallel analysis can degrade dramatically in heteroscedastic settings.
One approach to rank estimation for heteroscedastic settings has been to develop techniques that appropriately whiten the data to make the noise behave as though it were homoscedastic: \cite{landa2022brt} proposes a technique for whitening the noise in count data, and \cite{landa2025tde} proposes a technique based on random matrix theoretic properties of the Dyson equation.
Beyond whitening approaches, \cite{donoho2023sem} proposes a data-adaptive random matrix theoretic threshold that seeks to minimize the mean-square error of the low-rank matrix estimate obtained after thresholding.
Data-adaptive random matrix theoretic thresholding techniques have also been developed for settings where the noise variances are themselves Gamma distributed \cite{ke2021eot}.
Yet another, different approach, developed in \cite{han2023universal}, is to test for matrix rank on the basis of residual subsampling, which is provably effective in both network and non-network heteroscedastic models that have diverging spiked eigenvalues.

Most closely related to this paper is \cite{hong2020stn:arxiv:v4} which develops a parallel analysis variant for non-network data with heteroscedastic noise based on random signflips.
In contrast to that work, the network data we consider are symmetric matrices that can have both positive and negative eigenvalues of interest and the noise entries do not have symmetric distributions.
Consequently, the theoretical guarantees therein cannot be applied directly.
Moreover, their consistency results assume asymptotic signal delocalization and the existence of a well-defined asymptotic spectral noise floor.
In this paper, we instead perform a non-asymptotic perturbation analysis that shows these properties occur in our DCSBM setting and yields corresponding convergence rates.
For these reasons, the theoretical techniques used to obtain our main results differ substantially from the analyses in \cite{hong2020stn:arxiv:v4}.

\section{Conclusion}
\label{sec:conclusion}

This paper develops a novel spectral approach to selecting the embedding dimension for network data that applies to a broad class of \normadj matrices.
The proposed algorithm, network signflip parallel analysis (NetFlipPA), compares the eigenvalues of the given matrix to the operator norms obtained after multiple trials of randomly signflipping its entries.
NetFlipPA is interpretable, simple to implement, and does not require users to carefully tune parameters.

This paper analyzes NetFlipPA in the challenging setting of large degree-corrected stochastic blockmodel random graphs with weakly distinguishable community structure, where the network eigenvalues corresponding to community structure do not diverge away from those corresponding to randomness in the network.
Proper selection in this setting requires careful calibration to keep only the network eigenvalues whose magnitudes rise above the spectral noise floor coming from randomness in the network, since the network eigenvalues with magnitudes below the spectral noise floor are likely to have uninformative corresponding eigenvectors.
One does not typically know the spectral noise floor \emph{a priori}, and high-probability upper bounds such as those obtained via high-dimensional probability can be overly conservative, making this a challenging problem and motivating our work on NetFlipPA.
This paper shows that NetFlipPA provably recovers the spectral noise floor (\cref{thm:DCSBM:noise:recovery}) and thus provides a rigorous approach to selecting the embedding dimension.
Proving our main result involves a nontrivial non-asymptotic analysis to first characterize the relevant signal and noise structure in the \normadj matrix (\cref{thm:DCSBM:approx:L:signflip:decay}) and then to quantify the effect of signflipping on the signal (\cref{thm:DCSBM:signal:signflip:destroy}) and the noise (\cref{thm:DCSBM:noise:signflip:preserve}).
Our intermediate results provide a refined non-asymptotic characterization of the \normadj matrix that may be of independent interest.

A natural direction for future work is to develop a better practical understanding of how to choose the quantile $\quantile$ and number of trials $T$ in NetFlipPA.
While the choice of $\quantile$ and $T$ typically do not have a major impact for large networks (as illustrated in \cref{sec:qT:experiment}), they can significantly impact the trade-off between Type I error and statistical power for smaller networks.
Another direction for future work is to develop techniques for networks that contain both weakly distinguishable community structure and strongly distinguishable community structure.
Strongly distinguishable community structure can produce very large signal eigenvalues that are not easily destroyed by signflipping, which can in turn lead to over-estimation of the spectral noise floor and cause weakly distinguishable community structure to be overlooked.
This problem is a challenge for parallel analysis methods in general, where it often goes under the name ``shadowing''.
It would also be interesting to study the performance of NetFlipPA in the context of moderately sparse graphs, possibly exhibiting moderate or severe degree heterogeneity.
In such settings, it is expected that augmenting NetFlipPA with graph regularization techniques would be needed to ensure underlying high-dimensional concentration and thus accurate performance (e.g., see \cite{le2017concentration, zhang2024fundamental}).
Finally, given recent renewed interest in signed networks \cite{cucuringu2019sponge,feng2020testing,pensky2024signed:arxiv:v2,tang2025population}, it might also be interesting to extend NetFlipPA to signed networks, or to more general edge-weighted networks.

\bibliographystyle{IEEEtran}
\bibliography{refs-arxiv}

\clearpage
\appendices
\crefalias{section}{appendix}

\section{Supporting Lemmas}

Here, we state and prove several concentration inequalities that will be used in the proofs of our main results.

\begin{lemma}
    \label{lem:max:subgaussian}
    Let $X_{1},X_{2},\dots,X_{n}$ be sub-Gaussian random variables
    (not necessarily independent),
    where $n \geq 2$.
    Then, $\max_{i=1,\dots,n} |X_{i}|$ is also sub-Gaussian
    with
    \begin{equation*}
        \left\| \max_{i=1,\dots,n} |X_{i}| \right\|_{\psi_{2}}
        \leq
        C \left( \max_{i=1,\dots,n} \| X_{i} \|_{\psi_{2}} \right) \sqrt{\log n}
        ,
    \end{equation*}
    where $C > 0$ is a universal constant.
\end{lemma}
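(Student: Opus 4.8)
The plan is to reduce everything to the moment description of the sub-Gaussian norm, where a union bound over $n$ variables costs only the harmless factor $n^{1/p}$ once the moment order satisfies $p \gtrsim \log n$.

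First I would recall the standard two-sided equivalence between the $\psi_{2}$ norm and moment growth: there are universal constants $c_{1},c_{2} > 0$ such that, for every random variable $Z$,
\begin{equation*}
    c_{1} \sup_{p \geq 1} \frac{ \bbE[|Z|^{p}]^{1/p} }{ \sqrt{p} }
    \;\leq\;
    \| Z \|_{\psi_{2}}
    \;\leq\;
    c_{2} \sup_{p \geq 1} \frac{ \bbE[|Z|^{p}]^{1/p} }{ \sqrt{p} }
    .
\end{equation*}
Setting $K \coloneqq \max_{i} \| X_{i} \|_{\psi_{2}}$ and applying the left-hand bound to each $X_{i}$ gives $\bbE[|X_{i}|^{p}]^{1/p} \leq c_{1}^{-1} K \sqrt{p}$ for all $p \geq 1$ and all $i$. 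A union bound carried out at the level of moments then yields, for every $p \geq 1$,
\begin{equation*}
    \bbE\!\left[ \max_{i} |X_{i}|^{p} \right]
    \leq
    \sum_{i=1}^{n} \bbE\bigl[ |X_{i}|^{p} \bigr]
    \leq
    n \max_{i} \bbE\bigl[ |X_{i}|^{p} \bigr]
    ,
\end{equation*}
hence $\bbE[ \max_{i} |X_{i}|^{p} ]^{1/p} \leq n^{1/p} \max_{i} \bbE[|X_{i}|^{p}]^{1/p} \leq c_{1}^{-1}\, n^{1/p} K \sqrt{p}$.

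The crucial step is to optimize the moment order. Since $n^{1/p} = e^{(\log n)/p} \leq e$ whenever $p \geq \log n$, for all such $p$ we get $\bbE[\max_{i}|X_{i}|^{p}]^{1/p} / \sqrt{p} \leq e\,c_{1}^{-1} K$. For the remaining range $1 \leq p \leq \log n$ (which is nonempty only when $n \geq 3$), I would invoke monotonicity of $L_{p}$ norms on a probability space to bound $\bbE[\max_{i}|X_{i}|^{p}]^{1/p} \leq \bbE[\max_{i}|X_{i}|^{\log n}]^{1/\log n} \leq e\,c_{1}^{-1} K \sqrt{\log n}$ and then divide by $\sqrt{p} \geq 1$. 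Since $n \geq 2$ forces $\sqrt{\log n} \geq \sqrt{\log 2}$, both estimates are at most $C' K \sqrt{\log n}$ for a universal constant $C'$, so $\sup_{p \geq 1} \bbE[\max_{i}|X_{i}|^{p}]^{1/p} / \sqrt{p} \leq C' K \sqrt{\log n}$. Feeding this into the right-hand side of the equivalence with $Z = \max_{i}|X_{i}|$ proves the lemma with $C = c_{2} C'$.

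The one genuine obstacle is getting the $\sqrt{\log n}$ rate rather than the $\sqrt{n}$ one obtains from the naive moment-generating-function union bound $\bbE[e^{\max_{i} X_{i}^{2}/t^{2}}] \leq \sum_{i} \bbE[e^{X_{i}^{2}/t^{2}}]$, whose right-hand side is always at least $n$; the remedy is precisely to perform the union bound on $L_{p}$ norms, where the cost appears as $n^{1/p}$, and then take $p \asymp \log n$, the low-moment range being handled for free by norm monotonicity. Everything else is routine bookkeeping of universal constants. An equivalent moment-free route would instead union-bound the sub-Gaussian tails $\mathbb{P}(|X_{i}| > u) \leq 2 e^{-c u^{2}/K^{2}}$ and integrate $\bbE[e^{\max_{i} X_{i}^{2}/t^{2}}] = 1 + \int_{0}^{\infty} (2u/t^{2})\, e^{u^{2}/t^{2}}\, \mathbb{P}(\max_{i}|X_{i}| > u)\, du$, splitting at $u_{0} \asymp K\sqrt{\log n}$ (where the union bound stops improving on the trivial bound $1$) and choosing $t \asymp K\sqrt{\log n}$ large enough to make the total at most $2$.
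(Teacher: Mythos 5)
Your proposal is correct and takes essentially the same route as the paper: both arguments work through the moment characterization of the $\psi_{2}$ norm and perform the union bound at the level of $L_{p}$ moments of order roughly $\log n$, which is exactly where the $\sqrt{\log n}$ factor comes from. The only difference is bookkeeping—the paper boosts every moment uniformly by bounding the maximum with the $\ell_{\log_{2} n}$ norm and applying Jensen's inequality (so the union-bound cost is $2^{1/p}\le 2$), whereas you split at $p \asymp \log n$ and handle the low-moment range by monotonicity of $L_{p}$ norms—and both versions are sound.
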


\Cref{lem:max:subgaussian} is a special case of
\cite[Lemma 2.2.2]{vaart1996wca};
here we provide an alternative elementary proof.

\begin{proof}[Proof of \cref{lem:max:subgaussian}]
    We prove the lemma by showing that
    \begin{equation*}
        \forall_{p \geq 1}
        \qquad
        \left(
            \bbE \left|
                \max_{i=1,\dots,n} |X_{i}|
            \right|^{p}
        \right)^{1/p}
        \leq
        \left[
            C \left( \max_{i=1,\dots,n} \| X_{i} \|_{\psi_{2}} \right) \sqrt{\log n}
        \right]
        \sqrt{p}
        .
    \end{equation*}
    Let $p \geq 1$ be arbitrary.
    Then,
    defining $\ell \coloneqq \log_2 n$
    yields
    \begin{equation*}
        \left| \max_{i=1,\dots,n} |X_{i}| \right|^{p}
        =
        \max_{i=1,\dots,n} |X_{i}|^{p}
        =
        \begin{Vmatrix}
            |X_{1}|^{p} \\ \vdots \\ |X_{n}|^{p}
        \end{Vmatrix}_{\infty}
        \leq
        \begin{Vmatrix}
            |X_{1}|^{p} \\ \vdots \\ |X_{n}|^{p}
        \end{Vmatrix}_{\ell}
        =
        \left(
            \sum_{i=1}^{n} \big| |X_{i}|^{p} \big|^{\ell}
        \right)^{1/\ell}
        =
        \left(
            \sum_{i=1}^{n} |X_{i}|^{p\ell}
        \right)^{1/\ell}
        .
    \end{equation*}
    Next,
    note that
    $f(x) = x^{1/\ell}$
    is concave over $x \geq 0$
    (since $\ell \geq 1$ for $n \geq 2$),
    so Jensen's inequality yields
    \begin{equation*}
        \bbE \left[
            \left(
                \sum_{i=1}^{n} |X_{i}|^{p\ell}
            \right)^{1/\ell}
        \right]
        \leq
        \left(
            \bbE \left[
                \sum_{i=1}^{n} |X_{i}|^{p\ell}
            \right]
        \right)^{1/\ell}
        =
        \left(
            \sum_{i=1}^{n} \bbE \Big[ |X_{i}|^{p\ell} \Big]
        \right)^{1/\ell}
        ,
    \end{equation*}
    and thus
    \begin{equation*}
        \left(
            \bbE \left|
                \max_{i=1,\dots,n} |X_{i}|
            \right|^{p}
        \right)^{1/p}
        \leq
        \left(
            \bbE \left[
                \left(
                    \sum_{i=1}^{n} |X_{i}|^{p\ell}
                \right)^{1/\ell}
            \right]
        \right)^{1/p}
        \leq
        \left(
            \sum_{i=1}^{n} \bbE \Big[ |X_{i}|^{p\ell} \Big]
        \right)^{1/(p\ell)}
        .
    \end{equation*}
    Next, note that
    \begin{equation*}
        \left(
            \sum_{i=1}^{n} \bbE \Big[ |X_{i}|^{p\ell} \Big]
        \right)^{1/(p\ell)}
        \leq
        \left(
            n
            \max_{i=1,\dots,n} \bbE \Big[ |X_{i}|^{p\ell} \Big]
        \right)^{1/(p\ell)}
        \leq
        2
        \max_{i=1,\dots,n} \left( \bbE \Big[ |X_{i}|^{p\ell} \Big] \right)^{1/(p\ell)}
        ,
    \end{equation*}
    since
    $n^{1/(p\ell)} = 2^{\log_2(n^{1/(p\ell)})} = 2^{(1/(p\ell))\log_2 n} = 2^{1/p} \leq 2$.

    Finally,
    since $p\ell \geq 1$ and each $X_{i}$ is sub-Gaussian,
    we have
    \begin{equation*}
        \left( \bbE \Big[ |X_{i}|^{p\ell} \Big] \right)^{1/(p\ell)}
        \leq
        \tlC \| X_{i} \|_{\psi_{2}} \sqrt{p\ell}
        =
        \tlC \| X_{i} \|_{\psi_{2}} \sqrt{p \log_2 n}
        =
        \frac{\tlC}{\sqrt{\log 2}} \| X_{i} \|_{\psi_{2}} \sqrt{\log n} \sqrt{p}
        ,
    \end{equation*}
    where $\tlC > 0$ is a universal constant,
    and so
    \begin{equation*}
        \left(
            \bbE \left|
                \max_{i=1,\dots,n} |X_{i}|
            \right|^{p}
        \right)^{1/p}
        \leq
        \left(
            \sum_{i=1}^{n} \bbE \Big[ |X_{i}|^{p\ell} \Big]
        \right)^{1/(p\ell)}
        \leq
        2
        \max_{i=1,\dots,n} \frac{\tlC}{\sqrt{\log 2}} \| X_{i} \|_{\psi_{2}} \sqrt{\log n} \sqrt{p}
        .
    \end{equation*}
    Rearranging and defining $C \coloneqq 2 \tlC / \sqrt{\log 2}$ concludes the proof.
\end{proof}

\begin{lemma}
    \label{lem:opnorm:subgaussian}
    Let $\bmX \in \bbR^{n \times n}$ be a symmetric random matrix
    whose entries on and above the diagonal are independent mean-zero sub-Gaussian
    random variables.
    Then, $\|\bmX\|_{\op}$ is also sub-Gaussian with
    \begin{equation*}
        \big\| \|\bmX\|_{\op} \big\|_{\psi_{2}}
        \leq
        C \left( \max_{i,j=1,\dots,n} \| X_{ij} \|_{\psi_{2}} \right) \sqrt{n}
        ,
    \end{equation*}
    where $C > 0$ is a universal constant.
\end{lemma}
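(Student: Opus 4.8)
The plan is to reduce the operator norm to a maximum over a finite net of the unit sphere and then invoke the sub-Gaussian maximal inequality of \cref{lem:max:subgaussian}. Since $\bmX$ is symmetric, $\opnorm{\bmX} = \sup_{\|\bmx\|_2 = 1} |\bmx'\bmX\bmx|$, and a standard covering argument gives $\opnorm{\bmX} \le 2 \max_{\bmx \in \mathcal{N}} |\bmx'\bmX\bmx|$ whenever $\mathcal{N}$ is a $(1/4)$-net of the unit sphere in $\bbR^n$; moreover $\mathcal{N}$ can be chosen with $|\mathcal{N}| \le 9^n$. Thus it suffices to control the $\psi_{2}$ norm of $\max_{\bmx \in \mathcal{N}} |\bmx'\bmX\bmx|$.

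The second step is a uniform bound on $\subG{\bmx'\bmX\bmx}$ over $\|\bmx\|_2 = 1$. The key observation is that, although $\bmx'\bmX\bmx$ is quadratic in $\bmx$, it is \emph{linear} in the independent entries $\{X_{ij} : i \le j\}$: writing $\bmx'\bmX\bmx = \sum_{i} X_{ii} x_i^2 + 2\sum_{i<j} X_{ij} x_i x_j$ exhibits it as a weighted sum of independent mean-zero sub-Gaussian variables. Hence its squared $\psi_{2}$ norm is at most a constant times $K^2\big(\sum_i x_i^4 + 4\sum_{i<j} x_i^2 x_j^2\big)$, where $K \coloneqq \max_{i,j} \subG{X_{ij}}$, and since $\sum_i x_i^4 + 4\sum_{i<j} x_i^2 x_j^2 = 1 + 2\sum_{i<j} x_i^2 x_j^2 \le 2$, we obtain $\subG{\bmx'\bmX\bmx} \lesssim K$ uniformly over the sphere (and in particular over $\mathcal{N}$).

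Finally, \cref{lem:max:subgaussian} applied to the at most $9^n$ sub-Gaussian random variables $\{\bmx'\bmX\bmx : \bmx \in \mathcal{N}\}$ (which need not be independent, as the lemma does not require it) gives $\subG{\max_{\bmx \in \mathcal{N}} |\bmx'\bmX\bmx|} \lesssim K \sqrt{n \log 9} \asymp K\sqrt{n}$. Combining with the net bound $\opnorm{\bmX} \le 2\max_{\bmx\in\mathcal{N}} |\bmx'\bmX\bmx|$ yields $\subG{\opnorm{\bmX}} \lesssim K\sqrt{n}$, which is the claimed inequality after absorbing the absolute constants into $C$.

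The two substantive steps --- the net reduction of the operator norm and the sub-Gaussian-sum estimate for $\bmx'\bmX\bmx$ --- are both standard (cf.\ Vershynin's high-dimensional probability monograph). The point requiring the most care is recognizing that $\bmx'\bmX\bmx$ should be treated as a linear form in the independent entries $X_{ij}$ rather than as a generic quadratic form in $\bmx$ (which would only be sub-exponential by Hanson--Wright), so that one genuinely obtains a $\psi_{2}$ bound rather than a $\psi_{1}$ bound; the diagonal terms $X_{ii}$ must also be kept separate in this accounting. Beyond that, the only bookkeeping is checking that the $9^n$ cardinality of the net feeds through the $\sqrt{\log(\cdot)}$ factor in \cref{lem:max:subgaussian} to produce precisely the $\sqrt{n}$ scaling.
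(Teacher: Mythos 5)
Your proof is correct, and it takes a mildly but genuinely different route from the paper's. The paper first decomposes $\bmX = \bmX^{+} + \bmX^{-}$ into its upper- and lower-triangular parts so that each part has fully independent entries, then applies the bilinear-form net bound $\|\bmX^{\pm}\|_{\op} \le 2\max_{\bmu,\bmv \in \clN} \bmu'\bmX^{\pm}\bmv$ over \emph{pairs} of net points (so $|\clN|^{2} \le 81^{n}$ variables enter \cref{lem:max:subgaussian}), bounds each $\bmu'\bmX^{\pm}\bmv$ as a sub-Gaussian linear form, and finally recombines via the triangle inequality, paying a harmless factor of two. You instead keep $\bmX$ whole, use the symmetric quadratic-form version of the net bound $\|\bmX\|_{\op} \le 2\max_{\bmx \in \clN}|\bmx'\bmX\bmx|$, and observe that $\bmx'\bmX\bmx = \sum_{i} X_{ii}x_i^2 + 2\sum_{i<j}X_{ij}x_ix_j$ is already a linear form in the independent entries on and above the diagonal, with coefficient vector of squared $\ell_2$ norm at most $2$; this is exactly the point that lets you stay at the $\psi_2$ level rather than falling into a Hanson--Wright $\psi_1$ bound, and your handling of the diagonal terms is right. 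The two arguments use the same three ingredients (net reduction, sub-Gaussian Hoeffding-type summation, and \cref{lem:max:subgaussian}, which indeed does not need independence), and both yield the same $C K \sqrt{n}$ conclusion; your version is slightly leaner in that it avoids the triangular splitting and only needs a single net of size $9^n$ (with $\sqrt{\log 9^n} \asymp \sqrt{n}$), at the cost of invoking the symmetric-matrix quadratic-form net inequality, which requires the extreme eigenvector argument rather than the generic bilinear one.
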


The statement and proof of \cref{lem:opnorm:subgaussian} amount to a slight reformulation of \cite[Theorem 4.4.5 and Corollary 4.4.8]{vershynin2018hdp};
we provide a proof here for the reader's convenience.

\begin{proof}[Proof of \cref{lem:opnorm:subgaussian}]
    By the triangle inequality,
    \begin{equation*}
        \big\| \| \bmX \|_{\op} \big\|_{\psi_{2}}
        = \big\| \| \bmX^+ + \bmX^- \|_{\op} \big\|_{\psi_{2}}
        \leq \big\| \| \bmX^+ \|_{\op} + \| \bmX^- \|_{\op} \big\|_{\psi_{2}}
        \leq \big\| \| \bmX^+ \|_{\op} \big\|_{\psi_{2}} + \big\| \| \bmX^- \|_{\op} \big\|_{\psi_{2}}
        ,
    \end{equation*}
    where
    $\bmX^+ \in \bbR^{n \times n}$ is the upper-triangular part of $\bmX$
    and
    $\bmX^- \in \bbR^{n \times n}$ is the lower-triangular part of $\bmX$,
    i.e.,
    \begin{align*}
        X^+_{ij}
        &
        \coloneqq
        \begin{cases}
            X_{ij}  & \text{if } i \leq j , \\
            0       & \text{otherwise} ,
        \end{cases}
        &
        X^-_{ij}
        &
        \coloneqq
        \begin{cases}
            X_{ij}  & \text{if } i > j , \\
            0       & \text{otherwise} .
        \end{cases}
    \end{align*}
    The problem thus reduces to bounding
    $\big\| \| \bmX^+ \|_{\op} \big\|_{\psi_{2}}$
    and
    $\big\| \| \bmX^- \|_{\op} \big\|_{\psi_{2}}$.

    Let $\clN$ be an $\varepsilon$-net of the sphere $\bbS^{n-1}$
    with $\varepsilon = 1/4$ and cardinality $|\clN| \leq 9^n$.
    It follows that (see, e.g., \cite[Exercise 4.4.3]{vershynin2018hdp})
    \begin{equation*}
        \|\bmX^+\|_{\op}
        \leq
        2 \max_{\bmu, \bmv \in \clN} \bmu' \bmX^+ \bmv
        =
        2 \max_{\bmu, \bmv \in \clN} \sum_{i,j=1}^n X^+_{ij} u_i v_j
        .
    \end{equation*}
    Note that the entries of $\bmX^+$
    are independent sub-Gaussian random variables
    (since it only includes the upper-triangular entries),
    so
    \begin{align*}
        \left\| \sum_{i,j=1}^n X^+_{ij} u_i v_j \right\|_{\psi_2}^2
        &
        \leq
        \tlC_1 \sum_{i,j=1}^n \left\| X^+_{ij} u_i v_j \right\|_{\psi_2}^2
        =
        \tlC_1 \sum_{i,j=1}^n \left\| X^+_{ij} \right\|_{\psi_2}^2 u_i^2 v_j^2
        \\&
        \leq
        \tlC_1
        \max_{i,j = 1,\dots,n} \left\| X^+_{ij} \right\|_{\psi_2}^2
        \sum_{i,j=1}^n u_i^2 v_j^2
        =
        \tlC_1
        \max_{i,j = 1,\dots,n} \left\| X^+_{ij} \right\|_{\psi_2}^2
        \\&
        \leq
        \tlC_1
        \left(\max_{i,j = 1,\dots,n} \left\| X_{ij} \right\|_{\psi_2}^2\right)
        =
        \tlC_1
        \left(\max_{i,j = 1,\dots,n} \left\| X_{ij} \right\|_{\psi_2}\right)^2
        ,
    \end{align*}
    where $\tlC_1 > 0$ is a universal constant.

    Applying \cref{lem:max:subgaussian} then yields
    \begin{align*}
        \big\| \|\bmX^+\|_{\op} \big\|_{\psi_{2}}
        &
        \leq
        2
        \left\|
            \max_{\bmu, \bmv \in \clN} \sum_{i,j=1}^n X^+_{ij} u_i v_j
        \right\|_{\psi_{2}}
        \leq
        2
        \tlC_2
        \left(
            \max_{\bmu, \bmv \in \clN}
            \left\| \sum_{i,j=1}^n X^+_{ij} u_i v_j \right\|_{\psi_{2}}
        \right)
        \sqrt{\log(|\clN|^2)}
        \\&
        \leq
        2
        \tlC_2
        \left(
            \sqrt{\tlC_1}
            \left(\max_{i,j = 1,\dots,n} \left\| X_{ij} \right\|_{\psi_2}\right)
        \right)
        \sqrt{\log(|\clN|^2)}
        \\&
        =
        \frac{C}{2}
        \left(\max_{i,j = 1,\dots,n} \left\| X_{ij} \right\|_{\psi_2}\right)
        \sqrt{n}
        ,
    \end{align*}
    where $\tlC_2 > 0$ is the universal constant from \cref{lem:max:subgaussian},
    and $C/2$ collects all the universal constants.
    Repeating this calculation for $\bmX^-$
    and combining the results completes the proof.
\end{proof}

\section{Proof of \cref{thm:DCSBM:approx:L:signflip:decay}}
\label{proof:DCSBM:approx:L:signflip:decay}

We use a similar high-level progression of ideas as in \cite[Section 6.1]{ali2017improved} but develop bounds on the statistical moments instead of high probability bounds.
Specifically, the following sections of this \lcnamecref{proof:DCSBM:approx:L:signflip:decay} systematically characterize the terms in the \normadj matrix $\bmL_\alpha$, which lead us in the end to the desired characterization of $\bmL_\alpha$ itself.

Before we begin, we note (as in \cite[Equations~(7) and~(8)]{ali2017improved}) that the adjacency matrix can be written as
\begin{equation}
    \frac{1}{\sqrt{n}} \bmA
    =
    \frac{1}{\sqrt{n}}\bmq\bmq'
    +
    \frac{1}{n}
    \bmB
    +
    \frac{1}{\sqrt{n}}
    \bmX
    ,
\end{equation}
where $\bmX$ is a symmetric random matrix whose upper triangular entries are independent, mean zero, uniformly bounded random variables,
and we have defined $\bmB \coloneqq \bmD_{q} \bmJ \bmM \bmJ' \bmD_{q}$.
Similar to \cite[Equation~(9)]{ali2017improved}, we also decompose the degree vector $\bmd$ as
\begin{equation}
    \label{eq:expand:bmd}
    \bmd
    =
    \bmq \bmq' \bm1_{n}
    +
    \frac{1}{\sqrt{n}}\bmB\bm1_{n}
    +
    \bmX\bm1_{n}
    =
    \bmq'\bm1_{n}
    \left(
        \bmq
        +
        \frac{1}{\sqrt{n}}
        \frac{\bmB\bm1_{n}}{\bmq'\bm1_{n}}
        +
        \frac{\bmX\bm1_{n}}{\bmq'\bm1_{n}}
    \right)
    ,
\end{equation}
where we note that
\begin{align}
    \label{eq:bound_prelim_observation}
    \|\bmq\|_{\infty}
    &\asymp
    1
    ,
    &
    \left\|
        \frac{1}{\sqrt{n}}
        \frac{\bmB \bm1_{n}}{\bmq'\bm1_{n}}
    \right\|_{\infty}
    &\asymp
    \frac{1}{\sqrt{n}}
    ,
    &
    \big\| \|\bmX\bm1_{n}\|_{\infty} \big\|_{\psi_{2}}
    &\lesssim
    \sqrt{n \cdot \log n}
    .
\end{align}
The first expression in \cref{eq:bound_prelim_observation} holds since $0 < \qmin \le q_{i} \le \qmax < 1$ almost surely for all $1 \le i \le n$.
The second expression holds since simultaneously $\| \bmB \bm1_{n} \|_{\infty} \asymp n$ and $\bmq' \bm1_{n} \asymp n$.
The third expression holds since $\bmX$ is a matrix of independent (up to symmetry) mean zero random variables that are uniformly bounded, so the entries are each sub-Gaussian random variables with $\|X_{ij}\|_{\psi_{2}} \leq 1/\sqrt{\ln 2}$ by \cite[Example~2.5.8]{vershynin2018hdp}.
Consequently, each entry of the vector $\bmX \bm1_{n}$ is a sum of independent sub-Gaussian random variables, which is itself sub-Gaussian with $\|(\bmX\bm1_{n})_{i}\|_{\psi_{2}} \lesssim \sqrt{n}$ by \cite[Proposition~2.6.1]{vershynin2018hdp}, emphasizing that the implicit constant for the right-hand side does not depend on the index $i$.
Finally, a direct application of \cref{lem:max:subgaussian} yields the stated bound.

\subsection{Characterizing \texorpdfstring{$(\bmd'\bm1_{n})^{\alpha}$}{(d'1)\textasciicircum alpha}}
Similar to \cite[Equation~(10)]{ali2017improved},
we decompose $\bmd'\bm1_{n}$ as
\begin{equation}
    \bmd'\bm1_{n}
    =
    (\bmq'\bm1_{n})^2
    \left[
        1
        +
        \frac{1}{\sqrt{n}}
        \frac{\bm1_{n}'\bmB\bm1_{n}}{(\bmq'\bm1_{n})^2}
        +
        \frac{\bm1_{n}'\bmX\bm1_{n}}{(\bmq'\bm1_{n})^2}
    \right]
    ,
\end{equation}
where here we note that
\begin{align}
    \label{eq:bound_prelim_observation_2}
    \left|
        \frac{1}{\sqrt{n}}
        \frac{\bm1_{n}'\bmB \bm1_{n}}{(\bmq'\bm1_{n})^2}
    \right|
    &\lesssim n^{-1/2}
    , &
    \left\| \frac{\bm1_{n}'\bmX\bm1_{n}}{(\bmq'\bm1_{n})^2} \right\|_{\psi_{2}}
    &\lesssim n^{-1}
    .
\end{align}
The first expression in \cref{eq:bound_prelim_observation_2} holds since $|\bm1_{n}'\bmB \bm1_{n}| \asymp n^{2}$ and $|(\bmq'\bm1_{n})^2| \asymp n^{2}$.
The second expression relies on the further observation that $\| \bm1_{n}'\bmX\bm1_{n} \|_{\psi_{2}} \lesssim n$ by \cite[Proposition~2.5.2(ii) and Proposition~2.6.1]{vershynin2018hdp}.

\newcommand{\highlighted}[1]{{#1}}
\newcommand{\remone}{{\highlighted{r_{1}}}}
\newcommand{\lowone}{{\highlighted{\delta_{1}}}}

Similar to \cite[Equation~(11)]{ali2017improved}, we next apply the Taylor expansion
$f(x) = (1+x)^\alpha = 1 + \alpha x + O(|x|^2)$ centered at $x_0 = 0$
to obtain
\begin{align}
    \label{eq:jmlr:11}
    (\bmd'\bm1_{n})^{\alpha}
    &
    =
    (\bmq'\bm1_{n})^{2\alpha}
    \left(
        1
        +
        \left[
            \frac{1}{\sqrt{n}}
            \frac{\bm1_{n}'\bmB \bm1_{n}}{(\bmq'\bm1_{n})^2}
            +
            \frac{\bm1_{n}'\bmX\bm1_{n}}{(\bmq'\bm1_{n})^2}
        \right]
    \right)^{\alpha}
    \\& \nonumber
    =
    (\bmq'\bm1_{n})^{2\alpha}
    \left(
        1 +
        \alpha
        \left[
            \frac{1}{\sqrt{n}}
            \frac{\bm1_{n}'\bmB \bm1_{n}}{(\bmq'\bm1_{n})^2}
            +
            \frac{\bm1_{n}'\bmX\bm1_{n}}{(\bmq'\bm1_{n})^2}
        \right]
        +
        \remone
    \right),
\end{align}
where the $O(|x|^{2})$ remainder term $\remone$
can be written as
\begin{equation}
    \remone
    =
    \frac{\alpha(\alpha-1)}{2!} (1+c)^{\alpha-2} \cdot x^2
    ,
\end{equation}
where
\begin{equation*}
    x
    =
    \frac{1}{\sqrt{n}}
    \frac{\bm1_{n}'\bmB \bm1_{n}}{(\bmq'\bm1_{n})^2}
    +
    \frac{\bm1_{n}'\bmX\bm1_{n}}{(\bmq'\bm1_{n})^2}
    ,
\end{equation*}
and the (stochastic) term $c$ is between $0$ and $x$.

Bounding $\bbE\{|\remone|^{\mmt}\}$ requires us to address that $(1+c)^{\alpha-2}$ may diverge if $c$ approaches $-1$ depending on the value of $\alpha$.
To resolve this, we apply the law of total expectation to obtain:
\begin{equation}
    \label{eq:remone:loe}
    \bbE\{|\remone|^{\mmt}\}
    = \bbE\left\{ |\remone|^{\mmt} : \frac{|\bm1_{n}'\bmX\bm1_{n}|}{(\bmq'\bm1_{n})^2} \geq \epsilon \right\}
    \Pr\left( \frac{|\bm1_{n}'\bmX\bm1_{n}|}{(\bmq'\bm1_{n})^2} \geq \epsilon \right)
    + \bbE\left\{ |\remone|^{\mmt} : \frac{|\bm1_{n}'\bmX\bm1_{n}|}{(\bmq'\bm1_{n})^2} < \epsilon \right\}
    \Pr\left( \frac{|\bm1_{n}'\bmX\bm1_{n}|}{(\bmq'\bm1_{n})^2} < \epsilon \right)
    ,
\end{equation}
where $\epsilon \in (0,1)$ is arbitrary.
We now bound the terms in the above display equation.
\begin{enumerate}
    \item Observe first that
    \begin{equation*}
        |\remone|
        = \left| \frac{(\bmd'\bm1_{n})^{\alpha}}{(\bmq'\bm1_{n})^{2\alpha}} - 1 - \alpha x \right|
        \leq \left| \frac{(\bmd'\bm1_{n})^{\alpha}}{(\bmq'\bm1_{n})^{2\alpha}} \right| + | 1 - \alpha x |
        .
    \end{equation*}
    Now, consider the following case-by-case analysis:
    \begin{enumerate}
        \item $\bmd = \bm0_{n}$:
        Since we defined $0^{\alpha} = 0$ for all $\alpha$,
        in this case we have
        \begin{equation*}
            \left| \frac{(\bmd'\bm1_{n})^{\alpha}}{(\bmq'\bm1_{n})^{2\alpha}} \right|
            = \left| \frac{0}{(\bmq'\bm1_{n})^{2\alpha}} \right|
            = 0
            .
        \end{equation*}
        \item $\bmd \neq \bm0_{n}$:
        If $\alpha \geq 0$ then
        \begin{equation*}
            \left| \frac{(\bmd'\bm1_{n})^{\alpha}}{(\bmq'\bm1_{n})^{2\alpha}} \right|
            \leq \left| \frac{(n^2)^{\alpha}}{(n \qmin)^{2\alpha}} \right|
            = \qmin^{-2\alpha}
            \lesssim 1
            ,
        \end{equation*}
        and if $\alpha < 0$ then
        \begin{equation*}
            \left| \frac{(\bmd'\bm1_{n})^{\alpha}}{(\bmq'\bm1_{n})^{2\alpha}} \right|
            \leq \left| \frac{(1)^{\alpha}}{(n\qmax)^{2\alpha}} \right|
            = \qmax^{-2\alpha} n^{-2\alpha}
            \lesssim n^{-2\alpha}
            .
        \end{equation*}
    \end{enumerate}
    Moreover, we have that
    \begin{equation*}
        |x|
        \le
        \frac{1}{\sqrt{n}} \frac{|\bm1_{n}'\bmB \bm1_{n}|}{(\bmq'\bm1_{n})^2} + \frac{|\bm1_{n}'\bmX\bm1_{n}|}{(\bmq'\bm1_{n})^2}
        \lesssim
        \frac{1}{\sqrt{n}} + 1
        .
    \end{equation*}
    Thus, it holds surely that
    \begin{equation*}
        |\remone|
        \lesssim
        n^{\max(0,-2\alpha)}
        ,
    \end{equation*}
    and so
    \begin{equation}
        \label{eq:remone:loe:1}
        |\remone|^{\mmt}
        \leq
        C^{\mmt} \left(n^{\max(0,-2\alpha)}\right)^{\mmt}
        =
        C^{\mmt} n^{\max(0,-2\mmt\alpha)}
        ,
    \end{equation}
    where $C$ is a constant that depends only on the model parameters.

    \item It follows from \cref{eq:bound_prelim_observation_2} that
    there exists a constant $C > 0$
    so that provided $n$ is sufficiently large,
    \begin{equation}
        \label{eq:remone:loe:2}
        \Pr\left( \frac{|\bm1_{n}'\bmX\bm1_{n}|}{(\bmq'\bm1_{n})^2} \geq \epsilon \right)
        \leq 2 \exp\left(-\frac{\epsilon^2}{(C/n)^2}\right)
        =
        2 \exp(-C^{-2} \epsilon^2 n^2)
        .
    \end{equation}

    \item Observe that for $|\remone|^{\mmt}$,
    there exist constants $C, N > 0$ so that for $n \geq N$,
    \begin{align*}
        \bbE\left\{ |\remone|^{\mmt} : \frac{|\bm1_{n}'\bmX\bm1_{n}|}{(\bmq'\bm1_{n})^2} < \epsilon \right\}
        &
        =
        \bbE\left\{ \left|\frac{\alpha(\alpha-1)}{2!} (1+c)^{\alpha-2}\right|^{\mmt} \cdot |x^2|^{\mmt} : \frac{|\bm1_{n}'\bmX\bm1_{n}|}{(\bmq'\bm1_{n})^2} < \epsilon \right\}
        \\&
        \leq
        \bbE\left\{ C^{\mmt} \cdot |x^2|^{\mmt} : \frac{|\bm1_{n}'\bmX\bm1_{n}|}{(\bmq'\bm1_{n})^2} < \epsilon \right\}
        = C^{\mmt} \cdot \bbE\left\{ |x^2|^{\mmt} : \frac{|\bm1_{n}'\bmX\bm1_{n}|}{(\bmq'\bm1_{n})^2} < \epsilon \right\}
        ,
    \end{align*}
    and so
    \begin{align}
        \label{eq:remone:loe:3}
        &
        \bbE\left\{ |\remone|^{\mmt} : \frac{|\bm1_{n}'\bmX\bm1_{n}|}{(\bmq'\bm1_{n})^2} < \epsilon \right\}
        \Pr\left( \frac{|\bm1_{n}'\bmX\bm1_{n}|}{(\bmq'\bm1_{n})^2} < \epsilon \right)
        \\ \nonumber
        &\qquad
        \leq
        C^{\mmt}
        \cdot
        \bbE\left\{ |x^2|^{\mmt} : \frac{|\bm1_{n}'\bmX\bm1_{n}|}{(\bmq'\bm1_{n})^2} < \epsilon \right\}
        \Pr\left( \frac{|\bm1_{n}'\bmX\bm1_{n}|}{(\bmq'\bm1_{n})^2} < \epsilon \right)
        \\ \nonumber
        &\qquad
        =
        C^{\mmt}
        \cdot
        \bigg[ \bbE |x^2|^{\mmt} - \bbE\left\{ |x^2|^{\mmt} : \frac{|\bm1_{n}'\bmX\bm1_{n}|}{(\bmq'\bm1_{n})^2} \geq \epsilon \right\}
        \Pr\left( \frac{|\bm1_{n}'\bmX\bm1_{n}|}{(\bmq'\bm1_{n})^2} \geq \epsilon \right) \bigg]
        \\ \nonumber
        &\qquad
        \leq
        C^{\mmt}
        \cdot
        \bbE |x^2|^{\mmt}
        =
        C^{\mmt}
        \cdot
        \bbE |x|^{2\mmt}
        \\ \nonumber
        &\qquad
        \leq
        C^{\mmt}
        2^{2\mmt-1}
        \left[
            \bbE \left|
                \frac{1}{\sqrt{n}}
                \frac{\bm1_{n}'\bmB \bm1_{n}}{(\bmq'\bm1_{n})^2}
            \right|^{2\mmt}
            +
            \bbE \left|
                \frac{\bm1_{n}'\bmX\bm1_{n}}{(\bmq'\bm1_{n})^2}
            \right|^{2\mmt}
        \right]
        \\ \nonumber
        &\qquad
        \leq
        \frac{C_1^{\mmt}}{n^{\mmt}} + \frac{C_2^{\mmt} \mmt^{\mmt}}{n^{2\mmt}}
        \\ \nonumber
        &
        \qquad
        \leq
        \frac{C_3^{\mmt}\mmt^{\mmt}}{n^{\mmt}}
        .
    \end{align}
\end{enumerate}
Combining \cref{eq:remone:loe,eq:remone:loe:1,eq:remone:loe:2,eq:remone:loe:3} yields
that there exist constants $C_1, C_2, C_3, N > 0$ so that for any $n \geq N$ and any $\mmt \geq 1$
\begin{equation*}
    \bbE\{|\remone|^{\mmt}\}
    \leq
    C_1^{\mmt} n^{\max(0,-2\mmt\alpha)}
    \exp(-C_2^{-1} \epsilon^2 n^2)
    + \frac{C_3^{\mmt}\mmt^{\mmt}}{n^{\mmt}}
    ,
\end{equation*}
and so we have after simplifying that for any $\mmt \geq 1$
\begin{equation}
    \ellP{\remone}
    \lesssim
    n^{-1}
    .
\end{equation}
Summarizing, we have
\begin{equation}
    \label{eq:decomp:d:one}
    (\bmd'\bm1_{n})^{\alpha}
    =
    (\bmq'\bm1_{n})^{2\alpha}
    \left(
        1 + \lowone
    \right)
    ,
\end{equation}
where
\begin{equation}
    \lowone
    \coloneqq
    \alpha
    \left[
        \frac{1}{\sqrt{n}}
        \frac{\bm1_{n}'\bmB \bm1_{n}}{(\bmq'\bm1_{n})^2}
        +
        \frac{\bm1_{n}'\bmX\bm1_{n}}{(\bmq'\bm1_{n})^2}
    \right]
    +
    \remone
    ,
\end{equation}
and the fact that $\bm1_{n}'\bmX\bm1_{n}$ is sub-Gaussian with $\|\bm1_{n}'\bmX\bm1_{n}\|_{\psi_{2}} \lesssim n$ yields $\ellP{\bm1_{n}'\bmX\bm1_{n}} \lesssim n$, so it follows from our above analysis that for any $\mmt \geq 1$,
\begin{align}
    \ellP{\lowone}
    &
    =
    \ellP{
        \frac{\alpha}{\sqrt{n}}\frac{\bm1_{n}'\bmB \bm1_{n}}{(\bmq'\bm1_{n})^2}
        + \alpha\frac{\bm1_{n}'\bmX\bm1_{n}}{(\bmq'\bm1_{n})^2}
        + \remone
    }
    \\& \nonumber
    \leq
    \ellP{
        \frac{\alpha}{\sqrt{n}}\frac{\bm1_{n}'\bmB \bm1_{n}}{(\bmq'\bm1_{n})^2}
    }
    + \ellP{\alpha\frac{\bm1_{n}'\bmX\bm1_{n}}{(\bmq'\bm1_{n})^2}}
    + \ellP{\remone}
    \\& \nonumber
    \lesssim
    n^{-1/2}
    + n^{-1}
    + n^{-1}
    \\& \nonumber
    \lesssim
    n^{-1/2}
    .
\end{align}
Hence, we have obtained the desired characterization of $(\bmd'\bm1_{n})^{-\alpha}$ that will be used in the remainder of the proof.

\subsection{Characterizing \texorpdfstring{$\frac{1}{\sqrt{n}} \frac{\bmd\bmd'}{\bmd'\bm1_{n}}$}{1/sqrt(n) dd'/(d'1)}}
\newcommand{\remthree}{{\highlighted{\bmDelta_{3}}}}
Substituting $\alpha = -1$ into \cref{eq:jmlr:11} yields
\begin{equation*}
    (\bmd'\bm1_{n})^{-1}
    =
    (\bmq'\bm1_{n})^{-2}
    \left[
        1 - \frac{1}{\sqrt{n}} \frac{\bm1_{n}'\bmB\bm1_{n}}{(\bmq'\bm1_{n})^2} - \frac{\bm1_{n}'\bmX\bm1_{n}}{(\bmq'\bm1_{n})^2} + \remone
    \right]
    ,
\end{equation*}
and so we have the following analogue of \cite[Equation~(13)]{ali2017improved}, namely
\begin{align}
    \frac{1}{\sqrt{n}} \frac{\bmd\bmd'}{\bmd'\bm1_{n}}
    &=
    \frac{\bmq\bmq'}{\sqrt{n}}
    + \frac{1}{n} \frac{\bmq\bm1_{n}'\bmB}{\bmq'\bm1_{n}}
    + \frac{1}{n} \frac{\bmB\bm1_{n}\bmq'}{\bmq'\bm1_{n}}
    + \frac{1}{\sqrt{n}} \frac{\bmq\bm1_{n}'\bmX}{\bmq'\bm1_{n}}
    + \frac{1}{\sqrt{n}} \frac{\bmX\bm1_{n}\bmq'}{\bmq'\bm1_{n}}
    \\&\quad\nonumber
    - \frac{1}{n} \frac{\bm1_{n}'\bmB\bm1_{n}}{(\bmq'\bm1_{n})^2} \bmq\bmq'
    - \frac{1}{\sqrt{n}} \frac{\bm1_{n}'\bmX\bm1_{n}}{(\bmq'\bm1_{n})^2} \bmq\bmq'
    + \remthree
    ,
\end{align}
where the residual term $\remthree$ is explicitly given by
\begin{align}
    \remthree
    &\coloneqq
    \frac{1}{\sqrt{n}} \frac{1}{n} \frac{\bmB\bm1_{n}\bm1_{n}'\bmB}{(\bmq'\bm1_{n})^2}
    + \frac{1}{n} \frac{\bmB\bm1_{n}\bm1_{n}'\bmX}{(\bmq'\bm1_{n})^2}
    \\&\qquad\nonumber
    + \frac{1}{n} \frac{\bmX\bm1_{n}\bm1_{n}'\bmB}{(\bmq'\bm1_{n})^2}
    + \frac{1}{\sqrt{n}} \frac{\bmX\bm1_{n}\bm1_{n}'\bmX}{(\bmq'\bm1_{n})^2}
    - \frac{1}{n} \frac{\bm1_{n}'\bmB\bm1_{n}}{(\bmq'\bm1_{n})^2} \frac{1}{\sqrt{n}} \frac{\bmq\bm1_{n}'\bmB}{\bmq'\bm1_{n}}
    \\&\qquad\nonumber
    - \frac{1}{n} \frac{\bm1_{n}'\bmB\bm1_{n}}{(\bmq'\bm1_{n})^2} \frac{1}{\sqrt{n}} \frac{\bmB\bm1_{n}\bmq'}{\bmq'\bm1_{n}}
    - \frac{1}{n} \frac{\bm1_{n}'\bmB\bm1_{n}}{(\bmq'\bm1_{n})^2} \frac{\bmq\bm1_{n}'\bmX}{\bmq'\bm1_{n}}
    \\&\qquad\nonumber
    - \frac{1}{n} \frac{\bm1_{n}'\bmB\bm1_{n}}{(\bmq'\bm1_{n})^2} \frac{\bmX\bm1_{n}\bmq'}{\bmq'\bm1_{n}}
    - \frac{1}{n} \frac{\bm1_{n}'\bmB\bm1_{n}}{(\bmq'\bm1_{n})^2} \frac{1}{n} \frac{\bmB\bm1_{n}\bm1_{n}'\bmB}{(\bmq'\bm1_{n})^2}
    \\&\qquad\nonumber
    - \frac{1}{n} \frac{\bm1_{n}'\bmB\bm1_{n}}{(\bmq'\bm1_{n})^2} \frac{1}{\sqrt{n}} \frac{\bmB\bm1_{n}\bm1_{n}'\bmX}{(\bmq'\bm1_{n})^2}
    \\&\qquad\nonumber
    - \frac{1}{n} \frac{\bm1_{n}'\bmB\bm1_{n}}{(\bmq'\bm1_{n})^2} \frac{1}{\sqrt{n}} \frac{\bmX\bm1_{n}\bm1_{n}'\bmB}{(\bmq'\bm1_{n})^2}
    - \frac{1}{n} \frac{\bm1_{n}'\bmB\bm1_{n}}{(\bmq'\bm1_{n})^2} \frac{\bmX\bm1_{n}\bm1_{n}'\bmX}{(\bmq'\bm1_{n})^2}
    \\&\qquad\nonumber
    - \frac{1}{\sqrt{n}} \frac{\bm1_{n}'\bmX\bm1_{n}}{(\bmq'\bm1_{n})^2} \frac{1}{\sqrt{n}} \frac{\bmq\bm1_{n}'\bmB}{\bmq'\bm1_{n}}
    - \frac{1}{\sqrt{n}} \frac{\bm1_{n}'\bmX\bm1_{n}}{(\bmq'\bm1_{n})^2} \frac{1}{\sqrt{n}} \frac{\bmB\bm1_{n}\bmq'}{\bmq'\bm1_{n}}
    \\&\qquad\nonumber
    - \frac{1}{\sqrt{n}} \frac{\bm1_{n}'\bmX\bm1_{n}}{(\bmq'\bm1_{n})^2} \frac{\bmq\bm1_{n}'\bmX}{\bmq'\bm1_{n}}
    - \frac{1}{\sqrt{n}} \frac{\bm1_{n}'\bmX\bm1_{n}}{(\bmq'\bm1_{n})^2} \frac{\bmX\bm1_{n}\bmq'}{\bmq'\bm1_{n}}
    \\&\qquad\nonumber
    - \frac{1}{\sqrt{n}} \frac{\bm1_{n}'\bmX\bm1_{n}}{(\bmq'\bm1_{n})^2} \frac{1}{n} \frac{\bmB\bm1_{n}\bm1_{n}'\bmB}{(\bmq'\bm1_{n})^2}
    - \frac{1}{\sqrt{n}} \frac{\bm1_{n}'\bmX\bm1_{n}}{(\bmq'\bm1_{n})^2} \frac{1}{\sqrt{n}} \frac{\bmB\bm1_{n}\bm1_{n}'\bmX}{(\bmq'\bm1_{n})^2}
    \\&\qquad\nonumber
    - \frac{1}{\sqrt{n}} \frac{\bm1_{n}'\bmX\bm1_{n}}{(\bmq'\bm1_{n})^2} \frac{1}{\sqrt{n}} \frac{\bmX\bm1_{n}\bm1_{n}'\bmB}{(\bmq'\bm1_{n})^2}
    - \frac{1}{\sqrt{n}} \frac{\bm1_{n}'\bmX\bm1_{n}}{(\bmq'\bm1_{n})^2} \frac{\bmX\bm1_{n}\bm1_{n}'\bmX}{(\bmq'\bm1_{n})^2}
    + \frac{1}{\sqrt{n}} \remone \bmq\bmq'
    \\&\qquad\nonumber
    + \frac{1}{\sqrt{n}} \remone \frac{1}{\sqrt{n}} \frac{\bmq\bm1_{n}'\bmB}{\bmq'\bm1_{n}}
    + \frac{1}{\sqrt{n}} \remone \frac{1}{\sqrt{n}} \frac{\bmB\bm1_{n}\bmq'}{\bmq'\bm1_{n}}
    + \frac{1}{\sqrt{n}} \remone \frac{\bmq\bm1_{n}'\bmX}{\bmq'\bm1_{n}}
    \\&\qquad\nonumber
    + \frac{1}{\sqrt{n}} \remone \frac{\bmX\bm1_{n}\bmq'}{\bmq'\bm1_{n}}
    + \frac{1}{\sqrt{n}} \remone \frac{1}{n} \frac{\bmB\bm1_{n}\bm1_{n}'\bmB}{(\bmq'\bm1_{n})^2}
    \\&\qquad\nonumber
    + \frac{1}{\sqrt{n}} \remone \frac{1}{\sqrt{n}} \frac{\bmB\bm1_{n}\bm1_{n}'\bmX}{(\bmq'\bm1_{n})^2}
    + \frac{1}{\sqrt{n}} \remone \frac{1}{\sqrt{n}} \frac{\bmX\bm1_{n}\bm1_{n}'\bmB}{(\bmq'\bm1_{n})^2}
    + \frac{1}{\sqrt{n}} \remone \frac{\bmX\bm1_{n}\bm1_{n}'\bmX}{(\bmq'\bm1_{n})^2}
    .
\end{align}

We now wish to bound $\bbE\|\remthree\|_{\op}^{\mmt}$ for any positive integer $\mmt \ge 1$. We shall make use of the following preliminary bounds.

\begin{proposition}
    \label{prop:prelim:bounds}
    The following bounds hold deterministically:
    \begin{align}
        \label{eq:bound:2}
        \|\bmB\|_{\op} &\lesssim n
        , \\
        \label{eq:bound:3}
        \|\bmq \bm1_{n}'\|_{\op} &\lesssim n
        , \\
        \label{eq:bound:3b}
        |\bmq' \bm1_{n}|^{-1} &\lesssim n^{-1}
        , \\
        \label{eq:bound:3c}
        \|\bmq \bmq'\|_{\op} &\lesssim n
        , \\
        \label{eq:bound:1}
        \ellP{\opnorm{\bmX}} &\lesssim n^{1/2}
        , \\
        \label{eq:bound:1b}
        \ellP{\bm1_{n}'\bmX\bm1_{n}} &\lesssim n
        , \\
        \label{eq:bound:4}
        \ellP{\|\bmX \bm1_{n} \bm1_{n}' \bmX\|_{\op}} &\lesssim n^{2}
        .
    \end{align}
    The following bounds hold with probability one:
    \begin{align}
        \label{eq:bound:5}
        \|\bmX\|_{\op} &\lesssim n
        , \\
        \label{eq:bound:6}
        |\bm1_{n}' \bmX \bm1_{n}| &\lesssim n^{2}
        .
    \end{align}
\end{proposition}

\begin{proof}[Proof of \cref{prop:prelim:bounds}]
    \Cref{eq:bound:2} follows by bounding the operator norm by the maximum absolute row sum
    and noting that $\Mmax$ and $\qmax$ are constants.
    \Cref{eq:bound:3,eq:bound:3b,,eq:bound:3c} hold by noting that $\qmax$ and $\qmin$ are constant.
    \Cref{eq:bound:1} follows from \cref{lem:opnorm:subgaussian}.
    \Cref{eq:bound:1b} follows from \cite[Proposition~2.5.2(ii) and Proposition~2.6.1]{vershynin2018hdp}.
    \Cref{eq:bound:4} holds since
    \begin{align*}
        \ellP{\|\bmX \bm1_{n} \bm1_{n}' \bmX\|_{\op}}
        &
        \leq \ellP{\|\bmX\|_{\op} \|\bm1_{n} \bm1_{n}'\|_{\op} \|\bmX\|_{\op}}
        = n \ellP{\|\bmX\|_{\op}^2}
        \\& \nonumber
        = n \left[ \bbE \|\bmX\|_{\op}^{2\mmt} \right]^{1/\mmt}
        = n \left\{ \left[ \bbE \|\bmX\|_{\op}^{2\mmt} \right]^{1/(2\mmt)} \right\}^2
        \lesssim n \left\{ n^{1/2} \right\}^2
        = n^{2}
        ,
    \end{align*}
    where the final inequality follows from \cref{eq:bound:1}.
    \Cref{eq:bound:5} holds due to the general inequality $\|\bmX\|_{\op} \le \sqrt{\|\bmX\|_{1} \|\bmX\|_{\infty}}$ and since the entries of $\bmX$ are bounded with probability one.
    \Cref{eq:bound:6} holds since
    \begin{equation*}
        |\bm1_{n}' \bmX \bm1_{n}|
        =
        \Big| \sum_{ij} X_{ij} \Big|
        \leq
        \sum_{ij} |X_{ij}|
        \lesssim
        n^{2}
        ,
    \end{equation*}
    where the final inequality follows by noting that
    the entries of $\bmX$ are bounded with probability one.
\end{proof}

Using \cref{prop:prelim:bounds}, we now obtain bounds for all the terms appearing in the definition of $\remthree$ as follows.
Beginning with the first term, we have that for any positive integer $\mmt \ge 1$,
\begin{align*}
    \ellP{ \left\| \frac{1}{\sqrt{n}} \frac{1}{n} \frac{\bmB(\bm1_{n}\bm1_{n}')\bmB}{(\bmq'\bm1_{n})^2} \right\|_{\op} }
    &
    =
    \frac{1}{\sqrt{n}}
    \frac{1}{n}
    \frac{1}{(\bmq'\bm1_{n})^2}
    \ellP{\left\|\bmB(\bm1_{n}\bm1_{n}')\bmB\right\|_{\op}}
    \\&
    =
    \frac{1}{\sqrt{n}}
    \frac{1}{n}
    \frac{1}{(\bmq'\bm1_{n})^2}
    \left\|\bmB(\bm1_{n}\bm1_{n}')\bmB\right\|_{\op}
    \\&
    \leq
    \frac{1}{\sqrt{n}}
    \frac{1}{n}
    \frac{1}{(\bmq'\bm1_{n})^2}
    \left\|
    \bmB
    \right\|_{\op}
    \left\|
    \bm1_{n}\bm1_{n}'
    \right\|_{\op}
    \left\|
    \bmB
    \right\|_{\op}
    \\&
    \lesssim
    n^{-1/2} \cdot n^{-1} \cdot n^{-2} \cdot n \cdot n \cdot n
    \\&
    =
    n^{-1/2}
    ,
\end{align*}
where
the first line follows by homogeneity of $\opnorm{\cdot}$ and $\ellP{\cdot}$,
the second line follows since the terms are deterministic,
the third line follows by submultiplicativity of $\opnorm{\cdot}$,
the fourth line follows by \cref{prop:prelim:bounds},
and the final line simplifies the expression.
Applying similar arguments to the second term, we obtain
\begin{align*}
    &\ellP{ \left\| \frac{1}{n} \frac{\bmB(\bm1_{n}\bm1_{n}')\bmX}{(\bmq'\bm1_{n})^2} \right\|_{\op} } \\
    &\qquad
    \leq
    \frac{1}{n}
    \frac{1}{(\bmq'\bm1_{n})^2}
    \|\bmB\|_{\op}
    \|\bm1_{n}\bm1_{n}'\|_{\op}
    \ellP{ \|\bmX\|_{\op} } \\
    &\qquad
    \lesssim
    n^{-1} \cdot n^{-2} \cdot n \cdot n \cdot n^{1/2} \\
    &\qquad\lesssim
    n^{-1/2}
    ,
\end{align*}
and likewise for the remaining terms (where we have added parentheses to show the appropriate groupings):
\begin{align*}
    \ellP{ \left\| \frac{1}{n} \frac{\bmX(\bm1_{n}\bm1_{n}')\bmB}{(\bmq'\bm1_{n})^2} \right\|_{\op} } &\lesssim n^{-1/2}
    , \\
    \ellP{ \left\| \frac{1}{\sqrt{n}} \frac{(\bmX\bm1_{n}\bm1_{n}'\bmX)}{(\bmq'\bm1_{n})^2} \right\|_{\op} } &\lesssim n^{-1/2}
    , \\
    \ellP{ \left\| \frac{1}{n} \frac{\bm1_{n}'\bmB\bm1_{n}}{(\bmq'\bm1_{n})^2} \frac{1}{\sqrt{n}} \frac{(\bmq\bm1_{n}')\bmB}{\bmq'\bm1_{n}} \right\|_{\op} } &\lesssim n^{-1/2}
    , \\
    \ellP{ \left\| \frac{1}{n} \frac{\bm1_{n}'\bmB\bm1_{n}}{(\bmq'\bm1_{n})^2} \frac{1}{\sqrt{n}} \frac{\bmB(\bm1_{n}\bmq')}{\bmq'\bm1_{n}} \right\|_{\op} } &\lesssim n^{-1/2}
    , \\
    \ellP{ \left\| \frac{1}{n} \frac{\bm1_{n}'\bmB\bm1_{n}}{(\bmq'\bm1_{n})^2} \frac{(\bmq\bm1_{n}')\bmX}{\bmq'\bm1_{n}} \right\|_{\op} } &\lesssim n^{-1/2}
    , \\
    \ellP{ \left\| \frac{1}{n} \frac{\bm1_{n}'\bmB\bm1_{n}}{(\bmq'\bm1_{n})^2} \frac{\bmX(\bm1_{n}\bmq')}{\bmq'\bm1_{n}} \right\|_{\op} } &\lesssim n^{-1/2}
    , \\
    \ellP{ \left\| \frac{1}{n} \frac{\bm1_{n}'\bmB\bm1_{n}}{(\bmq'\bm1_{n})^2} \frac{1}{n} \frac{\bmB(\bm1_{n}\bm1_{n}')\bmB}{(\bmq'\bm1_{n})^2} \right\|_{\op} } &\lesssim n^{-1}
    , \\
    \ellP{ \left\| \frac{1}{n} \frac{\bm1_{n}'\bmB\bm1_{n}}{(\bmq'\bm1_{n})^2} \frac{1}{\sqrt{n}} \frac{\bmB(\bm1_{n}\bm1_{n}')\bmX}{(\bmq'\bm1_{n})^2} \right\|_{\op} } &\lesssim n^{-1}
    , \\
    \ellP{ \left\| \frac{1}{n} \frac{\bm1_{n}'\bmB\bm1_{n}}{(\bmq'\bm1_{n})^2} \frac{1}{\sqrt{n}} \frac{\bmX(\bm1_{n}\bm1_{n}')\bmB}{(\bmq'\bm1_{n})^2} \right\|_{\op} } &\lesssim n^{-1}
    , \\
    \ellP{ \left\| \frac{1}{n} \frac{\bm1_{n}'\bmB\bm1_{n}}{(\bmq'\bm1_{n})^2} \frac{(\bmX\bm1_{n}\bm1_{n}'\bmX)}{(\bmq'\bm1_{n})^2} \right\|_{\op} } &\lesssim n^{-1}
    , \\
    \ellP{ \left\| \frac{1}{\sqrt{n}} \frac{(\bm1_{n}'\bmX\bm1_{n})}{(\bmq'\bm1_{n})^2} \frac{1}{\sqrt{n}} \frac{(\bmq\bm1_{n}')\bmB}{\bmq'\bm1_{n}} \right\|_{\op} } &\lesssim n^{-1}
    , \\
    \ellP{ \left\| \frac{1}{\sqrt{n}} \frac{(\bm1_{n}'\bmX\bm1_{n})}{(\bmq'\bm1_{n})^2} \frac{1}{\sqrt{n}} \frac{\bmB(\bm1_{n}\bmq')}{\bmq'\bm1_{n}} \right\|_{\op} } &\lesssim n^{-1}
    , \\
    \ellP{ \left\| \frac{1}{\sqrt{n}} \frac{(\bm1_{n}'\bmX\bm1_{n})}{(\bmq'\bm1_{n})^2} \frac{(\bmq\bm1_{n}')\bmX}{\bmq'\bm1_{n}} \right\|_{\op} } &\lesssim n^{-1/2}
    , \\
    \ellP{ \left\| \frac{1}{\sqrt{n}} \frac{(\bm1_{n}'\bmX\bm1_{n})}{(\bmq'\bm1_{n})^2} \frac{\bmX(\bm1_{n}\bmq')}{\bmq'\bm1_{n}} \right\|_{\op} } &\lesssim n^{-1/2}
    , \\
    \ellP{ \left\| \frac{1}{\sqrt{n}} \frac{(\bm1_{n}'\bmX\bm1_{n})}{(\bmq'\bm1_{n})^2} \frac{1}{n} \frac{\bmB(\bm1_{n}\bm1_{n}')\bmB}{(\bmq'\bm1_{n})^2} \right\|_{\op} } &\lesssim n^{-3/2}
    , \\
    \ellP{ \left\| \frac{1}{\sqrt{n}} \frac{(\bm1_{n}'\bmX\bm1_{n})}{(\bmq'\bm1_{n})^2} \frac{1}{\sqrt{n}} \frac{\bmB(\bm1_{n}\bm1_{n}')\bmX}{(\bmq'\bm1_{n})^2} \right\|_{\op} } &\lesssim n^{-1}
    , \\
    \ellP{ \left\| \frac{1}{\sqrt{n}} \frac{(\bm1_{n}'\bmX\bm1_{n})}{(\bmq'\bm1_{n})^2} \frac{1}{\sqrt{n}} \frac{\bmX(\bm1_{n}\bm1_{n}')\bmB}{(\bmq'\bm1_{n})^2} \right\|_{\op} } &\lesssim n^{-1}
    , \\
    \ellP{ \left\| \frac{1}{\sqrt{n}} \frac{(\bm1_{n}'\bmX\bm1_{n})}{(\bmq'\bm1_{n})^2} \frac{(\bmX\bm1_{n}\bm1_{n}'\bmX)}{(\bmq'\bm1_{n})^2} \right\|_{\op} } &\lesssim n^{-1/2}
    , \\
    \ellP{ \left\| \frac{1}{\sqrt{n}} \remone (\bmq\bmq') \right\|_{\op} } &\lesssim \ellP{\remone} n^{1/2}
    , \\
    \ellP{ \left\| \frac{1}{\sqrt{n}} \remone \frac{1}{\sqrt{n}} \frac{(\bmq\bm1_{n}')\bmB}{\bmq'\bm1_{n}} \right\|_{\op} } &\lesssim \ellP{\remone}
    , \\
    \ellP{ \left\| \frac{1}{\sqrt{n}} \remone \frac{1}{\sqrt{n}} \frac{\bmB(\bm1_{n}\bmq')}{\bmq'\bm1_{n}} \right\|_{\op} } &\lesssim \ellP{\remone}
    , \\
    \ellP{ \left\| \frac{1}{\sqrt{n}} \remone \frac{(\bmq\bm1_{n}')\bmX}{\bmq'\bm1_{n}} \right\|_{\op} } &\lesssim \ellP{\remone} n^{1/2}
    , \\
    \ellP{ \left\| \frac{1}{\sqrt{n}} \remone \frac{\bmX(\bm1_{n}\bmq')}{\bmq'\bm1_{n}} \right\|_{\op} } &\lesssim \ellP{\remone} n^{1/2}
    , \\
    \ellP{ \left\| \frac{1}{\sqrt{n}} \remone \frac{1}{n} \frac{\bmB(\bm1_{n}\bm1_{n}')\bmB}{(\bmq'\bm1_{n})^2} \right\|_{\op} } &\lesssim \ellP{\remone} n^{-1/2}
    , \\
    \ellP{ \left\| \frac{1}{\sqrt{n}} \remone \frac{1}{\sqrt{n}} \frac{\bmB(\bm1_{n}\bm1_{n}')\bmX}{(\bmq'\bm1_{n})^2} \right\|_{\op} } &\lesssim \ellP{\remone}
    , \\
    \ellP{ \left\| \frac{1}{\sqrt{n}} \remone \frac{1}{\sqrt{n}} \frac{\bmX(\bm1_{n}\bm1_{n}')\bmB}{(\bmq'\bm1_{n})^2} \right\|_{\op} } &\lesssim \ellP{\remone}
    , \\
    \ellP{ \left\| \frac{1}{\sqrt{n}} \remone \frac{\bmX(\bm1_{n}\bm1_{n}')\bmX}{(\bmq'\bm1_{n})^2} \right\|_{\op} } &\lesssim \ellP{\remone} n^{1/2}
    .
\end{align*}
By putting all the pieces together, we have that for any positive integer $\mmt \ge 1$,
\begin{equation}
    \label{eq:bound_remthree}
    \ellP{\opnorm{\remthree}}
    \lesssim
    n^{-1/2} + \ellP{\remone} n^{1/2}
    \lesssim
    n^{-1/2}.
\end{equation}

\subsection{Characterizing \texorpdfstring{$\frac{1}{\sqrt{n}} \left( \bmA - \frac{\bmd\bmd'}{\bmd'\bm1_{n}} \right)$}{1/sqrt(n) (A - dd'/(d'1))}}
Similar to \cite[Equation~(14)]{ali2017improved}, we have
\begin{align}
    \label{eq:jmlr:14}
    \frac{1}{\sqrt{n}} \left( \bmA - \frac{\bmd\bmd'}{\bmd'\bm1_{n}} \right)
    &=
    \frac{1}{n} \bmB
    - \frac{1}{n} \frac{\bmq\bm1_{n}'\bmB}{\bmq'\bm1_{n}}
    - \frac{1}{n} \frac{\bmB\bm1_{n}\bmq'}{\bmq'\bm1_{n}}
    \\&\qquad\nonumber
    + \frac{1}{n} \frac{\bm1_{n}'\bmB\bm1_{n}}{(\bmq'\bm1_{n})^2} \bmq\bmq'
    + \frac{\bmX}{\sqrt{n}}
    - \frac{1}{\sqrt{n}} \frac{\bmq\bm1_{n}'\bmX}{\bmq'\bm1_{n}}
    - \frac{1}{\sqrt{n}} \frac{\bmX\bm1_{n}\bmq'}{\bmq'\bm1_{n}}
    \\&\qquad\nonumber
    + \frac{1}{\sqrt{n}} \frac{\bm1_{n}'\bmX\bm1_{n}}{(\bmq'\bm1_{n})^2} \bmq\bmq'
    - \remthree
    .
\end{align}
The terms in this expansion are individually bounded in the following manner:
\begin{align*}
    \ellP{ \opnorm{ \frac{1}{n} \bmB } }
    &\lesssim 1
    , \\
    \ellP{ \opnorm{ \frac{1}{n} \frac{(\bmq\bm1_{n}')\bmB}{\bmq'\bm1_{n}} } }
    &\lesssim 1
    , \\
    \ellP{ \opnorm{ \frac{1}{n} \frac{\bmB(\bm1_{n}\bmq')}{\bmq'\bm1_{n}} } }
    &\lesssim 1
    , \\
    \ellP{ \opnorm{ \frac{1}{n} \frac{\bm1_{n}'\bmB\bm1_{n}}{(\bmq'\bm1_{n})^2} (\bmq\bmq') } }
    &\lesssim 1
    , \\
    \ellP{ \opnorm{ \frac{\bmX}{\sqrt{n}} } }
    &\lesssim 1
    , \\
    \ellP{ \opnorm{ \frac{1}{\sqrt{n}} \frac{(\bmq\bm1_{n}')\bmX}{\bmq'\bm1_{n}} } }
    &\lesssim 1
    , \\
    \ellP{ \opnorm{ \frac{1}{\sqrt{n}} \frac{\bmX(\bm1_{n}\bmq')}{\bmq'\bm1_{n}} } }
    &\lesssim 1
    , \\
    \ellP{ \opnorm{ \frac{1}{\sqrt{n}} \frac{\bm1_{n}'\bmX\bm1_{n}}{(\bmq'\bm1_{n})^2} (\bmq\bmq') } }
    &\lesssim n^{-1/2}
    , \\
    \ellP{ \opnorm{ \remthree } }
    &\lesssim n^{-1/2}
    ,
\end{align*}
where the final bound on $\ellP{ \opnorm{ \remthree } }$ is from \cref{eq:bound_remthree}. Hence, it follows from an application of the triangle inequality that
\begin{align}
    \ellP{ \opnorm{ \frac{1}{\sqrt{n}} \left( \bmA - \frac{\bmd\bmd'}{\bmd'\bm1_{n}} \right) } }
    \lesssim 1
    .
\end{align}
This gives an expansion of $\frac{1}{\sqrt{n}} \left( \bmA - \frac{\bmd\bmd'}{\bmd'\bm1_{n}} \right)$ with a useful bound as desired.

\newcommand{\remtwo}{{\highlighted{\clD(\bmr_{2})}}}
\newcommand{\lowtwo}{{\highlighted{\clD(\bmdelta_{2})}}}

\subsection{Characterizing \texorpdfstring{$\bmD^{-\alpha}$}{D\textasciicircum(-alpha)}}
Recall from \cref{eq:expand:bmd} that
\begin{equation*}
    \bmd
    =
    \bmq \bmq' \bm1_{n} + \frac{1}{\sqrt{n}}\bmB\bm1_{n} + \bmX\bm1_{n}
    =
    \bmq'\bm1_{n}\left( \bmq + \frac{1}{\sqrt{n}}\frac{\bmB\bm1_{q}}{\bmq'\bm1_{n}} + \frac{\bmX\bm1_{n}}{\bmq'\bm1_{n}}\right)
    .
\end{equation*}
Hence for the diagonal matrix $\bmD = \clD(\bmd)$, we have the expression
\begin{equation}
    \bmD
    =
    \bmq'\bm1_{n}
    \left(
        \bmD_{q}
        + \clD\left(\frac{1}{\sqrt{n}}\frac{\bmB\bm1_{q}}{\bmq'\bm1_{n}}\right)
        + \clD\left(\frac{\bmX\bm1_{n}}{\bmq'\bm1_{n}}\right)
    \right)
    ,
\end{equation}
where we note that
\begin{align}
    \label{eq:bound_prelim_observation_3}
    \max_{i} \left| \left[ \clD\left(\frac{1}{\sqrt{n}}\frac{\bmB\bm1_{q}}{\bmq'\bm1_{n}}\right) \right]_{ii} \right|
    &\lesssim
    n^{-1/2}
    , &
    \max_{i} \left\| \left[\clD\left(\frac{\bmX\bm1_{n}}{\bmq'\bm1_{n}}\right)\right]_{ii} \right\|_{\psi_2}
    &\lesssim
    n^{-1/2}
    .
\end{align}
The first expression in \cref{eq:bound_prelim_observation_3} holds since the entries of the vector $\bmB\bm1_{q}$ are the row sums of the (deterministic) matrix $\bmB = \bmD_{q}\bmJ\bmM\bmJ'\bmD_{q}$ which are of order $n$ provided the degree parameters and entries of $\bmM$ are uniformly bounded.
The second expression relies on the observation that $\| [\bmX\bm1_{n}]_{ii} \|_{\psi_{2}} \lesssim n^{1/2}$ by \cite[Proposition~2.5.2(ii) and Proposition~2.6.1]{vershynin2018hdp}.

Next, we apply the Taylor expansion
$f_i(x) = (q_{i} + x)^{-\alpha} = q_{i}^{-\alpha} - \alpha q_{i}^{-(\alpha+1)} x + O(|x|^2)$
centered at $x_0 = 0$ simultaneously to each diagonal element of $\bmD^{-\alpha}$ to obtain
\begin{align}
    \label{eq:D_neg_alpha_expansion}
    \bmD^{-\alpha}
    &
    = (\bmq'\bm1_{n})^{-\alpha}\Bigg[
        \bmD_{q}^{-\alpha}
        + \clD\left(\frac{1}{\sqrt{n}}\frac{\bmB\bm1_{q}}{\bmq'\bm1_{n}}\right)^{-\alpha}
        + \clD\left(\frac{\bmX\bm1_{n}}{\bmq'\bm1_{n}}\right)^{-\alpha}
    \Bigg]
    \\& \nonumber
    = (\bmq'\bm1_{n})^{-\alpha} \Bigg[
        \bmD_{q}^{-\alpha}
        -
        \alpha\bmD_{q}^{-(\alpha+1)}
        \left[
            \clD\left(\frac{1}{\sqrt{n}}\frac{\bmB\bm1_{q}}{\bmq'\bm1_{n}}\right)
            +
            \clD\left(\frac{\bmX\bm1_{n}}{\bmq'\bm1_{n}}\right)
        \right]
        + \remtwo
    \Bigg]
    ,
\end{align}
where the $O(|x|^{2})$ remainder term $\remtwo$ can be written as
\begin{equation}
    [\remtwo]_{ii}
    =
    \frac{\alpha(\alpha+1)}{2!} (q_{i} + c_{i})^{-\alpha-2} \cdot x_{i}^2
    ,
\end{equation}
where
\begin{equation*}
    x_{i}
    =
    \left[
        \clD\left(\frac{1}{\sqrt{n}}\frac{\bmB\bm1_{q}}{\bmq'\bm1_{n}}\right)
        +
        \clD\left(\frac{\bmX\bm1_{n}}{\bmq'\bm1_{n}}\right)
    \right]_{ii}
    ,
\end{equation*}
and $c_i$ is between $0$ and $x_i$.

Bounding $\bbE \|\remtwo\|_{\op}^{\mmt}$ requires us to address that $(q_i+c_i)^{-\alpha-2}$ may diverge if $c_i$ approaches $-q_i$ depending on the value of $\alpha$.
To resolve this, we apply the law of total expectation to obtain:
\begin{align}
    \label{eq:remtwo:loe}
    \bbE\{\|\remtwo\|_{\op}^{\mmt}\}
    &
    = \bbE\left\{ \|\remtwo\|_{\op}^{\mmt} : \clE_{\epsilon} \right\}
    \Pr\left( \clE_{\epsilon} \right)
    \\&\qquad
    + \bbE\left\{ \|\remtwo\|_{\op}^{\mmt} : \clE_{\epsilon}^{C} \right\}
    \Pr\left( \clE_{\epsilon}^{C} \right)
    , \nonumber
\end{align}
where we have conditioned on the low-probability event
\begin{equation*}
    \clE_{\epsilon}
    =
    \left\{
        \left\|\clD\left(\frac{\bmX\bm1_{n}}{\bmq'\bm1_{n}}\right)\right\|_{\op} \geq \epsilon
    \right\}
\end{equation*}
for some $\epsilon \in (0, \qmin)$.
We now bound each term in the above display equation.
\begin{enumerate}
    \item Observe first that
    \begin{equation*}
        |[\remtwo]_{ii}|
        = \left| \frac{(d_{i})^{-\alpha}}{(\bmq'\bm1_{n})^{-\alpha}} - q_{i}^{-\alpha} + \alpha q_{i}^{-(\alpha+1)} x_i \right|
        \leq \left| \frac{(d_{i})^{-\alpha}}{(\bmq'\bm1_{n})^{-\alpha}} \right| + | q_{i}^{-\alpha} | + \left| \alpha q_{i}^{-(\alpha+1)} \right| | x_i |
        .
    \end{equation*}
    Now, consider the following case-by-case analysis:
    \begin{enumerate}
        \item $d_{i} = 0$:
        Since we defined $0^{\alpha} = 0$ for all $\alpha$, in this case we have
        \begin{equation*}
            \left| \frac{(d_{i})^{-\alpha}}{(\bmq'\bm1_{n})^{-\alpha}} \right|
            = \left| \frac{0}{(\bmq'\bm1_{n})^{-\alpha}} \right|
            =
            0
            .
        \end{equation*}
        \item $d_{i} \neq 0$:
        If $\alpha < 0$ then
        \begin{equation*}
            \left| \frac{(d_{i})^{-\alpha}}{(\bmq'\bm1_{n})^{-\alpha}} \right|
            \lesssim \left| \frac{(n)^{-\alpha}}{(n)^{-\alpha}} \right|
            =
            1
            ,
        \end{equation*}
        and if $\alpha \geq 0$ then
        \begin{equation*}
            \left| \frac{(d_{i})^{-\alpha}}{(\bmq'\bm1_{n})^{-\alpha}} \right|
            \lesssim \left| \frac{(1)^{-\alpha}}{(n)^{-\alpha}} \right|
            =
            n^{\alpha}
            ,
        \end{equation*}
        where the implicit constant in both cases does not depend on $i$.
    \end{enumerate}
    Moreover, we have that
    \begin{equation*}
        |x_i|
        \le
        \frac{1}{\sqrt{n}} \frac{|[\bmB \bm1_{n}]_{i}|}{\bmq'\bm1_{n}}
        +
        \frac{|[\bmX\bm1_{n}]_{i}|}{\bmq'\bm1_{n}}
        \lesssim
        \frac{1}{\sqrt{n}} + 1
        ,
    \end{equation*}
    where the implicit constant does not depend on $i$.
    Thus, we have that surely
    \begin{equation*}
        \|\remtwo\|_{\op}
        =
        \max_{i}
        |[\remtwo]_{ii}|
        \lesssim n^{\max(0,\alpha)}
        ,
    \end{equation*}
    and so
    \begin{equation}
        \label{eq:remtwo:loe:1}
        \|\remtwo\|_{\op}^{\mmt}
        \lesssim
        \left(n^{\max(0,\alpha)}\right)^{\mmt}
        =
        n^{\max(0,\mmt\alpha)}
        .
    \end{equation}

    \item It follows from \cref{eq:bound_prelim_observation_3}
    that there exists a constant $C > 0$
    so that provided $n$ is sufficiently large,
    \begin{align}
        \label{eq:remtwo:loe:2}
        \Pr\left( \clE_{\epsilon} \right)
        &
        =
        \Pr\left( \exists_{i} \; \left|\clD\left(\frac{\bmX\bm1_{n}}{\bmq'\bm1_{n}}\right)_{ii}\right| \geq \epsilon \right)
        \leq
        \sum_{i} \Pr\left(\left|\clD\left(\frac{\bmX\bm1_{n}}{\bmq'\bm1_{n}}\right)_{ii}\right| \geq \epsilon \right)
        \\& \nonumber
        \leq \sum_{i} 2 \exp\left(-\frac{\epsilon^2}{(C/\sqrt{n})^2}\right)
        = 2 n \exp(-C^{-2} \epsilon^2 n)
        .
    \end{align}

    \item
    Observe that for $\|\remtwo\|_{\op}^{\mmt}$, there exists a constant $C > 0$ so that provided $n$ is sufficiently large,
    \begin{align*}
        &
        \bbE\left\{ \|\remtwo\|_{\op}^{\mmt} : \clE_{\epsilon}^C \right\}
        \\
        &\qquad
        =
        \bbE\left\{ \max_{i} \left|\frac{\alpha(\alpha+1)}{2!} (q_{i} + c_{i})^{-\alpha-2}\right|^{\mmt} \cdot |x_{i}^2|^{\mmt} : \clE_{\epsilon}^C \right\}
        \\
        &\qquad
        \leq
        \bbE\left\{ \max_{i} \; C^{\mmt} \cdot |x_{i}^2|^{\mmt} : \clE_{\epsilon}^C \right\}
        =
        C^{\mmt} \cdot \bbE\left\{ \max_{i} |x_{i}^2|^{\mmt} : \clE_{\epsilon}^C \right\}
        ,
    \end{align*}
    where the inequality follows from the fact that $|c_{i}| \leq |x_{i}|$.
    Further,
    \begin{equation*}
        \max_{i} |x_i|
        \leq
        \max_{i}
        \left|
            \left[
                \clD\left(\frac{1}{\sqrt{n}}\frac{\bmB\bm1_{q}}{\bmq'\bm1_{n}}\right)
            \right]_{ii}
        \right|
        +
        \left|
            \left[
                \clD\left(\frac{\bmX\bm1_{n}}{\bmq'\bm1_{n}}\right)
            \right]_{ii}
        \right|
        \leq
        \frac{C}{\sqrt{n}} + \epsilon
        <
        \frac{C}{\sqrt{n}} + \min_{i} q_{i}
        .
    \end{equation*}
    Thus, we proceed and obtain
    \begin{align}
        \label{eq:remtwo:loe:3}
        &
        \bbE\left\{ \|\remtwo\|_{\op}^{\mmt} : \clE_{\epsilon}^{C} \right\}
        \Pr\left( \clE_{\epsilon}^{C} \right)
        \\&\qquad \nonumber
        \leq
        C^{\mmt} \cdot \bbE\left\{ \max_{i} |x_{i}^2|^{\mmt} : \clE_{\epsilon}^{C} \right\}
        \Pr\left( \clE_{\epsilon}^{C} \right)
        \\&\qquad \nonumber
        =
        C^{\mmt} \cdot \left[ \bbE \max_{i} |x_{i}^2|^{\mmt} - \bbE\left\{ \max_{i} |x_{i}^2|^{\mmt} : \clE_{\epsilon} \right\}
        \Pr\left( \clE_{\epsilon} \right) \right]
        \\&\qquad \nonumber
        \leq
        C^{\mmt} \cdot \bbE \max_{i} |x_{i}^2|^{\mmt}
        \\&\qquad \nonumber
        =
        C^{\mmt} \cdot \bbE \max_{i} |x_{i}|^{2\mmt}
        \\&\qquad \nonumber
        =
        C^{\mmt} \cdot \ellP[2\mmt]{\max_{i} |x_{i}|}^{2\mmt}
        \\&\qquad \nonumber
        \leq
        C^{\mmt} \cdot \Bigg[
            \ellP[2\mmt]{\max_{i} \left|\frac{1}{\sqrt{n}} \frac{|[\bmB \bm1_{n}]_{i}|}{\bmq'\bm1_{n}}\right|}
            +
            \ellP[2\mmt]{\max_{i} \left|\frac{|[\bmX\bm1_{n}]_{i}|}{\bmq'\bm1_{n}}\right|}
        \Bigg]^{2\mmt}
        \\&\qquad \nonumber
        \lesssim
        C^{\mmt} \cdot \Bigg[
            \max_{i} \left|\frac{1}{\sqrt{n}} \frac{|[\bmB \bm1_{n}]_{i}|}{\bmq'\bm1_{n}}\right|
            +
            \subG{\max_{i} \left|\frac{|[\bmX\bm1_{n}]_{i}|}{\bmq'\bm1_{n}}\right|}
        \Bigg]^{2\mmt}
        \\&\qquad \nonumber
        \lesssim
        \frac{1}{n^{\mmt}} + \left(\frac{\log n}{n}\right)^{\mmt}
        ,
    \end{align}
    where we have used \cref{eq:bound_prelim_observation_3,lem:max:subgaussian}.
\end{enumerate}
Combining \cref{eq:remtwo:loe,eq:remtwo:loe:1,eq:remtwo:loe:2,eq:remtwo:loe:3} and simplifying yields that for any $\mmt \geq 1$
\begin{equation}
    \ellP{\|\remtwo\|_{\op}}
    =
    \left[ \bbE\{\|\remtwo\|_{\op}^{\mmt}\} \right]^{1/\mmt}
    \lesssim
    \frac{\log n}{n}.
\end{equation}
Summarizing, we have
\begin{equation}
    \label{eq:jmlr:15}
    \bmD^{-\alpha}
    = (\bmq'\bm1_{n})^{-\alpha} ( \bmD_{q}^{-\alpha} + \lowtwo )
    ,
\end{equation}
where
\begin{equation}
    \lowtwo
    \coloneqq
    -
    \alpha\bmD_{q}^{-(\alpha+1)}
    \left[
        \clD\left(\frac{1}{\sqrt{n}}\frac{\bmB\bm1_{q}}{\bmq'\bm1_{n}}\right)
        +
        \clD\left(\frac{\bmX\bm1_{n}}{\bmq'\bm1_{n}}\right)
    \right]
    +
    \remtwo
    ,
\end{equation}
and it follows from our above analysis that for any $\mmt \geq 1$,
\begin{align}
    \ellP{\opnorm{\lowtwo}}
    &
    \lesssim
    \ellP{\opnorm{
            \clD\left(\frac{1}{\sqrt{n}}\frac{\bmB\bm1_{q}}{\bmq'\bm1_{n}}\right)
    }}
    +
    \ellP{\opnorm{
            \clD\left(\frac{\bmX\bm1_{n}}{\bmq'\bm1_{n}}\right)
    }}
    + \ellP{\opnorm{\remtwo}}
    \\& \nonumber
    =
    \ellP{
        \max_{i}\left|\frac{1}{\sqrt{n}} \frac{|[\bmB \bm1_{n}]_{i}|}{\bmq'\bm1_{n}}\right|
    }
    +
    \ellP{
        \max_{i}\left|\frac{|[\bmX\bm1_{n}]_{i}|}{\bmq'\bm1_{n}}\right|
    }
    + \ellP{\opnorm{\remtwo}}
    \\& \nonumber
    \lesssim
    n^{-1/2}
    + \left(\frac{\log n}{n}\right)^{1/2}
    + \frac{\log n}{n}
    \\& \nonumber
    \lesssim \left(\frac{\log n}{n}\right)^{1/2}
    .
\end{align}

\subsection{Characterizing \texorpdfstring{$\bmL_{\alpha}$}{L\_alpha}}
\newcommand{\remfour}{{\highlighted{\bmDelta_{4}}}}
Combining \cref{eq:decomp:d:one,eq:jmlr:15},
we obtain
\begin{align}
    \bmL_{\alpha}
    &
    = (\bmd'\bm1_{n})^{\alpha} \frac{1}{\sqrt{n}} \bmD^{-\alpha} \left[\bmA - \frac{\bmd \bmd'}{\bmd'\bm1_{n}}\right] \bmD^{-\alpha}
    \\& \nonumber
    =
    (\bmq'\bm1_{n})^{2\alpha} [ 1 + \lowone ]
    (\bmq'\bm1_{n})^{-\alpha} [ \bmD_{q}^{-\alpha} + \lowtwo ]
    \left[\frac{1}{\sqrt{n}} \left( \bmA - \frac{\bmd\bmd'}{\bmd'\bm1_{n}} \right)\right]
    (\bmq'\bm1_{n})^{-\alpha} [ \bmD_{q}^{-\alpha} + \lowtwo ]
    \\& \nonumber
    =
    [ 1 + \lowone ]
    [ \bmD_{q}^{-\alpha} + \lowtwo ]
    \left[\frac{1}{\sqrt{n}} \left( \bmA - \frac{\bmd\bmd'}{\bmd'\bm1_{n}} \right)\right]
    [ \bmD_{q}^{-\alpha} + \lowtwo ]
    \\& \nonumber
    =
    \bmD_{q}^{-\alpha}
    \left[\frac{1}{\sqrt{n}} \left( \bmA - \frac{\bmd\bmd'}{\bmd'\bm1_{n}} \right)\right]
    \bmD_{q}^{-\alpha}
    +
    \remfour
    ,
\end{align}
where
\begin{align}
    \remfour
    &
    \coloneqq
    \lowone
    [ \bmD_{q}^{-\alpha} + \lowtwo ]
    \left[\frac{1}{\sqrt{n}} \left( \bmA - \frac{\bmd\bmd'}{\bmd'\bm1_{n}} \right)\right]
    [ \bmD_{q}^{-\alpha} + \lowtwo ]
    \\&\qquad \nonumber
    +
    \lowtwo
    \left[\frac{1}{\sqrt{n}} \left( \bmA - \frac{\bmd\bmd'}{\bmd'\bm1_{n}} \right)\right]
    \bmD_{q}^{-\alpha}
    +
    \bmD_{q}^{-\alpha}
    \left[\frac{1}{\sqrt{n}} \left( \bmA - \frac{\bmd\bmd'}{\bmd'\bm1_{n}} \right)\right]
    \lowtwo
    \\&\qquad \nonumber
    +
    \lowtwo
    \left[\frac{1}{\sqrt{n}} \left( \bmA - \frac{\bmd\bmd'}{\bmd'\bm1_{n}} \right)\right]
    \lowtwo
    .
\end{align}
Now, using \cref{eq:jmlr:14} and substituting the definition of $\bmB$, we get the following analogue to \cite[Equation (15.5)]{ali2017improved}:
\begin{align*}
    \bmL_{\alpha}
    &
    =
    \bmD_{q}^{-\alpha}
    \bigg[
        \frac{1}{n} \bmB
        - \frac{1}{n} \frac{\bmq\bm1_{n}'\bmB}{\bmq'\bm1_{n}}
        - \frac{1}{n} \frac{\bmB\bm1_{n}\bmq'}{\bmq'\bm1_{n}}
        \\&\quad\nonumber
        + \frac{1}{n} \frac{\bm1_{n}'\bmB\bm1_{n}}{(\bmq'\bm1_{n})^2} \bmq\bmq'
        + \frac{\bmX}{\sqrt{n}}
        - \frac{1}{\sqrt{n}} \frac{\bmq\bm1_{n}'\bmX}{\bmq'\bm1_{n}}
        - \frac{1}{\sqrt{n}} \frac{\bmX\bm1_{n}\bmq'}{\bmq'\bm1_{n}}
        \\&\quad\nonumber
        + \frac{1}{\sqrt{n}} \frac{\bm1_{n}'\bmX\bm1_{n}}{(\bmq'\bm1_{n})^2} \bmq\bmq'
        - \remthree
    \bigg]
    \bmD_{q}^{-\alpha}
    +
    \remfour
    \\&
    =
    \bmD_{q}^{-\alpha}\frac{\bmX}{\sqrt{n}}\bmD_{q}^{-\alpha}
    + \frac{1}{n} \bmD_{q}^{1-\alpha}\bmJ\bmM\bmJ'\bmD_{q}^{1-\alpha}
    - \frac{1}{n} \frac{\bmD_{q}^{1-\alpha}\bm1_{n}\bm1_{n}'\bmD_{q}\bmJ\bmM\bmJ'\bmD_{q}^{1-\alpha}}{\bmq'\bm1_{n}}
    \\&\quad\nonumber
    - \frac{1}{n} \frac{\bmD_{q}^{1-\alpha}\bmJ\bmM\bmJ'\bmD_{q}\bm1_{n}\bm1_{n}'\bmD_{q}^{1-\alpha}}{\bmq'\bm1_{n}}
    + \frac{1}{n} \frac{\bm1_{n}'\bmD_{q}\bmJ\bmM\bmJ'\bmD_{q}\bm1_{n}}{(\bmq'\bm1_{n})^2}\bmD_{q}^{1-\alpha}\bm1_{n}\bm1_{n}'\bmD_{q}^{1-\alpha}
    - \frac{1}{\sqrt{n}} \frac{\bmD_{q}^{1-\alpha}\bm1_{n}\bm1_{n}'\bmX\bmD_{q}^{-\alpha}}{\bmq'\bm1_{n}}
    \\&\quad\nonumber
    - \frac{1}{\sqrt{n}} \frac{\bmD_{q}^{-\alpha}\bmX \bm1_{n} \bm1_{n}'\bmD_{q}^{1-\alpha}}{\bmq'\bm1_{n}}
    + \frac{1}{\sqrt{n}} \frac{\bm1_{n}'\bmX\bm1_{n}}{(\bmq'\bm1_{n})^2} \bmD_{q}^{1-\alpha} \bm1_{n} \bm1_{n}' \bmD_{q}^{1-\alpha}
    - \bmD_{q}^{-\alpha}\remthree\bmD_{q}^{-\alpha}
    + \remfour
    ,
\end{align*}
and noting that $\bm1_{n} = \bmJ\bm1_{K}$ and $\bmD_{q}\bm1_{n} = \bmq$, we have
\newcommand{\remfive}{{\highlighted{\bmDelta_{5}}}}
\begin{align}
    \bmL_{\alpha}
    &
    =
    \bmD_{q}^{-\alpha}\frac{\bmX}{\sqrt{n}}\bmD_{q}^{-\alpha}
    + \frac{1}{n} \bmD_{q}^{1-\alpha}\bmJ\bmM\bmJ'\bmD_{q}^{1-\alpha}
    - \frac{1}{n} \bmD_{q}^{1-\alpha}\bmJ\bm1_{K}\left(\frac{\bmJ'\bmq}{\bm1_{n}'\bmq}\right)'\bmM\bmJ'\bmD_{q}^{1-\alpha}
    \\&\quad \nonumber
    - \frac{1}{n} \bmD_{q}^{1-\alpha}\bmJ\bmM\left(\frac{\bmJ'\bmq}{\bm1_{n}'\bmq}\right)\bm1_{K}'\bmJ'\bmD_{q}^{1-\alpha}
    + \frac{1}{n} \bmD_{q}^{1-\alpha}\bmJ\bm1_{K}\left(\frac{\bmJ'\bmq}{\bm1_{n}'\bmq}\right)'\bmM\left(\frac{\bmJ'\bmq}{\bm1_{n}'\bmq}\right)\bm1_{K}'\bmJ'\bmD_{q}^{1-\alpha}
    \\&\quad \nonumber
    - \frac{1}{\sqrt{n}\bmq'\bm1_{n}} \bmD_{q}^{1-\alpha}\bmJ \bm1_{K} \bm1_{n}'\bmX\bmD_{q}^{-\alpha}
    - \frac{1}{\sqrt{n}\bmq'\bm1_{n}} \bmD_{q}^{-\alpha}\bmX \bm1_{n} \bm1_{K}'\bmJ' \bmD_{q}^{1-\alpha}
    \\&\quad \nonumber
    + \frac{1}{\sqrt{n}} \frac{\bm1_{n}'\bmX\bm1_{n}}{(\bmq'\bm1_{n})^2} \bmD_{q}^{1-\alpha} \bm1_{n} \bm1_{n}' \bmD_{q}^{1-\alpha}
    - \bmD_{q}^{-\alpha}\remthree\bmD_{q}^{-\alpha}
    + \remfour
    \\& \nonumber
    =
    \btlL_{\alpha}
    + \remfive
    ,
\end{align}
where
\begin{equation}
    \remfive
    \coloneqq
    \frac{1}{\sqrt{n}} \frac{\bm1_{n}'\bmX\bm1_{n}}{(\bmq'\bm1_{n})^2} \bmD_{q}^{1-\alpha} \bm1_{n} \bm1_{n}' \bmD_{q}^{1-\alpha}
    -
    \bmD_{q}^{-\alpha}\remthree\bmD_{q}^{-\alpha}
    +
    \remfour
    .
\end{equation}
By the triangle inequality and by the homogeneity of norms, we have
\begin{align}
    \ellP{\opnorm{\remfive}}
    &
    \leq
    \ellP{\opnorm{\frac{1}{\sqrt{n}} \frac{\bm1_{n}'\bmX\bm1_{n}}{(\bmq'\bm1_{n})^2} \bmD_{q}^{1-\alpha} \bm1_{n} \bm1_{n}' \bmD_{q}^{1-\alpha}}}
    + \ellP{\opnorm{\bmD_{q}^{-\alpha}\remthree\bmD_{q}^{-\alpha}}}
    + \ellP{\opnorm{\remfour}}
    \\& \nonumber
    =
    \opnorm{ \frac{1}{\sqrt{n}} \frac{1}{(\bmq'\bm1_{n})^2} \bmD_{q}^{1-\alpha} \bm1_{n} \bm1_{n}' \bmD_{q}^{1-\alpha} } \ellP{\bm1_{n}'\bmX\bm1_{n}}
    + \ellP{\opnorm{\bmD_{q}^{-\alpha}\remthree\bmD_{q}^{-\alpha}}}
    + \ellP{\opnorm{\remfour}}
    ,
\end{align}
and so we now bound each of these terms as follows.
\begin{enumerate}
    \item
    Since the elements of $\bmD_{q}$ are bounded and $\alpha$ is fixed and $\bmq'\bm1_{n} \gtrsim n$, it holds deterministically that
    \begin{equation*}
        \opnorm{
            \frac{1}{\sqrt{n}}
            \frac{1}{(\bmq'\bm1_{n})^2}
            \bmD_{q}^{1-\alpha} \bm1_{n} \bm1_{n}' \bmD_{q}^{1-\alpha}
        }
        \leq
        \frac{1}{\sqrt{n}}
        \frac{1}{(\bmq'\bm1_{n})^2}
        \opnorm{ \bmD_{q}^{1-\alpha} }^2 \opnorm{ \bm1_{n} \bm1_{n}' }
        \lesssim
        \frac{1}{\sqrt{n}} \cdot \frac{1}{n^{2}} \cdot 1 \cdot n
        = n^{-3/2}
        .
    \end{equation*}
    Thus, recalling that $\ellP{\bm1_{n}'\bmX\bm1_{n}} \lesssim \subG{\bm1_{n}'\bmX\bm1_{n}} \lesssim n$, we have
    \begin{equation}
        \opnorm{ \frac{1}{\sqrt{n}} \frac{1}{(\bmq'\bm1_{n})^2} \bmD_{q}^{1-\alpha} \bm1_{n} \bm1_{n}' \bmD_{q}^{1-\alpha} } \ellP{\bm1_{n}'\bmX\bm1_{n}}
        \lesssim
        n^{-3/2} \cdot n
        = n^{-1/2}
        .
    \end{equation}

    \item
    For the second term, note that $\ellP{\opnorm{\bmD_{q}^{-\alpha}\remthree\bmD_{q}^{-\alpha}}} \leq \opnorm{\bmD_{q}^{-\alpha}}^2 \ellP{\opnorm{\remthree}}$ by submultiplicativity of $\opnorm{\cdot}$ and homogeneity of $\ellP{\cdot}$.
    So, it follows from \cref{eq:bound_remthree} that
    \begin{equation}
        \ellP{\opnorm{\bmD_{q}^{-\alpha}\remthree\bmD_{q}^{-\alpha}}} \lesssim
        n^{-1/2}
        .
    \end{equation}

    \item For the final term, note that,
    \begin{align}
        \label{eq:remfour:expand}
        \ellP{\opnorm{ \remfour }}
        &
        \leq
        \bigg\| \bigg\|
                \lowone
                [ \bmD_{q}^{-\alpha} + \lowtwo ]
                \left[\frac{1}{\sqrt{n}} \left( \bmA - \frac{\bmd\bmd'}{\bmd'\bm1_{n}} \right)\right]
                [ \bmD_{q}^{-\alpha} + \lowtwo ]
        \bigg\|_{\op} \bigg\|_{\mathsf{L}_{\mmt}}
        \\&\qquad \nonumber
        +
        \ellP{\opnorm{
                \lowtwo
                \left[\frac{1}{\sqrt{n}} \left( \bmA - \frac{\bmd\bmd'}{\bmd'\bm1_{n}} \right)\right]
                \bmD_{q}^{-\alpha}
        }}
        \\&\qquad \nonumber
        +
        \ellP{\opnorm{
                \bmD_{q}^{-\alpha}
                \left[\frac{1}{\sqrt{n}} \left( \bmA - \frac{\bmd\bmd'}{\bmd'\bm1_{n}} \right)\right]
                \lowtwo
        }}
        \\&\qquad \nonumber
        +
        \ellP{\opnorm{
                \lowtwo
                \left[\frac{1}{\sqrt{n}} \left( \bmA - \frac{\bmd\bmd'}{\bmd'\bm1_{n}} \right)\right]
                \lowtwo
        }}
        .
    \end{align}

    It remains to bound each line.
    \begin{enumerate}
        \item By submultiplicativity of $\opnorm{\cdot}$ and two applications of the Cauchy--Schwarz inequality, we obtain
        \begin{align}
            \label{eq:remfour:expand:1}
            &
            \bigg\| \bigg\|
                    \lowone
                    [ \bmD_{q}^{-\alpha} + \lowtwo ]
                    \left[\frac{1}{\sqrt{n}} \left( \bmA - \frac{\bmd\bmd'}{\bmd'\bm1_{n}} \right)\right]
                    [ \bmD_{q}^{-\alpha} + \lowtwo ]
            \bigg\|_{\op} \bigg\|_{\mathsf{L}_{\mmt}}
            \\&\qquad \nonumber
            \leq
            \bigg\|
                |\lowone|
                \opnorm{ \bmD_{q}^{-\alpha} + \lowtwo }^{2}
                \opnorm{ \frac{1}{\sqrt{n}} \left( \bmA - \frac{\bmd\bmd'}{\bmd'\bm1_{n}} \right) }
            \bigg\|_{\mathsf{L}_{\mmt}}
            \\&\qquad \nonumber
            \leq
            \ellP[2\mmt]{
                |\lowone|
                \opnorm{ \bmD_{q}^{-\alpha} + \lowtwo }^{2}
            }
            \ellP[2\mmt]{
                \opnorm{ \frac{1}{\sqrt{n}} \left( \bmA - \frac{\bmd\bmd'}{\bmd'\bm1_{n}} \right) }
            }
            \\&\qquad \nonumber
            \leq
            \ellP[4\mmt]{\lowone}
            \ellP[4\mmt]{\opnorm{ \bmD_{q}^{-\alpha} + \lowtwo }^{2}}
            \ellP[2\mmt]{
                \opnorm{ \frac{1}{\sqrt{n}} \left( \bmA - \frac{\bmd\bmd'}{\bmd'\bm1_{n}} \right) }
            }
            \\&\qquad \nonumber
            =
            \ellP[4\mmt]{\lowone}
            \ellP[8\mmt]{\opnorm{ \bmD_{q}^{-\alpha} + \lowtwo }}^{2}
            \ellP[2\mmt]{
                \opnorm{ \frac{1}{\sqrt{n}} \left( \bmA - \frac{\bmd\bmd'}{\bmd'\bm1_{n}} \right) }
            }
            \\&\qquad \nonumber
            \leq
            \ellP[4\mmt]{\lowone}
            \left(\opnorm{ \bmD_{q}^{-\alpha}} + \ellP[8\mmt]{\opnorm{ \lowtwo }}\right)^{2}
            \ellP[2\mmt]{
                \opnorm{ \frac{1}{\sqrt{n}} \left( \bmA - \frac{\bmd\bmd'}{\bmd'\bm1_{n}} \right) }
            }
            \\&\qquad \nonumber
            \lesssim
            n^{-1/2}
            \cdot
            \left(1 + \left(\frac{\log n}{n}\right)^{1/2}\right)^{2}
            \cdot
            1
            \\&\qquad \nonumber
            \lesssim
            n^{-1/2}
            .
        \end{align}

        \item Note that, by the same approach as above,
        \begin{align}
            \label{eq:remfour:expand:2}
            &
            \ellP{\opnorm{
                    \lowtwo
                    \left[\frac{1}{\sqrt{n}} \left( \bmA - \frac{\bmd\bmd'}{\bmd'\bm1_{n}} \right)\right]
                    \bmD_{q}^{-\alpha}
            }}
            +
            \ellP{\opnorm{
                    \bmD_{q}^{-\alpha}
                    \left[\frac{1}{\sqrt{n}} \left( \bmA - \frac{\bmd\bmd'}{\bmd'\bm1_{n}} \right)\right]
                    \lowtwo
            }}
            \\&\qquad \nonumber
            \leq
            2
            \ellP{
                \opnorm{ \lowtwo }
                \opnorm{ \frac{1}{\sqrt{n}} \left( \bmA - \frac{\bmd\bmd'}{\bmd'\bm1_{n}} \right) }
                \opnorm{ \bmD_{q}^{-\alpha} }
            }
            \\&\qquad \nonumber
            =
            2
            \opnorm{ \bmD_{q}^{-\alpha} }
            \ellP{
                \opnorm{ \lowtwo }
                \opnorm{ \frac{1}{\sqrt{n}} \left( \bmA - \frac{\bmd\bmd'}{\bmd'\bm1_{n}} \right) }
            }
            \\&\qquad \nonumber
            \leq
            2
            \opnorm{ \bmD_{q}^{-\alpha} }
            \ellP[2\mmt]{\opnorm{ \lowtwo }}
            \ellP[2\mmt]{\opnorm{ \frac{1}{\sqrt{n}} \left( \bmA - \frac{\bmd\bmd'}{\bmd'\bm1_{n}} \right) }}
            \\&\qquad \nonumber
            \lesssim
            2
            \cdot 1
            \cdot \left(\frac{\log n}{n}\right)^{1/2}
            \cdot 1
            \\&\qquad \nonumber
            \lesssim
            \left(\frac{\log n}{n}\right)^{1/2}
            .
        \end{align}

        \item Note that
        \begin{align}
            \label{eq:remfour:expand:3}
            &
            \ellP{\opnorm{
                    \lowtwo
                    \left[\frac{1}{\sqrt{n}} \left( \bmA - \frac{\bmd\bmd'}{\bmd'\bm1_{n}} \right)\right]
                    \lowtwo
            }}
            \\&\qquad \nonumber
            \leq
            \ellP{
                \opnorm{ \lowtwo }^2
                \opnorm{ \frac{1}{\sqrt{n}} \left( \bmA - \frac{\bmd\bmd'}{\bmd'\bm1_{n}} \right) }
            }
            \\&\qquad \nonumber
            \leq
            \ellP[2\mmt]{\opnorm{ \lowtwo }^2}
            \ellP[2\mmt]{\opnorm{ \frac{1}{\sqrt{n}} \left( \bmA - \frac{\bmd\bmd'}{\bmd'\bm1_{n}} \right) }}
            \\&\qquad \nonumber
            =
            \ellP[4\mmt]{\opnorm{ \lowtwo }}^2
            \ellP[2\mmt]{\opnorm{ \frac{1}{\sqrt{n}} \left( \bmA - \frac{\bmd\bmd'}{\bmd'\bm1_{n}} \right) }}
            \\&\qquad \nonumber
            \lesssim
            \left(\left(\frac{\log n}{n}\right)^{1/2}\right)^2 \cdot 1
            \\&\qquad \nonumber
            =
            \frac{\log n}{n}
            .
        \end{align}
    \end{enumerate}
    Combining \cref{eq:remfour:expand,eq:remfour:expand:1,eq:remfour:expand:2,eq:remfour:expand:3} yields
    \begin{equation}
        \ellP{\opnorm{ \remfour }}
        \lesssim
        \left(\frac{\log n}{n}\right)^{1/2}
        .
    \end{equation}
\end{enumerate}
Thus, we finally have
\begin{equation}
    \ellP{\opnorm{ \remfive }}
    \lesssim
    \left(\frac{\log n}{n}\right)^{1/2}
    ,
\end{equation}
which establishes \cref{eq:DCSBM:approx:L:decay} as desired.
The above proof argument readily extends to establish \cref{eq:DCSBM:approx:L:signflip:decay}, using the following observations.
\begin{itemize}
    \item For any
    $\bmR \in \{-1,1\}^{n \times n}$,
    $\bmu \in \bbR^n$,
    and
    $\bmv \in \bbR^n$,
    \begin{equation*}
        \| \bmR \circ (\bmu \bmv') \|_{\op}
        \leq
        \| \bmR \circ (\bmu \bmv') \|_{\frob}
        =
        \| \bmu \bmv' \|_{\frob}
        =
        \| \bmu \bmv' \|_{\op}
        .
    \end{equation*}

    \item For any
    $\bmR \in \{-1,1\}^{n \times n}$,
    $\bmA \in \bbR^{n \times n}$,
    $\bmd_1 \in \bbR^n$,
    and
    $\bmd_2 \in \bbR^n$,
    \begin{align*}
        \bmR \circ (\clD(\bmd_1) \bmA \clD(\bmd_2))
        &
        = \bmR \circ ((\bmd_1 \bm1') \circ \bmA \circ (\bm1 \bmd_2'))
        = \bmR \circ (\bmd_1 \bm1') \circ \bmA \circ (\bm1 \bmd_2')
        \\&
        = (\bmd_1 \bm1') \circ \bmR \circ \bmA \circ (\bm1 \bmd_2')
        = \clD(\bmd_1) (\bmR \circ \bmA) \clD(\bmd_2)
        .
    \end{align*}

    \item Numerous matrix terms throughout the above proof are rank one.
    In particular, all matrices in the definition of $\remthree$ are rank one.

    \item Signflip matrices do not alter the maximum absolute row sum upper bound on the operator norm.
    In particular, similar to the proof of \cref{prop:prelim:bounds}, with probability one it holds that $\|\bmR \circ \bmX\|_{\op} \le \sqrt{\|\bmR \circ \bmX\|_{1} \|\bmR \circ \bmX\|_{\infty}} = \sqrt{\|\bmX\|_{1} \|\bmX\|_{\infty}} \lesssim n$.

    \item The Hadamard product of a signflip matrix with a symmetric matrix of sub-Gaussian entries remains sub-Gaussian.
    In particular, the stated bounds in \cref{prop:prelim:bounds} hold with or without signflipping the underlying matrices.
\end{itemize}
This completes the proof of \cref{thm:DCSBM:approx:L:signflip:decay}.
\qed

\section{Proof of \cref{thm:DCSBM:2_infty}}
\label{proof:DCSBM:2_infty}
Recall from \cref{eq:DCSBM:signalparts} that
\begin{align*}
    \bmU &\coloneqq
    \begin{bmatrix}
        \frac{\bmD_{q}^{1-\alpha} \bmJ}{\sqrt{n}} &
        \frac{\bmD_{q}^{ -\alpha} \bmX \bm1_n}{\bmq' \bm1_n}
    \end{bmatrix}
    , &
    \bmLambda &\coloneqq
    \begin{bmatrix}
        (\bmI_K - \bm1_K \bbvc') \bmM (\bmI_K - \bbvc \bm1_K') & -\bm1_K \\
        -\bm1_K' & 0
    \end{bmatrix}
    , &
    \bbvc &\coloneqq \frac{\bmJ'\bmq}{\bm1_{n}'\bmq}
    .
\end{align*}
Let $\btlU$ be the first $K$ columns of $\bmU$,
$\bmu_{K+1}$ be the $(K+1)$-st column of $\bmU$,
and
$\btlLambda$ be the upper-left $K \times K$ submatrix of $\bmLambda$.
Then,
\begin{equation}
    \btlS_{\alpha}
    =
    \bmU \bmLambda \bmU'
    =
    \btlU \btlLambda \btlU'
    - \btlU \bm1_{K} \bmu_{K+1}'
    - \bmu_{K+1} \bm1_{K}' \btlU'
    .
\end{equation}
Applying the triangle inequality and simplifying yields
\begin{align}
    \left\| \| \btlS_{\alpha} \|_{2, \infty} \right\|_{\psi_{2}}
    &
    \leq
    \left\| \| \btlU \btlLambda \btlU' \|_{2, \infty} \right\|_{\psi_{2}}
    + \left\| \| \btlU \bm1_{K} \bmu_{K+1}' \|_{2, \infty} \right\|_{\psi_{2}}
    + \left\| \| \bmu_{K+1} \bm1_{K}' \btlU' \|_{2, \infty} \right\|_{\psi_{2}}
    \\& \nonumber
    =
    \left\| \| \btlU \btlLambda \btlU' \|_{2, \infty} \right\|_{\psi_{2}}
    + \left\| \| \btlU \bm1_{K} \|_{\infty} \| \bmu_{K+1} \|_{2} \right\|_{\psi_{2}}
    + \left\| \| \bmu_{K+1} \|_{\infty} \| \btlU \bm1_{K} \|_{2} \right\|_{\psi_{2}}
    \\& \nonumber
    =
    \| \btlU \btlLambda \btlU' \|_{2, \infty}
    + \| \btlU \bm1_{K} \|_{\infty} \Big\| \| \bmu_{K+1} \|_{2} \Big\|_{\psi_{2}}
    + \| \btlU \bm1_{K} \|_{2} \Big\| \| \bmu_{K+1} \|_{\infty} \Big\|_{\psi_{2}}
    ,
\end{align}
where we have used the fact that $\btlU$ and $\btlLambda$
are deterministic.
We start by bounding the deterministic terms.
Note that
\begin{equation*}
    |(\btlU \bm1_{K})_{i}|
    = \left| \left( \frac{\bmD_{q}^{1-\alpha} \bmJ}{\sqrt{n}} \bm1_{K} \right)_{i} \right|
    = \left| \left( \frac{\bmD_{q}^{1-\alpha} \bm1_{n}}{\sqrt{n}} \right)_{i} \right|
    = \frac{q_{i}^{1-\alpha}}{\sqrt{n}}
    \lesssim
    \frac{1}{\sqrt{n}}
    ,
\end{equation*}
since
\begin{equation*}
    \max_{\ell=1,\dots,n} q_\ell^{1-\alpha}
    \leq
    \begin{cases}
        \qmax^{1-\alpha} & \text{if } \alpha \leq 1, \\
        \qmin^{1-\alpha} & \text{if } \alpha > 1,
    \end{cases}
\end{equation*}
which is constant with respect to $n$.
Thus,
\begin{align}
    \| \btlU \bm1_{K} \|_{\infty}
    &\lesssim
    \frac{1}{\sqrt{n}}
    , &
    \| \btlU \bm1_{K} \|_{2}
    &\lesssim
    1
    .
\end{align}
For the final deterministic term,
since $\btlU_{i:} = ( q_i^{1-\alpha} / \sqrt{n} ) \bme_{g_i}' \in \bbR^{K}$,
we have
\begin{align*}
    \left| (\btlU \btlLambda \btlU')_{ij} \right|
    =
    \left|
        ( q_i^{1-\alpha} / \sqrt{n} ) \bme_{g_i}'
        \btlLambda
        ( q_j^{1-\alpha} / \sqrt{n} ) \bme_{g_j}
    \right|
    =
    \frac{q_{i}^{1-\alpha} q_{j}^{1-\alpha}}{n}
    \left| \bme_{g_i}' \btlLambda \bme_{g_j} \right|
    =
    \frac{q_{i}^{1-\alpha} q_{j}^{1-\alpha}}{n}
    \left| \tllambda_{g_i g_j} \right|
    .
\end{align*}
Next, since $\|\bmM\|_{\max} \lesssim 1$ and $\bbvc$ is on the unit simplex, it holds that
\begin{align*}
    \left|
        \tllambda_{g_{i}g_{j}}
    \right|
    &=
    \left|((\bmI_K - \bm1_K \bbvc') \bmM (\bmI_K - \bbvc \bm1_K'))_{g_{i}g_{j}}\right|\\
    &=
    \left|((\bmI_K - \bm1_K \bbvc') (\bmM - \bmM\bbvc \bm1_K'))_{g_{i}g_{j}}\right|\\
    &=
    \left|\left(
    \bmM - \bm1_K \bbvc' \bmM - \bmM\bbvc \bm1_K' + \bm1_K \bbvc' \bmM \bbvc \bm1_{K}'\right)_{g_{i}g_{j}}\right|\\
    &\leq
    |\bmM_{g_{i}g_{j}}|
    +
    |\bbvc'\bmM\bme_{g_{j}}|
    +
    |\bme_{g_{i}}'\bmM\bbvc|
    +
    |\bbvc' \bmM \bbvc|\\
    &\le
    4 \|\bmM\|_{\max}\\
    &\lesssim
    1
    .
\end{align*}
Thus,
\begin{align*}
    \left| (\btlU \btlLambda \btlU')_{ij} \right|
    =
    \frac{q_{i}^{1-\alpha} q_{j}^{1-\alpha}}{n}
    \left| \tllambda_{g_i g_j} \right|
    \lesssim \frac{1}{n}
    ,
\end{align*}
and so
\begin{equation}
    \| \btlU \btlLambda \btlU' \|_{2, \infty}
    = \max_{i=1,\dots,n} \left[ \sum_{j = 1}^{n} \left| (\btlU \btlLambda \btlU')_{ij} \right|^2 \right]^{1/2}
    \lesssim \frac{1}{\sqrt{n}}
    .
\end{equation}

We now bound the random terms.
Note first that the entries of $\bmu_{K+1}$
are mean-zero sub-Gaussian random variables
with sub-Gaussian norm
bounded as
\begin{equation}
    \| (\bmu_{K+1})_{i} \|_{\psi_2}^2
    =
    \left\|
        \left(
            \frac{\bmD_{q}^{ -\alpha} \bmX \bm1_n}{\bmq' \bm1_n}
        \right)_{i}
    \right\|_{\psi_2}^2
    =
    \left\|
        \frac{q_i^{-\alpha}}{\bmq' \bm1_n}
        \sum_{j=1}^n X_{ij}
    \right\|_{\psi_2}^2
    =
    \left(
        \frac{q_i^{-\alpha}}{\bmq' \bm1_n}
    \right)^2
    \left\|
        \sum_{j=1}^n X_{ij}
    \right\|_{\psi_2}^2
    \lesssim
    \frac{1}{n^2}
    \left\|
        \sum_{j=1}^n X_{ij}
    \right\|_{\psi_2}^2
    ,
\end{equation}
where we have used the fact that
\begin{equation*}
    \max_{i=1,\dots,n} \left| \frac{q_i^{-\alpha}}{\bmq'\bm1_{n}} \right|
    \leq
    \begin{cases}
        \qmax^{-\alpha} / (n \cdot \qmin) & \text{if } \alpha \leq 0, \\
        \qmin^{-\alpha} / (n \cdot \qmin) & \text{if } \alpha > 0,
    \end{cases}
\end{equation*}
whence $\max_{i=1,\dots,n} \left| q_i^{-\alpha} / (\bmq'\bm1_{n}) \right| \lesssim 1/n$.
Now, from \cite[Example 2.5.8(iii)]{vershynin2018hdp} and the assumption $\|\bmM\|_{\max} \lesssim 1$, it holds that
\begin{equation*}
    \|X_{ij}\|_{\psi_{2}}
    \lesssim
    \|X_{ij}\|_\infty
    =
    \left\| A_{ij} - q_i q_j - \frac{q_i q_j M_{g_i g_j}}{\sqrt{n}} \right\|_\infty
    \leq
    \left\| A_{ij} - q_i q_j \right\|_\infty + \left| \frac{q_i q_j M_{g_i g_j}}{\sqrt{n}} \right|
    \leq
    1 + \|\bmM\|_{\max}
    \lesssim 1
    .
\end{equation*}
Thus, since
$X_{ij}$ are mean-zero and independent across $j$,
it follows from \cite[Proposition 2.6.1]{vershynin2018hdp} that
\begin{equation}
    \| (\bmu_{K+1})_{i} \|_{\psi_2}^2
    \lesssim
    \frac{1}{n^2}
    \left\|
        \sum_{j=1}^n X_{ij}
    \right\|_{\psi_2}^2
    \lesssim
    \frac{1}{n^2}
    \sum_{j=1}^n \| X_{ij} \|_{\psi_2}^2
    \lesssim
    \frac{1}{n}
    .
\end{equation}
As a result, $\| \bmu_{K+1} \|_{2}$ is sub-Gaussian since
\begin{equation}
    \Big\| \| \bmu_{K+1} \|_{2} \Big\|_{\psi_2}^{2}
    =
    \Big\| \| \bmu_{K+1} \|_{2}^{2} \Big\|_{\psi_1}
    =
    \left\| \sum_{i=1}^n |(\bmu_{K+1})_{i}|^2 \right\|_{\psi_1}
    \leq
    \sum_{i=1}^n \Big\| |(\bmu_{K+1})_{i}|^2 \Big\|_{\psi_1}
    =
    \sum_{i=1}^n \| (\bmu_{K+1})_{i} \|_{\psi_2}^{2}
    \lesssim 1
    .
\end{equation}
Similarly,
it follows from \cref{lem:max:subgaussian}
that $\|\bmu_{K+1}\|_\infty$ is also sub-Gaussian and satisfies
\begin{equation}
    \Big\| \|\bmu_{K+1}\|_\infty \Big\|_{\psi_{2}}
    =
    \left\| \max_{i=1,\dots,n} |(\bmu_{K+1})_{i}| \right\|_{\psi_{2}}
    \lesssim
    \left( \max_{i=1,\dots,n} \| (\bmu_{K+1})_{i} \|_{\psi_{2}} \right) \sqrt{\log n}
    \lesssim
    \sqrt{\frac{\log n}{n}}
    .
\end{equation}
Combining the above observations yields the stated bound for the signal term, namely
\begin{align}
    \left\| \| \btlS_{\alpha} \|_{2, \infty} \right\|_{\psi_{2}}
    &\leq
    \| \btlU \btlLambda \btlU' \|_{2, \infty}
    +
    \| \btlU \bm1_{K} \|_{\infty}
    \cdot
    \Big\| \| \bmu_{K+1} \|_{2}
    \Big\|_{\psi_{2}}
    +
    \| \btlU \bm1_{K} \|_{2}
    \cdot
    \Big\| \| \bmu_{K+1} \|_{\infty}
    \Big\|_{\psi_{2}}
    \\&\lesssim \nonumber
    \frac{1}{\sqrt{n}}
    +
    \frac{1}{\sqrt{n}}
    \cdot
    1
    +
    1
    \cdot
    \sqrt{\frac{\log n}{n}}
    \\&\lesssim \nonumber
    \sqrt{\frac{\log n}{n}}
    .
\end{align}

To bound the signflipped signal, we adopt a similar proof strategy as above.
Note that $\bmR \circ (\btlU \btlLambda \btlU')$ is a symmetric random matrix whose entries are independent mean-zero sub-Gaussian (bounded) random variables.
Thus, applying \cref{lem:opnorm:subgaussian} and the earlier uniform entrywise deterministic bound yields
\begin{equation}
    \left\|
        \big\|
            \bmR \circ (\btlU \btlLambda \btlU')
        \big\|_{\op}
    \right\|_{\psi_{2}}
    \lesssim
    \left( \max_{i,j=1,\dots,n} \| R_{ij} (\btlU \btlLambda \btlU')_{ij} \|_{\psi_{2}} \right) \sqrt{n}
    \lesssim
    \left( \max_{i,j=1,\dots,n} |(\btlU \btlLambda \btlU')_{ij} | \right) \sqrt{n}
    \lesssim
    \frac{1}{\sqrt{n}}
    .
\end{equation}
Next, observe that
\begin{align*}
    \left\|
        \bmR \circ (\btlU \bm1_{K} \bmu_{K+1}')
    \right\|_{\op}
    &
    =
    \left\|
        \diag(\btlU \bm1_{K})
        \bmR
        \diag(\bmu_{K+1})
    \right\|_{\op}
    \\
    &\leq
    \left\|
        \diag(\btlU \bm1_{K})
    \right\|_{\op}
    \left\|
        \bmR
    \right\|_{\op}
    \left\|
        \diag(\bmu_{K+1})
    \right\|_{\op}
    =
    \|\btlU \bm1_{K}\|_{\infty}
    \left\| \bmR \right\|_{\op}
    \|\bmu_{K+1}\|_{\infty}
    .
\end{align*}
By \cref{lem:opnorm:subgaussian}, $\| \bmR \|_{\op}$ is sub-Gaussian with $\big\| \| \bmR \|_{\op} \big\|_{\psi_2} \lesssim \sqrt{n}$.
Thus, it follows
from \cite[Lemma 2.7.7]{vershynin2018hdp}
that
\begin{equation}
    \left\|
        \big\|
            \bmR \circ (\btlU \bm1_{K} \bmu_{K+1}')
        \big\|_{\op}
    \right\|_{\psi_{1}}
    \leq
    \|\btlU \bm1_{K}\|_{\infty}
    \big\| \| \bmR \|_{\op} \big\|_{\psi_{2}}
    \big\| \|\bmu_{K+1}\|_{\infty} \big\|_{\psi_{2}}
    \lesssim
    \frac{1}{\sqrt{n}}
    \cdot
    \sqrt{n}
    \cdot
    \sqrt{\frac{\log n}{n}}
    = \sqrt{\frac{\log n}{n}}
    .
\end{equation}
By the same approach, it follows that
\begin{equation}
    \left\|
        \|
            \bmR \circ (\bmu_{K+1} \bm1_{K}' \btlU')
        \|_{\op}
    \right\|_{\psi_{1}}
    \lesssim
    \sqrt{\frac{\log n}{n}}
    .
\end{equation}
Finally, since $\|\cdot\|_{\psi_{1}} \le \|\cdot\|_{\psi_{2}}$ holds for sub-Gaussian random variables, we obtain
\begin{align}
    \left\|
        \|\bmR \circ \btlS_{\alpha}\|_{\op}
    \right\|_{\psi_{1}}
    &
    \leq
    \left\|
        \|\bmR \circ (\btlU \btlLambda \btlU')\|_{\op}
    \right\|_{\psi_{1}}
    +
    \left\|
        \|\bmR \circ (\btlU \bm1_{K} \bmu_{K+1}')\|_{\op}
    \right\|_{\psi_{1}}
    +
    \left\|
        \|\bmR \circ (\bmu_{K+1} \bm1_{K}' \btlU')\|_{\op}
    \right\|_{\psi_{1}}
    \\& \nonumber
    \lesssim
    \frac{1}{\sqrt{n}}
    +
    \sqrt{\frac{\log n}{n}}
    +
    \sqrt{\frac{\log n}{n}}
    \\& \nonumber
    \lesssim
    \sqrt{\frac{\log n}{n}}
    .
\end{align}
This completes the proof of \cref{thm:DCSBM:2_infty}.
\qed

\section{Proof of \cref{thm:DCSBM:noise:signflip:preserve}}
\label{proof:DCSBM:noise:signflip:preserve}
Recall that
\begin{equation*}
    \btlN_{\alpha}
    =
    \frac{1}{\sqrt{n}} \bmD_{q}^{-\alpha} \bmX \bmD_{q}^{-\alpha}
    ,
\end{equation*}
where $\bmD_{q} \coloneqq \diag(q_1,\dots,q_n)$
and
\begin{equation*}
    \bmX
    =
    \bmA
    -
    \left(
        \bmq \bmq' + \frac{1}{\sqrt{n}} \bmq \bmq' \circ \bmJ \bmM \bmJ'
    \right)
    =
    \bmA
    -
    \left(
        \bmq \bmq' + \frac{1}{\sqrt{n}} \bmB
    \right)
    .
\end{equation*}
Thus,
\begin{equation}
    \btlN_{\alpha}
    = \frac{1}{\sqrt{n}} \bmD_{q}^{-\alpha} \bmX \bmD_{q}^{-\alpha}
    =
    \frac{1}{\sqrt{n}}
    \bmD_{q}^{-\alpha}
    \left[
        \bmA
        -
        \left(
            \bmq \bmq' + \frac{1}{\sqrt{n}} \bmB
        \right)
    \right]
    \bmD_{q}^{-\alpha}
    ,
\end{equation}
and for any $\bmq$,
\begin{align}
    \bbE\, \btlN_{\alpha}
    &
    = \bm0_{n \times n}
    , \\
    \bbE |(\btlN_{\alpha})_{ij}|^2
    &
    =
    \frac{1}{n}
    q_{i}^{-2\alpha}
    q_{j}^{-2\alpha}
    P_{ij}(1-P_{ij})
    =
    \frac{1}{n}
    q_{i}^{-2\alpha}
    q_{j}^{-2\alpha}
    q_{i}q_{j}C_{g_{i}g_{j}}(1-q_{i}q_{j}C_{g_{i}g_{j}})
    \\& \nonumber
    =
    \frac{1}{n}
    q_{i}^{-2\alpha}
    q_{j}^{-2\alpha}
    q_{i}q_{j}\left(1+\frac{M_{g_{i}g_{j}}}{\sqrt{n}}\right)\left(1-q_{i}q_{j}\left(1+\frac{M_{g_{i}g_{j}}}{\sqrt{n}}\right)\right)
    .
\end{align}

In what follows, we proceed to analyze
\begin{align*}
    \|\btlN_{\alpha}\|_{\op}
    =
    \max\{ |\lambda_{1}(\btlN_{\alpha})|, |\lambda_{N}(\btlN_{\alpha})| \}.
\end{align*}
For convenience,
we will work with a rescaled version
\begin{equation*}
    \bbrN_{\alpha}
    \coloneqq
    \qmin^{2\alpha} \cdot \btlN_{\alpha}
    ,
\end{equation*}
so that the second moments of the entries are bounded above by $1/n$.

We now verify that $\bbrN_{\alpha}$ satisfies \cite[Assumptions A--D]{ajanki2016ufg}.
\begin{description}
    \item[{\cite[Assumption A]{ajanki2016ufg}}]
    Follows immediately from the rescaling of $\bbrN_{\alpha}$.

    \item[{\cite[Assumption B]{ajanki2016ufg}}]
    For $n$ sufficiently large
    \begin{equation*}
        \left| \frac{M_{g_{i}g_{j}}}{\sqrt{n}} \right|
        \leq
        \min\left(
            \frac{1}{2},
            \frac{\qmax^{-2} - 1}{2}
        \right)
        ,
    \end{equation*}
    so that
    \begin{equation*}
        \tau
        \leq
        \frac{\qmin^2}{2}
        =
        \qmin^2 \left(1 - \frac{1}{2}\right)
        \leq
        P_{ij} = q_{i}q_{j}C_{g_{i}g_{j}}
        \leq
        \qmax^2 \left(1 + \frac{\qmax^{-2} - 1}{2}\right)
        =
        1 - \frac{1 - \qmax^{2}}{2}
        \leq
        1 - \tau
        ,
    \end{equation*}
    where $\tau \coloneqq \min(\qmin^2, 1 - \qmax^2)/2 > 0$.
    From this, it follows that
    \begin{equation}
        \bbE |(\bbrN_{\alpha})_{ij}|^2
        =
        \qmin^{4\alpha}
        \cdot
        \left[
            \frac{1}{n}
            q_{i}^{-2\alpha}
            q_{j}^{-2\alpha}
            P_{ij}(1-P_{ij})
        \right]
        \geq
        \qmin^{4\alpha}
        \cdot
        \left[
            \frac{1}{n}
            \qmax^{-4\alpha}
            \tau(1-\tau)
        \right]
        =
        \frac{(\qmin/\qmax)^{4\alpha} \tau(1-\tau)}{n}
        ,
    \end{equation}
    and so
    $\bbrN_{\alpha}$ also satisfies
    \cite[Assumption B]{ajanki2016ufg}
    with parameter $(\qmin/\qmax)^{4\alpha} \tau(1-\tau)$.

    \item[{\cite[Assumption C]{ajanki2016ufg}}]
    We shall verify this assumption by invoking \cite[Theorem 6.1]{ajanki2019qve}.
    In order to do so, in the notation of \cite{ajanki2019qve}, our (finite-dimensional) setting corresponds to:
    \begin{align*}
        \fkX &= \{1,\dots,n\}
        , \\
        \scB &= \bbC^{n} \quad \text{(up to a trivial isomorphism)}
        , \\
        L^2 = L^2(\fkX; \bbC) &= \bbC^{n} \quad \text{(up to a trivial isomorphism)}
        , \\
        \clS &= 2^{\fkX}
        , \\
        \pi(A) &= \frac{|A|}{n}
        , \quad \text{for } A \subseteq \fkX, \\
        \langle \bmu, \bmw \rangle &= \frac{1}{n} \bmu'\bmw
        , \\
        \langle \bmw \rangle &= \frac{1}{n} \bmw'\bm1_n
        , \\
        S &: \bmu \in \bbC^{n} \mapsto \bmV \bmu \in \bbC^{n}
        ,
    \end{align*}
    where $\bmV$ is the $n \times n$ matrix of variances,
    i.e., $V_{ij} = \bbE |(\bbrN_{\alpha})_{ij}|^2$.

    Thus, \cite[Assumption A.1]{ajanki2019qve} amounts to
    \begin{equation}
        \forall_{\bmu,\bmw \in \bbC^{n}} \; \forall_{\bmp \in \bbR^{n}_{\geq 0}} \quad
        \bmu' \bmV \bmw = \bmu' \bmV' \bmw
        \quad \text{and} \quad
        \bmV \bmp \in \bbR^{n}_{\geq 0}
        ,
    \end{equation}
    which follows immediately since $\bmV$ is symmetric and has real nonnegative entries.

    \cite[Assumption A.2]{ajanki2019qve} follows immediately
    by noting that $S$ is already a bounded operator
    from $L^2$ (aka. $\bbC^{n}$) to $\scB$ (aka. $\bbC^{n}$)
    that can be represented as follows:
    \begin{equation}
        (S\bmw)_{i}
        =
        (\bmV \bmw)_i
        =
        \sum_{j=1}^n K_{ij} w_{j} \frac{1}{n}
        ,
    \end{equation}
    where the kernel matrix is $\bmK = n \bmV$ is symmetric and nonnegative.

    \cite[Assumption A.3]{ajanki2019qve}
    holds with $L = 1$ and $\rho = (\qmin/\qmax)^{4\alpha} \tau(1-\tau)$ since
    for any $\bmu \in \scB$ satisfying $\bmu \geq 0$
    and any $i \in \{1,\dots,n\}$ we have
    \begin{align}
        (S \bmu)_i
        =
        (\bmV \bmu)_i
        =
        \sum_{j=1}^n V_{ij} u_j
        \geq
        \sum_{j=1}^n \frac{(\qmin/\qmax)^{4\alpha} \tau(1-\tau)}{n} u_j
        =
        (\qmin/\qmax)^{4\alpha} \tau(1-\tau) \cdot \langle \bmu \rangle
        =
        \rho \cdot \langle \bmu \rangle
        .
    \end{align}

    \cite[Assumption B.1]{ajanki2019qve} follows by taking
    $K = 1$, $\bmZ = [1] \in \{0,1\}^{1 \times 1}$ and $\clI = \{\fkX\}$,
    then noting that $\bmZ$ is trivially fully indecomposable since it contains no zeros,
    $\pi(I_1) = \pi(\fkX) = 1 = 1/K$,
    and
    \begin{align*}
        &
        \forall_{i,j \in \{1,\dots,n\}} \quad
        K_{ij} = n V_{ij} \geq (\qmin/\qmax)^{4\alpha} \tau(1-\tau) = \varphi Z_{11}
        ,
    \end{align*}
    where $\varphi \coloneqq (\qmin/\qmax)^{4\alpha} \tau(1-\tau)$.

    Finally, note that
    \begin{align}
        \Gamma(\infty)
        &
        \coloneqq
        \lim_{\tau \to \infty}
        \min_{1 \le i \le n} \sqrt{\sum_{j=1}^{n} \left(\frac{1}{\tau} + 0 + \|S_{j}-S_{i}\|_{2}\right)^{-2}\frac{1}{n}}
        \\& \nonumber
        =
        \lim_{\tau \to \infty}
        \min_{1 \le i \le n} \sqrt{\sum_{j=1}^{n} \left(\frac{1}{\tau} + \sqrt{n \|\bmV_{j,:} - \bmV_{i,:}\|_2^2}\right)^{-2}\frac{1}{n}}
        \\& \nonumber
        =
        \lim_{\tau \to \infty}
        \min_{1 \le i \le n}
        \sqrt{
            \left(\frac{1}{\tau} + \sqrt{n \|\bmV_{i,:} - \bmV_{i,:}\|_2^2}\right)^{-2}\frac{1}{n}
            +
            \sum_{j \neq i} \left(\frac{1}{\tau} + \sqrt{n \|\bmV_{j,:} - \bmV_{i,:}\|_2^2}\right)^{-2}\frac{1}{n}
        }
        \\& \nonumber
        \geq
        \lim_{\tau \to \infty}
        \min_{1 \le i \le n}
        \sqrt{
            \left(\frac{1}{\tau} + \sqrt{n \|\bmV_{i,:} - \bmV_{i,:}\|_2^2}\right)^{-2}\frac{1}{n}
        }
        =
        \lim_{\tau \to \infty}
        \min_{1 \le i \le n}
        \frac{\tau}{\sqrt{n}}
        =
        \lim_{\tau \to \infty}
        \frac{\tau}{\sqrt{n}}
        =
        \infty
        ,
    \end{align}
    where we have used the fact that
    \begin{align*}
        \|S_{j}-S_{i}\|_{2}^{2}
        &= \sum_{\ell=1}^{n} |(S_{j}-S_{i})_{\ell}|^{2} \frac{1}{n}\\
        &= \sum_{\ell=1}^{n} |(S_{j})_{\ell}-(S_{i})_{\ell}|^{2} \frac{1}{n}\\
        &= \sum_{\ell=1}^{n} |K_{j,\ell}-K_{i,\ell}|^{2} \frac{1}{n}\\
        &= \sum_{\ell=1}^{n} |n V_{j,\ell} - n V_{i,\ell}|^{2} \frac{1}{n}\\
        &= n \|\bmV_{j,:} - \bmV_{i,:}\|_{2}^{2}
        .
    \end{align*}

    Thus, it follows from \cite[Theorem 6.1]{ajanki2019qve} (noting that $a=0$ in our case) that
    for $i = 1,\dots,n$ and $z \in \bbH$, we have
    \begin{equation}
        |m_i(z)|
        \leq
        P
        \coloneqq
        \max\left\{
            \Phi, \frac{\delta}{2} \, \Gamma^{-1}\left(\frac{4}{\delta^2}\right)
        \right\}
        ,
    \end{equation}
    where $\Phi < \infty$ and $\delta > 0$ (and hence $P$) depend only on $\varphi$ and $K$,
    which are independent of $n$.

    \item[{\cite[Assumption D]{ajanki2016ufg}}]
    Follows by noting that
    for all $\mmt \in \bbN$
    and all $i,j = 1,\dots,n$
    we have
    \begin{align}
        \bbE |(\bbrN_{\alpha})_{ij}|^{\mmt}
        &
        =
        \bbE |(\qmin^{2\alpha} \cdot \btlN_{\alpha})_{ij}|^{\mmt}
        =
        \bbE \left|\left(
            \qmin^{2\alpha}
            \cdot
            \frac{1}{\sqrt{n}} \bmD_{q}^{-\alpha} \bmX \bmD_{q}^{-\alpha}
        \right)_{ij} \right|^{\mmt}
        \\& \nonumber
        =
        \bbE \left|
            \qmin^{2\alpha}
            \cdot
            \frac{1}{\sqrt{n}}
            q_{i}^{-\alpha}
            q_{j}^{-\alpha}
            X_{ij}
        \right|^{\mmt}
        =
        \left|
            \qmin^{2\alpha}
            \cdot
            \frac{1}{\sqrt{n}}
        \right|^{\mmt}
        \bbE \left|
            q_{i}^{-\alpha}
            q_{j}^{-\alpha}
            X_{ij}
        \right|^{\mmt}
        \\& \nonumber
        \leq
        \left|
            \qmin^{2\alpha}
            \cdot
            \frac{1}{\sqrt{n}}
        \right|^{\mmt}
        \left|
            \qmin^{-2\alpha}
        \right|^{\mmt}
        =
        \frac{1}{n^{\mmt/2}}
        =
        \left(
            \frac{(\qmin/\qmax)^{4\alpha} \tau(1-\tau)}{(\qmin/\qmax)^{4\alpha} \tau(1-\tau)}
            \frac{1}{n}
        \right)^{\mmt/2}
        \\& \nonumber
        =
        \mu_{\mmt}
        \left[
            \frac{(\qmin/\qmax)^{4\alpha} \tau(1-\tau)}{n}
        \right]^{\mmt/2}
        \leq
        \mu_{\mmt}
        \left[ \bbE |(\bbrN_{\alpha})_{ij}|^2 \right]^{\mmt/2}
        ,
    \end{align}
    where $\mu_{\mmt} \coloneqq [(\qmin/\qmax)^{4\alpha} \tau(1-\tau)]^{-\mmt/2}$
    and recall that $\tau \coloneqq \min(\qmin^2, 1 - \qmax^2)/2 > 0$.
\end{description}
We have shown that {\cite[Assumptions A--D]{ajanki2016ufg}} hold, hence by \cite[Corollary 1.11]{ajanki2016ufg} there exists some $\xi_n > 0$ so that
\begin{align}
    \label{eq:local:law:noise:lower}
    &
    \forall_{\delta, D > 0}
    \exists_{N_0}
    \forall_{n \geq N_0}
    \quad
    \Pr \left( |\lambda_{1}(\bbrN_{\alpha}) - (-\xi_n)| > n^{\delta} n^{-2/3} \right)
    \leq n^{-D}
    ,
    \\&
    \label{eq:local:law:noise:upper}
    \forall_{\delta, D > 0}
    \exists_{N_0}
    \forall_{n \geq N_0}
    \quad
    \Pr \left( |\lambda_{n}(\bbrN_{\alpha}) - (+\xi_n)| > n^{\delta} n^{-2/3} \right)
    \leq n^{-D}
    .
\end{align}
In particular,
$\xi_n$ is the upper-edge
(i.e., the supremum of the support)
of the probability density $\rho$
defined in \cite[Equation (1.12)]{ajanki2016ufg}
that corresponds to the quadratic vector equation \cite[Equation (1.7)]{ajanki2016ufg}.
Likewise,
\begin{align}
    \label{eq:local:law:noiseflip:lower}
    &
    \forall_{\delta, D > 0}
    \exists_{N_0}
    \forall_{n \geq N_0}
    \quad
    \Pr \left( |\lambda_{1}(\bmR \circ \bbrN_{\alpha}) - (-\xi_n)| > n^{\delta} n^{-2/3} \right)
    \leq n^{-D}
    ,
    \\&
    \label{eq:local:law:noiseflip:upper}
    \forall_{\delta, D > 0}
    \exists_{N_0}
    \forall_{n \geq N_0}
    \quad
    \Pr \left( |\lambda_{n}(\bmR \circ \bbrN_{\alpha}) - (+\xi_n)| > n^{\delta} n^{-2/3} \right)
    \leq n^{-D}
    .
\end{align}
Now, for any choice of positive integer $\mmt$, note that
\begin{align}
    \bbE \left| \|\bmR \circ \bbrN_{\alpha}\|_{\op} - \|\bbrN_{\alpha}\|_{\op} \right|^{\mmt}
    &
    =
    \bbE \left|
        \|\bmR \circ \bbrN_{\alpha}\|_{\op} - \xi_{n}
        +
        \xi_{n} - \|\bbrN_{\alpha}\|_{\op}
    \right|^{\mmt}
    \\& \nonumber
    \leq
    2^{\mmt-1}
    \left(
        \bbE \left| \|\bmR \circ \bbrN_{\alpha}\|_{\op} - \xi_{n} \right|
        +
        \bbE \left| \|\bbrN_{\alpha}\|_{\op} - \xi_{n} \right|
    \right)
    .
\end{align}
We now study each term,
beginning with $\bbE \left| \|\bbrN_{\alpha}\|_{\op} - \xi_{n} \right|$.
Let $\clE = \left\{ \left| \|\bbrN_{\alpha}\|_{\op} - \xi_{n} \right| \leq n^{-1/2} \right\}$,
and consider the expansion
\begin{equation}
    \bbE \left| \|\bbrN_{\alpha}\|_{\op} - \xi_{n} \right|^{\mmt}
    =
    \bbE \left[ \left| \|\bbrN_{\alpha}\|_{\op} - \xi_{n} \right|^{\mmt} : \clE \right]
    \Pr(\clE)
    +
    \bbE \left[ \left| \|\bbrN_{\alpha}\|_{\op} - \xi_{n} \right|^{\mmt} : \clE^{C} \right]
    \Pr(\clE^{C})
    .
\end{equation}
We will now bound each term on its own.
\begin{enumerate}
    \item For the first term, it follows immediately from the definition of $\clE$ that
    \begin{equation}
        \bbE \left[ \left| \|\bbrN_{\alpha}\|_{\op} - \xi_{n} \right|^{\mmt} : \clE \right]
        \leq n^{-\mmt/2}
        .
    \end{equation}
    \item For the second term, we use the trivial bound $\Pr(\clE) \leq 1$.
    \item For the third term, it holds surely that
    \begin{equation*}
        \|\bbrN_{\alpha}\|_{\op} \le \frac{1}{\sqrt{n}} \|\bmD_{q}^{-\alpha}\|_{\op} \|\bmX\|_{\infty} \|\bmD_{q}^{-\alpha}\|_{\op} \lesssim n^{1/2}
        ,
    \end{equation*}
    where we used submultiplicativity of the operator norm, that $\|\bmX\|_{\op} \le \|\bmX\|_{\infty}$ since $\bmX$ is symmetric, that $\bmX$ is a matrix of centered Bernoulli random variables, and that the support of $\bmq$ is bounded away from zero.
    Recall from \cite[Corollary 1.3]{ajanki2016ufg} that $|\xi_n| \leq 2$,
    so we have
    \begin{equation*}
        \left| \|\bbrN_{\alpha}\|_{\op} - \xi_{n} \right|^{\mmt}
        \leq
        2^{\mmt-1}
        \left( \|\bbrN_{\alpha}\|_{\op}^{\mmt} + |\xi_{n}|^{\mmt} \right)
        \lesssim
        2^{\mmt-1}
        (n^{\mmt/2} + 2^{\mmt})
        \lesssim
        n^{\mmt/2}
        ,
    \end{equation*}
    and so
    \begin{equation}
        \bbE \left[ \left| \|\bbrN_{\alpha}\|_{\op} - \xi_{n} \right|^{\mmt} : \clE^{C} \right]
        \lesssim n^{\mmt/2}
        .
    \end{equation}
    \item For the fourth term,
    we use the basic inequality that for all real-valued triples $(a, b, c)$,
    \begin{equation*}
        \left| \max(a,b) - c \right|
        \leq \max(|a-c|,|b-c|)
        .
    \end{equation*}
    In particular, we conclude that
    \begin{align*}
        \left| \|\bbrN_{\alpha}\|_{\op} - \xi_{n} \right|
        &
        =
        \left|
            \max\left\{
                |\lambda_1(\bbrN_{\alpha})|,
                |\lambda_n(\bbrN_{\alpha})|
            \right\}
            -
            \xi_{n}
        \right|
        \\&
        \leq
        \max\left\{
            \left|
                |\lambda_1(\bbrN_{\alpha})|
                -
                \xi_{n}
            \right|
            ,
            \left|
                |\lambda_n(\bbrN_{\alpha})|
                -
                \xi_{n}
            \right|
        \right\}
        \\&
        \leq
        \max\left\{
            \left|
                \lambda_1(\bbrN_{\alpha})
                -
                (-\xi_{n})
            \right|
            ,
            \left|
                \lambda_n(\bbrN_{\alpha})
                -
                (+\xi_{n})
            \right|
        \right\}
        .
    \end{align*}
    Thus,
    \begin{align*}
        \Pr(\clE^{C})
        &
        =
        \Pr\left( \left| \|\bbrN_{\alpha}\|_{\op} - \xi_{n} \right| > n^{-1/2} \right)
        \\&
        \leq
        \Pr\left(
            \max\left\{
                \left|
                    \lambda_1(\bbrN_{\alpha})
                    -
                    (-\xi_{n})
                \right|
                ,
                \left|
                    \lambda_n(\bbrN_{\alpha})
                    -
                    (+\xi_{n})
                \right|
            \right\}
            > n^{-1/2}
        \right)
        \\&
        =
        \Pr\left(
            \left|
                \lambda_1(\bbrN_{\alpha})
                -
                (-\xi_{n})
            \right|
            > n^{-1/2}
            \text{ or }
            \left|
                \lambda_n(\bbrN_{\alpha})
                -
                (+\xi_{n})
            \right|
            > n^{-1/2}
        \right)
        \\&
        \leq
        \Pr\left(
            \left|
                \lambda_1(\bbrN_{\alpha})
                -
                (-\xi_{n})
            \right|
            > n^{-1/2}
        \right)
        +
        \Pr\left(
            \left|
                \lambda_n(\bbrN_{\alpha})
                -
                (+\xi_{n})
            \right|
            > n^{-1/2}
        \right)
        ,
    \end{align*}
    and applying
    \cref{eq:local:law:noise:lower,eq:local:law:noise:upper}
    with $\delta = 1/6$ and $D = \mmt$
    yields that
    there exists some $N_0$
    so that for all $n \geq N_0$
    we have
    \begin{align}
        \Pr(\clE^{C})
        &
        \leq
        \Pr\left(
            \left|
                \lambda_1(\bbrN_{\alpha})
                -
                (-\xi_{n})
            \right|
            > n^{-1/2}
        \right)
        +
        \Pr\left(
            \left|
                \lambda_n(\bbrN_{\alpha})
                -
                (+\xi_{n})
            \right|
            > n^{-1/2}
        \right)
        \\& \nonumber
        =
        \Pr\left(
            \left|
                \lambda_1(\bbrN_{\alpha})
                -
                (-\xi_{n})
            \right|
            > n^{\delta}n^{-2/3}
        \right)
        +
        \Pr\left(
            \left|
                \lambda_n(\bbrN_{\alpha})
                -
                (+\xi_{n})
            \right|
            > n^{\delta}n^{-2/3}
        \right)
        \\& \nonumber
        \leq
        2 n^{-\mmt}
        .
    \end{align}
\end{enumerate}
Thus, we have that for $n$ sufficiently large
\begin{equation}
    \bbE \left| \|\bbrN_{\alpha}\|_{\op} - \xi_{n} \right|^{\mmt}
    \lesssim
    n^{-\mmt/2}
    .
\end{equation}
Repeating the same argument for $\bbE \left| \|\bmR \circ \bbrN_{\alpha}\|_{\op} - \xi_{n} \right|$
and using \cref{eq:local:law:noiseflip:lower,eq:local:law:noiseflip:upper}
yields
\begin{equation}
    \bbE \left| \|\bmR \circ \bbrN_{\alpha}\|_{\op} - \xi_{n} \right|
    \lesssim
    n^{-\mmt/2}
    .
\end{equation}
Thus,
\begin{equation*}
    \bbE \left| \|\bmR \circ \bbrN_{\alpha}\|_{\op} - \|\bbrN_{\alpha}\|_{\op} \right|^{\mmt}
    \lesssim
    n^{-\mmt/2}
    ,
\end{equation*}
and hence
\begin{equation}
    \ellP{\opnorm{\bmR \circ \bbrN_{\alpha}} - \opnorm{\bbrN_{\alpha}}}
    \lesssim
    n^{-1/2}
    ,
\end{equation}
which completes the proof of \cref{thm:DCSBM:noise:signflip:preserve}.
Note that a faster rate
can immediately be obtained by choosing $\delta$
in \cref{eq:local:law:noise:lower,eq:local:law:noise:upper,eq:local:law:noiseflip:lower,eq:local:law:noiseflip:upper}
closer to zero.
That said, choosing $\delta = 1/6$ is sufficient for our purposes.
\qed

\section{Proof of \cref{thm:DCSBM:noise:recovery}}
\label{proof:DCSBM:noise:recovery}
First, adding and subtracting both $\bmR \circ \btlL_{\alpha}$ and $\bmR \circ \btlN_{\alpha}$ and applying the triangle inequality yields
\begin{align*}
    &
    \ellP{ \opnorm{\bmR \circ \bmL_{\alpha}} - \opnorm{\btlN_{\alpha}} }\\
    &\qquad
    \leq
    \ellP{ \opnorm{\bmR \circ \bmL_{\alpha}} - \opnorm{\bmR \circ \btlL_{\alpha}} }
    + \ellP{ \opnorm{\bmR \circ \btlL_{\alpha}} - \opnorm{\bmR \circ \btlN_{\alpha}} }
    + \ellP{ \opnorm{\bmR \circ \btlN_{\alpha}} - \opnorm{\btlN_{\alpha}} }
    .
\end{align*}
It remains to bound each term.
\begin{enumerate}
    \item A bound for the first term follows from \cref{thm:DCSBM:approx:L:signflip:decay} by noting that
    \begin{align*}
        &\ellP{ \opnorm{\bmR \circ \bmL_{\alpha}\|_{\op} - \|\bmR \circ \btlL_{\alpha}} } \\
        &\qquad
        \leq
        \ellP{ \opnorm{\bmR \circ \bmL_{\alpha} - \bmR \circ \btlL_{\alpha}} }
        =
        \ellP{ \opnorm{\bmR \circ (\bmL_{\alpha} - \btlL_{\alpha})} }
        \lesssim
        \left(\frac{\log n}{n}\right)^{1/2}
        .
    \end{align*}

    \item A bound for the second term follows from \cref{thm:DCSBM:signal:signflip:destroy} by noting that
    \begin{align*}
        &\ellP{ \opnorm{\bmR \circ \btlL_{\alpha}} - \opnorm{\bmR \circ \btlN_{\alpha}} } \\
        &\qquad
        \leq
        \ellP{ \opnorm{\bmR \circ \btlL_{\alpha} - \bmR \circ \btlN_{\alpha}} }
        = \ellP{ \opnorm{\bmR \circ \btlS_{\alpha}} }
        \lesssim
        \left\| \opnorm{\bmR \circ \btlS_{\alpha}} \right\|_{\psi_1}
        \lesssim
        \left(\frac{\log n}{n}\right)^{1/2}
        .
    \end{align*}

    \item A bound for the final term is provided by \cref{thm:DCSBM:noise:signflip:preserve}, i.e.,
    \begin{equation*}
        \ellP{\opnorm{\bmR \circ \btlN_{\alpha}} - \opnorm{\btlN_{\alpha}}}
        \lesssim
        n^{-1/2}
        .
    \end{equation*}
\end{enumerate}
This completes the proof of \cref{thm:DCSBM:noise:recovery}. \qed

\section{Numerical illustration of the impact of \texorpdfstring{$\quantile$}{q} and \texorpdfstring{$T$}{T}}
\label{sec:qT:experiment}

This appendix illustrates the influence of $\quantile$ and $T$ on the performance of NetFlipPA through two numerical examples: a smaller network with $n=1000$ nodes and a larger network with $n=2000$ nodes.
Each network is generated as a degree-corrected stochastic blockmodel (DCSBM) with $K=3$ communities whose adjacency matrix $\bmA \in \{0,1\}^{n \times n}$ is generated as
\begin{equation*}
    A_{ij}
    = A_{ji}
    \overset{\textnormal{ind}}{\sim}
    \operatorname{Bernoulli}(q_{i} q_{j} C_{g_{i} g_{j}}),
    \qquad
    1 \leq i \leq j \leq n
    ,
\end{equation*}
where
$C_{g_{i} g_{j}} = 1 + \frac{M_{g_{i} g_{j}}}{\sqrt{n}}$ with $M_{g_{i} g_{j}} = -2$ for $g_{i} \neq g_{j}$ and $M_{g_{i} g_{i}} = 5$.
The node-specific degree parameters $\bmq \in (0,1)^{n}$ are generated i.i.d., where $q_{i} = 0.4$ with probability $1/2$ and $q_{i} = 0.9$ otherwise for $1 \le i \le n$.
The vector of node community memberships is set to be $\bmg = [1 \cdot \bm1_{300}, 2 \cdot \bm1_{300}, 3 \cdot \bm1_{400}]$ for the smaller network and $\bmg = [1 \cdot \bm1_{600}, 2 \cdot \bm1_{600}, 3 \cdot \bm1_{800}]$ for the larger network.
In both cases, the correct selection is two positive eigenvalues and zero negative eigenvalues.

\begin{figure*}[!htbp]
    \strut
    \hfill
    \subfloat[Smaller network with $n=1000$ nodes\label{fig:qTstudy:1000}]{%
       \includegraphics[scale=0.6]{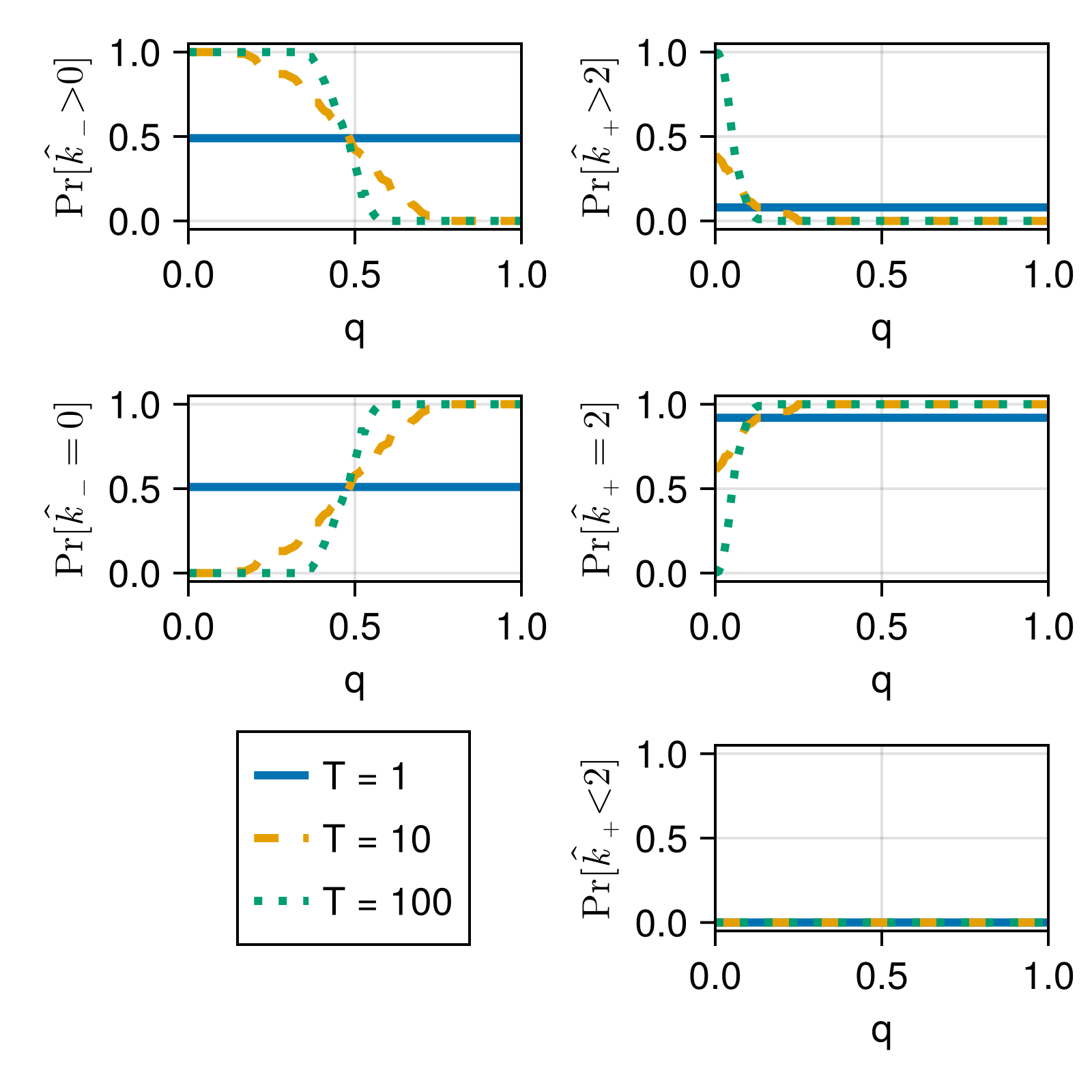}}
    \hfill
    \subfloat[Larger network with $n=2000$ nodes\label{fig:qTstudy:2000}]{%
       \includegraphics[scale=0.6]{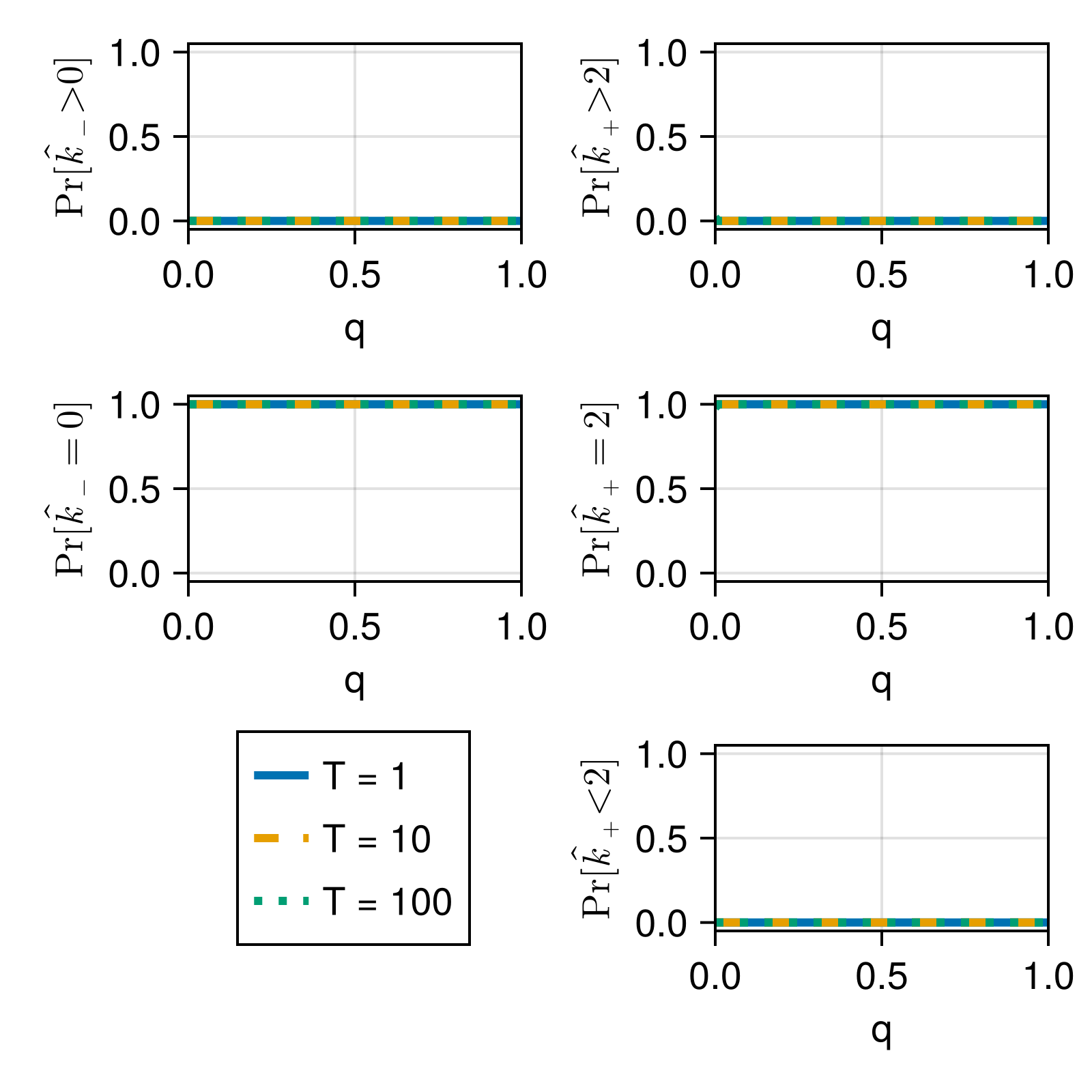}}
    \hfill
    \strut
    \caption{Performance of NetFlipPA as a function of the quantile $\quantile$ for different choices of the number of trials $T$. Each plot shows the proportion of NetFlipPA runs that produced estimates above, equal to, or below the correct selections of two positive eigenvalues and zero negative eigenvalues.}
    \label{fig:qTstudy}
\end{figure*}

For each network, \cref{fig:qTstudy} plots the performance of NetFlipPA as a function of the quantile $\quantile$ for various choices of the number of trials $T$.
Given that NetFlipPA is a randomized algorithm, we run it $100$ times on each network and plot the proportion of times the NetFlipPA selections $\hat{k}_{+}$ and $\hat{k}_{-}$ were above, equal to, and below the correct selections for the positive and negative eigenvalues, respectively.
We omit the plot for under-selection of the negative eigenvalues since the selection can never be below zero.

For the smaller network (\cref{fig:qTstudy:1000}), we note first that NetFlipPA with $T=1$ incorrectly over-selected for a constant proportion of the runs; its performance does not depend on $\quantile$ since the quantiles are all the same in that case.
For larger choices of $T$, the quantiles begin to spread out, leading to greater over-selection for small choices of $\quantile$ and vice versa for large choices of $\quantile$.
Notably, it is possible in this case to obtain perfect selection in all runs by selecting $T=10$ and $\quantile$ near one.
For the larger network (\cref{fig:qTstudy:2000}), NetFlipPA achieved correct selection in all runs, regardless of the choice of $\quantile$ and $T$.
This is illustrative of the general phenomenon that $\|\bmR \circ \btlL_{\alpha}\|_{\op}$ concentrates as the network size grows, in turn making the particular choice of quantile $\quantile$ or number of trials $T$ less influential.

Overall, in the absence of other external considerations, we recommend choosing a number of trials $T$ as large as can be computationally accommodated and choosing $\quantile$ between $0.95$ and $1$.
Large choices of $T$ help make the algorithm behave more deterministically from run to run, which is generally more necessary for smaller networks.
Meanwhile, the choice of $\quantile$ provides users the ability to incorporate domain knowledge (e.g., about what structures may be present in the network) to tune the trade-off between over- and under-selection.

\end{document}